\numberwithin{equation}{section}
\newtheorem{theorem}{Theorem}[section]
\newtheorem{thm}[theorem]{Theorem}
\newtheorem{lem}[theorem]{Lemma}
\newtheorem{rem}[theorem]{Remark}
\newtheorem{cor}[theorem]{Corollary}
\newtheorem{prop}[theorem]{Proposition}
\begin{document}

\title[On  extreme values of Nehari manifold method]{On  extreme values \\ 
of Nehari manifold method \\ via nonlinear  Rayleigh's Quotient}

\author[ Yavdat Il'yasov]
{Yavdat Il'yasov} 

\address{Yavdat Il'yasov \newline
Institute of Mathematics of RAS,
Ufa, Russia}
\email{ilyasov02@gmail.com}

\thanks{The  author was 
partly supported by grants RFBR 
13-01-00294-p-a, 14-01-00736-p-a.}
\subjclass[2000]{35J50, 35J55, 35J60}
\keywords{ Nehari manifold; Rayleigh's Quotient; nonlinear elliptic equations; fibering method }

\begin{abstract}

We study applicability conditions of the Nehari manifold method for the equation of the form 
$
D_u T(u)-\lambda D_u F(u)=0
$
in  a Banach space $W$, where $\lambda$ is a real parameter. Our study is based on the development of the theory Rayleigh's quotient for nonlinear problems. It turns out that the extreme values of parameter $\lambda$ for the Nehari manifold method can be found through the critical values of a corresponding  nonlinear  generalized Rayleigh's quotient. In the main part of the paper, we provide some general results on this relationship. Applications are given to several  types of nonlinear elliptic equations and systems of equations.

\end{abstract}

\maketitle

\section{Introduction}

The Nehari manifold method (NMM) \cite{Neh1, Neh} is among the most powerful and widely used tool in the analysis of equations. Let us briefly describe it.
Assume $W$ is a real Banach space, $\Phi_\lambda:W \to \mathbb{R}$ is a  Fr\'{e}chet-differentiable functional with derivative $D_u\Phi_\lambda$ and  $\lambda \in \mathbb{R}$. Consider the equation of variational form
\begin{equation}
\tag{$*$}
\label{*}
D_u\Phi_\lambda(u)=0,~~~u \in W. 
\end{equation}
 The Nehari manifold associated with  \eqref{*} is defined as the following subsets in $W$ 
$$
\mathcal{N}_\lambda:=\{u \in W\setminus 0:~D_u\Phi_\lambda(u)(u)=0\}.
$$
Since any solution of \eqref{*} belongs to $\mathcal{N}_\lambda$, a natural idea to solve \eqref{*}  is to consider the following constrained minimization problem
\begin{align*}
	&\Phi_\lambda(u) ~~\to ~~ min\\
	&D_u\Phi_\lambda(u)(u)=0,~~u \in W.
\end{align*}
 Suppose that there exists a local minimizer  $u$ of this problem and $\Phi_\lambda \in C^2(U,\mathbb{R})$ for some neighbourhood $U\subset W$ of $u$, then by the Lagrange multiplier rule
one has $ \mu_0 D_u\Phi_\lambda(u)+\mu_1 (D_u\Phi_\lambda(u)+D_{uu}\Phi_\lambda(u)(u, \cdot)) =0$ for some $\mu_0$, $\mu_1$ such that $|\mu_0|+|\mu_1| \neq 0$. Testing this equality by $u$ we obtain $\mu_1 D_{uu}\Phi_\lambda(u)(u,u)=0$. Hence, if
	$D_{uu}\Phi_\lambda(u)(u,u)\neq 0$,
then we have successively $\mu_1=0$, $\mu_0 \neq 0$ and therefore $D_u\Phi_\lambda(u)=0$. Thus, one has the following sufficient condition of the applicability of NMM
\begin{equation}
\tag{$**$}
\label{**}
	D_{uu}\Phi_\lambda(u)(u,u)\neq 0 ~~\mbox{for any}~~u \in \mathcal{N}_\lambda. 
\end{equation}
The feasibility of this condition often depends on parameter $\lambda$. Thus, we may suppose that there exists a set of the \textit{extreme values of  Nehari manifold method}  $\{\lambda_{min,i}, \lambda_{max,i}\}_{i=1}^\infty$ such that the sufficient condition \eqref{**} may hold  only when $\lambda \in \cup_i^\infty(\lambda_{min,i}, \lambda_{max,i})$. This brings up a question  of \textit{ how to find these extreme values}.

This question was a subject of investigations in \cite{ilst, ilIzv, IlyasovF}) where a  method (the so-called  spectral analysis by the fibering method \cite{Poh,PoS0,PoS1}) of the finding variational principles corresponding to the extreme values of NMM has been introduced. Although this method has been  applied to a number of  problems (see e.g. \cite{bobkov,  ilDiaz, ilCherf, ilEg,ilSari, ilegorov, ilrunst, IlRY,  ilTakac}), it has  certain disadvantages mainly due to its  complexity. The complexity becomes especially notable when we are dealing with systems of equations (see e.g. \cite{ilBobkov}).

The aim of the present paper is to introduce a new  approach to this problem.  In order to indicate the principal idea of the approach, let us consider equation \eqref{*} in the following particular form 
\begin{equation}
	D_u T(u)-\lambda D_uG(u)=0.
\end{equation}
For simplicity, we assume that  $D_uG(u)(u) \neq 0$ for any $u\in W \setminus 0$. Testing the equation by $u \in W$ and then solving  it with respect to $\lambda=:r(u)$ we obtain the following functional 
\begin{equation}\label{RQ}
r(u)=\frac{D_uT(u)(u)}{D_uG(u)(u)},~~~u\in W \setminus 0,
\end{equation}
which we call the nonlinear generalized Rayleigh quotient (NG-Rayleigh quotient for short). 
Note that  $u$ belongs $\mathcal{N}_\lambda$ if and only if it lies on the level set $r(u)=\lambda$. Using this fact we compute the following main identity 
\begin{equation}
	D_ur(u)(u) =
	\frac{1}{D_uG(u)(u)} D^2_{uu}\Phi_{\lambda}(u)(u,u), ~~\forall u\in \mathcal{N}_\lambda,
\end{equation}
which means, in particular, that the sufficient condition \eqref{**} holds if and only if $D_ur(u)(u)\neq 0$. Notice that  $D_ur(u)(u)=\partial r(tu)/\partial t|_{t=1}$.
These reasonings lead us to the main idea of our approach: 


\textit{The extreme values of the Nehari manifold method  can be found by means of studying  the critical values of the fibered NG-Rayleigh quotient  $\tilde{r}(t,u):=r(tu)$, $t\in \mathbb{R}^+$, $u \in W \setminus 0$ }. 

This idea  is consistent with the variational method of the finding eigenvalues of linear operators which has been introduced  in 1869 by Weber \cite{weber}  and then developed in works by Rayleigh, Fisher, Ritz, Courant (see e.g. \cite{courant,fisher,  Ritz, Rayleigh}). Indeed,  by the minimax theorem the set of eigenvalues $\sigma:=\{\lambda_1,...,\lambda_n\}$ of the $n \times n$ Hermitian matrix $A$ corresponds to the set of the critical values of Rayleigh's quotient $r(u):=\frac{\left\langle Au,u \right\rangle  }{\left\langle u,u \right\rangle }$. Furthermore, if we formally consider  
 the Nehari manifold minimization problem 
\begin{align}
\begin{cases}\label{NW}
		&\left\langle Au,u \right\rangle-\lambda \left\langle u,u \right\rangle ~~\to ~~ min \\
	&\left\langle Au,u \right\rangle-\lambda \left\langle u,u \right\rangle=0,~~u \in \mathbb{R}^n \setminus 0,
		\end{cases}
\end{align}
then, as above, to apply it we should verify  whether a priori solution of \eqref{NW} will satisfy the equation $Au-\lambda u=0$. Thereby, we need to find the corresponding  set of extreme values of NMM. The application of the above idea yields that this set coincides with  the critical values of Rayleigh's quotient $r(u)$. Thus, in the linear case of \eqref{*}, the problem of the finding extreme values of NMM is nothing else than the finding spectrum $\{\lambda_1,...,\lambda_n\}$ of $A$, i.e. this is the eigenvalues problem.

The paper is organized as follows. Section 2 contains some preliminaries on the Nehari manifold method. Note that if \eqref{*} is a system of equations there are several ways of introducing of the Nehari manifold. We discuss, in particular, the so-called  vector and scalar Nehari manifold methods. Section 3 is devoted to the nonlinear generalized Rayleigh's quotient and its main properties.  In Section 4, we introduce some basic extreme values of NMM by studying the critical values of the following functionals
$\lambda(u):=\inf_{t>0}\, r(tu)$, $\Lambda(u):=\sup_{t>0}\, r(tu)~~ \mbox{on}~~W\setminus 0$.
In Section 5, we present several applications of the method  where the extreme values of NMM can be expressed  in explicit  variational form. 
  The aim of Section 7 is to show that NG-Rayleigh's quotient can be a useful tool  itself in the analysis of equations. In particular, we prove using NG-Rayleigh's quotient a result on the existence of multiple solutions for an abstract equation and then, as a consequence, we obtain a novel result on the existence of multiple sign-constant solutions for a boundary value problem with a general convex-concave type nonlinearity and $p$-Laplacian.

\medskip

\par\noindent
{\bf Notations}

\noindent

We will denote by $W=W_1\times ...\times W_n$ the product of real Banach
spaces $W_i$ with the norms $||\cdot||_{W_i}$, $i=1,...,n$ and the norm  $||\cdot||=||\cdot||_{W_1}+...+||\cdot||_{W_n}$ in  $W$. 
 
To simplify the notation we write: 
\begin{itemize}
\item $\dot{W}=(W_1\setminus 0)\times ...\times (W_n\setminus 0)$ and  $\dot{\mathbb{R}}^+=\mathbb{R}^+\setminus 0$,
	\item $t:=(t_1,...,t_n) \in \mathbb{R}^n$,
	\item $t\cdot u:=(t_1u_1,...,t_nu_n)$, $tu:=(tu_1,...,tu_n)$, $\left\langle t,u\right\rangle=\sum_{i=1}^n t_iu_i$, for $u\in W$, $t \in \mathbb{R}^n$,
	\item $1_n=(1,...,1)^T$ and $0_n=(0,...,0)^T $ denote the vectors $1\times n$ and $0\times n$ in $\mathbb{R}^n$, respectively.
	\end{itemize}
\par\noindent
For $F \in C^1(W,\mathbb{R})$, $u \in W$, we write 
\begin{itemize}
	\item $\nabla_u F(u):=(D_{u_1}F(u),...,D_{u_n}F(u))^T$,
	\item $\nabla_u F(u)(v):=(D_{u_1}F(u)(v_1),...,D_{u_n}F(u)(v_n))^T,~~v \in W$, \\
	\item $D_u F(u)(v):= \sum_{i=1}^n D_{u_i}F(u)(v_i)$,
\end{itemize}
 where $D_{u_i}F(u)$ is the Frechet  derivative with respect to $u_i \in W_i$ and   $D_{u_i}F(u)(v_i)$ denotes the evaluation of $D_{u_i}F(u)$ at $v_i \in W_i$,  $i=1,2,...,n$. 

 \par\noindent
For a given $F: W \to  \mathbb{R}$, by the  \textit{(vector) fibered map} $\tilde{F}:(\mathbb{R}^+)^n\times W \to  \mathbb{R}$ and the\textit{ scalar fibered map} $\tilde{F}^{sc}:\mathbb{R}^+\times W \to  \mathbb{R}$ we mean the maps which are defined by $\tilde{F}(t,u):=F(t\cdot u)$ for $(t,u) \in (\mathbb{R}^+)^n\times W$ and 
 $\tilde{F}^{sc}(t,u):=F(tu)$ for $(t,u) \in \mathbb{R}^+\times W$, respectively. We write
  \begin{itemize}
	\item $\nabla_t F(t\cdot u):=(\partial_{t_1}F(t\cdot u),...,\partial_{t_n}F(t\cdot u) )^T$,
	\item $\nabla_t F(t\cdot u)(t):=(\partial_{t_1}F(t\cdot u)t_1,...,\partial_{t_n}F(t\cdot u)t_n )^T,$
	\item $\partial F(t\cdot u)/\partial t:=\left\langle \nabla_t F(t\cdot u), t\right\rangle\equiv \sum_{i=1}^n \partial_{t_i}F(t\cdot u) t_i$.
 \end{itemize}

\section{Preliminaries}

In the present paper, we shall deal with the $n$-dimenosional system of  equations of the form
\medskip
\begin{equation}\label{V1}
	\nabla_u \Phi_\lambda(u)\equiv \nabla_u T(u)-\lambda  \nabla_u G(u)=0,~~u \in \dot{W},
\end{equation}
where $\dot{W}=\Pi_{i=1}^n(W_i\setminus 0)$,  $T,G \in C^1(\dot{W}, \mathbb{R})$, $\lambda \in \mathbb{R}$ and $\Phi_\lambda(u)=T(u)-\lambda G(u)$. 
In the case  $n=1$, we call \eqref{V1} the \textit{scalar problem}. 
We define the Nehari manifold associated with \eqref{V1} as follows: 
\begin{equation}\label{NehMan}
	\mathcal{N}_\lambda=\{u\in \dot{W}:~ \nabla_u \Phi_\lambda(u)(u)\equiv \nabla_t \Phi_\lambda(t\cdot u)|_{t=1_n}=0\}.
\end{equation}
Then the corresponding Nehari manifold minimization problem is 
\begin{align}
		\begin{cases}\label{N11}
	&\Phi_\lambda(u) \to \textsl{min}\\	
	& \nabla_u \Phi_\lambda(u)(u)=0,~~u\in \dot{W}.
	\end{cases}
	\end{align}
	
We will say that $u_0 \in  \mathcal{N}_\lambda$ is a solution (or local minimizer) of \eqref{N11} if there exists $\delta>0$ such that 
$
\Phi_\lambda(u_0)\leq \Phi_\lambda(u_0)~~~\mbox{whenever}~~~||u-u_0||_W<\delta, ~~u \in  \mathcal{N}_\lambda,
$
and we denote by $\hat{\Phi}_\lambda$ the global minimization value in \eqref{N11}, i.e.  
$\hat{\Phi}_\lambda:=  \inf	\{\Phi_\lambda(u):~u \in \mathcal{N}_\lambda\}. $
A solution $u \in \dot{W}$ of \eqref{V1} is said to be ground state if  there holds
$
\Phi_\lambda(u)\leq \Phi_\lambda(w)~
$
for any solution $w \in \dot{W}$ of \eqref{V1}. Thus a global minimizer $u$ of \eqref{N11} which satisfies equation \eqref{V1} is a ground state.

From now on we make the following assumption: 

\medskip 
\noindent
 \textit{$\nabla_t \Phi_\lambda(t\cdot u)$ is a map of class $C^1$ on $(\dot{\mathbb{R}}^+)^n \times \dot{W}$, i.e.
$\nabla_t \tilde{\Phi}_\lambda \in C^1((\dot{\mathbb{R}}^+)^n \times \dot{W}, \mathbb{R}^n)$.}

\medskip 
\noindent
Notice this assumption implies that the constraints $\nabla_u \Phi_\lambda(u)(u)$ in \eqref{N11} is a manifolds of class $C^1$ on $\dot{W}$.  Obviously  any functional $\Phi_\lambda \in C^2(\dot{W}, \mathbb{R})$ satisfies to this assumption. 


Let  $u \in  \dot{W}$. Consider the Jacobian matrix of vector-valued function $\Psi(t):=\nabla_u \Phi_\lambda(t\cdot u)(t\cdot u)$, i.e. 
$$
J_t\left(\nabla_u \Phi_\lambda(t\cdot u)(t\cdot u)\right)=[\frac{\partial}{\partial t_1}\Psi(t)~...~\frac{\partial}{\partial t_n} \Psi(t)]
$$
or, component-wise:
$$
J_t\left(\nabla_u \Phi_\lambda(t\cdot u)(t\cdot u)\right)_{i,j}=\frac{\partial^2}{\partial t_i \partial t_j}\Phi_\lambda(t\cdot u)t_j+\delta_{ij}\frac{\partial }{\partial t_j }\Phi_\lambda(t\cdot u),
$$
where $\delta_{ij}=1$ if $i=j$ and $\delta_{ij}=0$ if $i\neq j$.
To shorten the notation, we write $J(\nabla_u \Phi_\lambda(u)(u)):=J_t(\nabla_u \Phi_\lambda(t\cdot u)(t\cdot u))|_{t=1_n}$. Note that  in the case $\Phi_\lambda \in C^2(\dot{W}, \mathbb{R})$ one has 
$$
J(\nabla_u \Phi_\lambda(u)(u))=\left(D_{u_i u_j}^2\Phi_\lambda( u)(u_i,u_j)+\delta_{ij} D_{u_j}\Phi_\lambda(u)(u_j)\right)_{\{1\leq i,j\leq n\}}.
$$
Furthermore, if $u \in \mathcal{N}_\lambda$, then 
\begin{equation}\label{JN}
	J(\nabla_u \Phi_\lambda(u)(u))=\left(D_{u_i u_j}^2\Phi_\lambda( u)(u_i,u_j)\right)_{\{1\leq i,j\leq n\}}.
\end{equation}
Let us prove
\begin{lem}\label{lem1}
Assume $\Phi_\lambda \in  C^1(\dot{W}, \mathbb{R})$, $\nabla_t \tilde{\Phi}_\lambda \in C^1((\dot{\mathbb{R}}^+)^n \times \dot{W}, \mathbb{R}^n)$.  Suppose that there exists a solution  $u_0$ of problem \eqref{N11} such that  
\begin{equation} \label{Cond}
	\det	J(\nabla_u \Phi_\lambda(u_0)(u_0))\neq 0.
\end{equation}
 Then $u_0$ satisfies  equation \eqref{V1}.
\end{lem}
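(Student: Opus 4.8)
The plan is to argue by the Lagrange multiplier rule applied to the constrained minimization problem \eqref{N11}. Since $u_0$ is a local minimizer of $\Phi_\lambda$ subject to the $n$ constraints $g_i(u):=D_{u_i}\Phi_\lambda(u)(u_i)=0$, $i=1,\dots,n$ (these are of class $C^1$ by the standing assumption on $\nabla_t\tilde\Phi_\lambda$), there exist multipliers $\mu_0\in\mathbb{R}$ and $\mu=(\mu_1,\dots,\mu_n)\in\mathbb{R}^n$, not all zero, such that
\begin{equation*}
\mu_0\, \nabla_u\Phi_\lambda(u_0) + \sum_{j=1}^n \mu_j\, \nabla_u g_j(u_0) = 0 \quad \text{in } W^*.
\end{equation*}
The first thing I would do is spell out $\nabla_u g_j(u_0)$: componentwise, $D_{u_i}g_j(u_0) = D^2_{u_iu_j}\Phi_\lambda(u_0)(\cdot,u_j) + \delta_{ij}D_{u_j}\Phi_\lambda(u_0)$, so that testing the $i$-th component of the Euler--Lagrange identity against $u_i$ and summing appropriately recovers exactly the entries of the Jacobian matrix $J(\nabla_u\Phi_\lambda(u_0)(u_0))$ described in \eqref{JN}.

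The key step is to test the multiplier identity against $u_0$ in a component-adapted way. Concretely, evaluate the $i$-th component of $\mu_0\nabla_u\Phi_\lambda(u_0)+\sum_j\mu_j\nabla_u g_j(u_0)=0$ at $u_{0,i}$. Because $u_0\in\mathcal{N}_\lambda$, the terms $\mu_0 D_{u_i}\Phi_\lambda(u_0)(u_{0,i})$ sum to zero, and more precisely the Nehari condition kills the $\mu_0$-contribution in each component, leaving
\begin{equation*}
\sum_{j=1}^n \mu_j\, \big(D^2_{u_iu_j}\Phi_\lambda(u_0)(u_{0,i},u_{0,j})\big) = 0, \qquad i=1,\dots,n,
\end{equation*}
which is precisely $J(\nabla_u\Phi_\lambda(u_0)(u_0))\,\mu = 0$ using \eqref{JN}. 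The hypothesis $\det J(\nabla_u\Phi_\lambda(u_0)(u_0))\neq 0$ then forces $\mu=0_n$. Consequently $\mu_0\neq 0$, and the multiplier identity collapses to $\mu_0\nabla_u\Phi_\lambda(u_0)=0$, i.e. $\nabla_u\Phi_\lambda(u_0)=0$, which is exactly \eqref{V1}.

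The main obstacle I anticipate is the careful bookkeeping at the testing step: one must be sure that pairing the $i$-th component of the Euler--Lagrange equation with $u_{0,i}$ (rather than with a full vector) is legitimate, and that the contribution $\mu_0 D_{u_i}\Phi_\lambda(u_0)(u_{0,i})$ genuinely vanishes for each $i$ and not merely in sum. In fact $\sum_i D_{u_i}\Phi_\lambda(u_0)(u_{0,i})=0$ is the Nehari condition, whereas here we also use that the ``diagonal'' term $\delta_{ij}D_{u_j}\Phi_\lambda(u_0)(u_{0,j})$ appearing in $J_t(\cdots)|_{t=1_n}$ is absorbed correctly so that only the pure Hessian block $\big(D^2_{u_iu_j}\Phi_\lambda(u_0)(u_{0,i},u_{0,j})\big)_{ij}$ survives in \eqref{JN} — this is exactly the reduction already recorded before the lemma, so invoking it is the clean way to proceed. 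A secondary, more routine point is the validity of the Lagrange multiplier rule in the Banach-space setting, which is standard once the constraint map is $C^1$; no surjectivity/regularity of the constraints is needed for the Fritz-John form used here, since the determinant condition is precisely what upgrades the Fritz-John multipliers to genuine Lagrange multipliers with $\mu_0\neq0$.
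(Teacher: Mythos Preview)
Your argument is correct and is essentially identical to the paper's own proof: both invoke the Fritz--John multiplier rule, test the $i$-th component against $u_{0,i}$, and use the (vector) Nehari condition together with \eqref{JN} to reduce to $J(\nabla_u\Phi_\lambda(u_0)(u_0))\bar\mu=0$, whence $\bar\mu=0$ and $\mu_0\neq0$. One small clarification regarding your ``obstacle'' paragraph: for the \emph{vector} Nehari manifold \eqref{NehMan} the condition is already componentwise, i.e.\ $D_{u_i}\Phi_\lambda(u_0)(u_{0,i})=0$ for each $i$ (not merely their sum), so both the $\mu_0$-term and the diagonal $\delta_{ij}$-term vanish individually without any further work.
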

\begin{proof}  To simplify notation, we prove under the assumption $\Phi_\lambda \in C^2(\dot{W}, \mathbb{R})$. By Fritz John \cite{john} conditions there exist the Lagrange multipliers $\mu_0$, $\mu_1,...,\mu_n$ such that $\sum_{i=0}^n|\mu_i|\neq 0$ and 
$$
\mu_0 D_{u_i}\Phi_\lambda(u_0)+\sum_{j=1}^{n}\mu_j(D_{u_iu_j}^2\Phi_\lambda(u_0)(u_{0,j}, \cdot)+\delta_{ij}D_{u_i}\Phi_\lambda(u_0)) =0,~~i=1,...,n.
$$
Test these equations by $u_{0,i}$, $i=1,...,n$, respectively. Then since  $u_0 \in \mathcal{N}_\lambda$, we obtain 
$ J_t(\nabla_u \Phi_\lambda(u)(u))\bar{\mu}=0$, where  $\bar{\mu}:=(\mu_1,...,\mu_n)^T$.
However, in view of \eqref{Cond}, this is possible only if $\mu_1=0,...,\mu_n=0$. Hence, $\mu_0 \neq 0$ and we obtain the required. 
\end{proof}

Let us mention that the Nehari manifold \eqref{NehMan} is actually introduced   by means of the vector fibered map $\tilde{\Phi}_\lambda(t,u):=\Phi_\lambda(t\cdot u)$,  $(t,u) \in (\mathbb{R}^+)^n\times W$. However, there is another approach which is based on the scalar fibered map $\tilde{\Phi}^{sc}_\lambda(t,u):=\Phi_\lambda(tu)$, $(t,u) \in \mathbb{R}^+\times W$. Indeed, let us introduce  the \textit{scalar Nehari manifold}
\begin{equation}\label{NehSc}
\mathcal{N}^{sc}_\lambda=\{u\in W\setminus 0_n:~ \partial_t \Phi_\lambda(tu)|_{t=1}\equiv D_{u}\Phi_\lambda(u)(u)\equiv \sum_{i=1}^n D_{u_i}\Phi_\lambda(u)(u_i)=0\}.
\end{equation}
Then the corresponding \textit{scalar Nehari manifold minimization problem } is defined as follows
\begin{align}
		\begin{cases}\label{N11KW}
	&\Phi_\lambda(u) \to min\\	
	& D_{u}\Phi_\lambda(u)(u)\equiv \sum_{i=1}^n D_{u_i}\Phi_\lambda(u)(u_i)=0,~~u\in W\setminus 0_n.
	\end{cases}
	\end{align}
Arguing as above, we have 
\begin{lem}\label{lem1KW}
Assume $\Phi_\lambda \in  C^1(W\setminus 0_n, \mathbb{R})$, $\partial_t \tilde{\Phi}_\lambda \in C^1(\dot{\mathbb{R}}^+\times (W\setminus 0_n), \mathbb{R})$.  Suppose that there exists a solution  $u$ of problem \eqref{N11KW} such that  
\begin{equation} \label{CondKW}
	\frac{\partial}{\partial t  }D_{u}\Phi_\lambda(tu)(tu)|_{t=1}\neq 0.
\end{equation}
 Then $u$ satisfies  equation \eqref{V1}.
\end{lem}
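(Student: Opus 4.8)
The plan is to repeat the argument of Lemma~\ref{lem1}, but now with a single scalar constraint in place of the $n$ vector constraints, so for notational convenience I would again reduce to the case $\Phi_\lambda\in C^2(W\setminus 0_n,\mathbb{R})$. Writing $g(u):=D_u\Phi_\lambda(u)(u)\equiv\partial_t\Phi_\lambda(tu)|_{t=1}$, the problem \eqref{N11KW} is the minimization of $\Phi_\lambda$ subject to the single equality constraint $g(u)=0$, which by the assumption $\partial_t\tilde\Phi_\lambda\in C^1(\dot{\mathbb{R}}^+\times(W\setminus 0_n),\mathbb{R})$ defines a $C^1$ manifold near $u$. Hence the Fritz John \cite{john} conditions apply and yield multipliers $\mu_0,\mu_1$ with $|\mu_0|+|\mu_1|\neq0$ such that
\[
\mu_0\,D_u\Phi_\lambda(u)+\mu_1\,D_u g(u)=0 .
\]
Since $D_u g(u)(v)=D_{uu}^2\Phi_\lambda(u)(v,u)+D_u\Phi_\lambda(u)(v)$ for every $v\in W$, this reads
\[
\mu_0\,D_u\Phi_\lambda(u)(v)+\mu_1\big(D_{uu}^2\Phi_\lambda(u)(v,u)+D_u\Phi_\lambda(u)(v)\big)=0,\qquad v\in W .
\]

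Next I would test this identity with $v=u$. Because $u\in\mathcal{N}^{sc}_\lambda$ we have $D_u\Phi_\lambda(u)(u)=0$, so every term containing $D_u\Phi_\lambda(u)(u)$ drops out and we are left with $\mu_1\,D_{uu}^2\Phi_\lambda(u)(u,u)=0$. The third step is to recognize the quantity appearing in hypothesis \eqref{CondKW} as precisely $D_{uu}^2\Phi_\lambda(u)(u,u)$. Setting $\psi(t):=\Phi_\lambda(tu)$ one has $\psi'(t)=D_u\Phi_\lambda(tu)(u)$, hence $D_u\Phi_\lambda(tu)(tu)=t\,\psi'(t)$; differentiating in $t$ and evaluating at $t=1$ gives
\[
\frac{\partial}{\partial t}D_u\Phi_\lambda(tu)(tu)\Big|_{t=1}=\psi'(1)+\psi''(1)=D_u\Phi_\lambda(u)(u)+D_{uu}^2\Phi_\lambda(u)(u,u),
\]
and the first summand vanishes since $u\in\mathcal{N}^{sc}_\lambda$.

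Consequently \eqref{CondKW} asserts $D_{uu}^2\Phi_\lambda(u)(u,u)\neq0$, which together with $\mu_1 D_{uu}^2\Phi_\lambda(u)(u,u)=0$ forces $\mu_1=0$; then $\mu_0\neq0$, so $D_u\Phi_\lambda(u)=0$, and testing against $v=(0,\dots,v_i,\dots,0)$ yields $D_{u_i}\Phi_\lambda(u)=0$ for each $i$, i.e.\ $u$ satisfies \eqref{V1}. I do not anticipate a genuine obstacle here: the whole argument is the scalar shadow of Lemma~\ref{lem1}. The only point requiring a little care is the bookkeeping in the third step, namely the passage between the scalar fibered derivative $\frac{\partial}{\partial t}D_u\Phi_\lambda(tu)(tu)|_{t=1}$ and the second Gateaux differential $D_{uu}^2\Phi_\lambda(u)(u,u)$, together with checking that $\partial_t\tilde\Phi_\lambda\in C^1$ indeed legitimizes invoking the Fritz John rule for a $C^1$ constraint (as noted after \eqref{N11}); in the general $C^1$ setting one would replace the Hessian computations by difference quotients of $\partial_t\tilde\Phi_\lambda$, exactly as indicated in the proof of Lemma~\ref{lem1}.
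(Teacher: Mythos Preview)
Your argument is correct and is precisely the scalar version of the Fritz John multiplier proof the paper gives for Lemma~\ref{lem1}; the paper itself merely writes ``Arguing as above'' and remarks that Lemma~\ref{lem1KW} follows from Lemma~\ref{lem1} by regarding \eqref{V1} as a scalar equation. Your identification of \eqref{CondKW} with $D_{uu}^2\Phi_\lambda(u)(u,u)$ via $u\in\mathcal{N}^{sc}_\lambda$ matches the paper's observation after the lemma, so there is no meaningful difference in approach.
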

Note that in the case $\Phi_\lambda \in C^2(W\setminus 0_n, \mathbb{R})$ the condition \eqref{CondKW} may be written as  
$$
D^2_{uu}\Phi_\lambda(u)(u,u)=\sum_{i,j=1}^n ( D^2_{u_iu_j}\Phi_\lambda(u)(u_i,u_j)+\delta_{ij}D_{u_i}\Phi_\lambda(u)(u))\neq 0.
$$
Let us remark that the assertion of Lemma \ref{lem1KW} follows directly from Lemma \ref{lem1} if we formally regard the system of equation \eqref{V1}  as a scalar equation.  

In what follows, we call \eqref{N11} and \eqref{N11KW} the vector and scalar Nehari manifold method (NMM), respectively. Nevertheless,  in most cases, we call \eqref{N11} simply Nehari manifold method when no confusion can appears.

\begin{rem}\label{VSc}
	In the sequel, it will cause no confusion if we consider the method \eqref{N11KW} as a particular case of the general approach \eqref{N11}.   Moreover in the future mainly our reasoning (unless otherwise stated) will be carried out on problem \eqref{N11}, meaning that they are also valid for \eqref{N11KW} as for the special case of \eqref{N11}.
\end{rem}

\begin{rem}\label{RScV}
	It is worth noticing that $\nabla_u \Phi_\lambda(u)(u)=0$ and $\det	J(\nabla_u \Phi_\lambda(u)(u))=0$ for any $u \in W\setminus \dot{W}$, whereas $D_{u}\Phi_\lambda(u)(u)=0$ and $\frac{\partial^2}{\partial t^2  }\Phi_\lambda(t u)|_{t=1}=0$ if $u=0_n$. This is why in definitions \eqref{NehMan}, \eqref{NehSc}, we are setting	$\mathcal{N}_\lambda\subset \dot{W}$, whereas $\mathcal{N}^{sc}_\lambda\subset  W\setminus 0_n$.  
\end{rem}

In the literature, the Nehari manifold minimization problem  is sometimes considered in the following  form (see e.g. \cite{DrabPoh, ilIzv, Poh, PoS0, PoS1})
\begin{align}
		\begin{cases}\label{N11v}
	&\Phi_\lambda(t\cdot v) \to min\\	
	& \nabla_t\Phi_\lambda(t\cdot v)=0_n\\
	& v \in S, ~t_i>0,~~i=1,2,...,n,
	\end{cases}
	\end{align}
where $S:=\{v \in W:~||v||_W=1\}$ and the Nehari manifold is defined as follows
\begin{equation}\label{NehMan2}
	\tilde{\mathcal{N}}_\lambda=\{(t,v)\in  (\dot{\mathbb{R}}^+)^n\times S:~\nabla_t\Phi_\lambda(t\cdot v)=0_n
	\}.
\end{equation}
It is readily seen that if $u$ is a solution of problem \eqref{N11} such that  
\eqref{Cond} is satisfied, then $(t,v)$, where $t=1_n ||u||$, $v=u/||u||$, is also a solution of \eqref{N11v} such that 
\begin{equation}\label{Cond1}
\det(J_t(\nabla_t\Phi_\lambda(t \cdot v)))\neq 0.	
\end{equation}
The converse is also true.

\begin{prop}\label{prop2}
Assume  $\Phi_\lambda \in  C^1(\dot{W}, \mathbb{R})$, $\nabla_t \tilde{\Phi}_\lambda \in C^1((\dot{\mathbb{R}}^+)^n \times \dot{W}, \mathbb{R}^n)$. Suppose $(t_0,v_0) \in \tilde{\mathcal{N}}_\lambda$ such that \eqref{Cond1} is satisfied. Then  there
exists a neighbourhood $U(v_0)$ of $v_0$ in $S$ and a unique local $C^1$-map
$
t:\:U(v_0) \to \mathbb{R}^n
$
such that $(t(v),v) \in \tilde{\mathcal{N}}_\lambda$ for all $v \in
U(v_0)$ and $t(v_0)=t_0$. 
\end{prop}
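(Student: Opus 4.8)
The statement is essentially an application of the implicit function theorem, so the plan is to package the condition $\nabla_t\Phi_\lambda(t\cdot v)=0_n$ as the zero set of an explicit $C^1$ map and verify that the relevant partial Jacobian is invertible at $(t_0,v_0)$. First I would define
$$
\Xi\colon (\dot{\mathbb{R}}^+)^n\times S \to \mathbb{R}^n,\qquad \Xi(t,v):=\nabla_t\Phi_\lambda(t\cdot v)=\nabla_t\tilde{\Phi}_\lambda(t,v).
$$
By the standing assumption $\nabla_t\tilde\Phi_\lambda\in C^1((\dot{\mathbb{R}}^+)^n\times\dot W,\mathbb{R}^n)$, and since $S\subset W$ is a $C^1$ (in fact $C^\infty$, if the norm is smooth away from $0$ — but we only need the Banach submanifold structure of the unit sphere, or alternatively one can work with the chart $v\mapsto v$ restricted to a coordinate neighbourhood of $v_0$ in $S$), the restriction $\Xi$ is of class $C^1$ near $(t_0,v_0)$. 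By hypothesis $\Xi(t_0,v_0)=0_n$.

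Next I would compute the partial derivative of $\Xi$ with respect to the $t$-variable at $(t_0,v_0)$: this is exactly the Jacobian matrix $J_t(\nabla_t\Phi_\lambda(t\cdot v_0))$ evaluated at $t=t_0$, whose $(i,j)$ entry is $\partial^2_{t_it_j}\Phi_\lambda(t\cdot v_0)\big|_{t=t_0}$. Condition \eqref{Cond1} says precisely that this $n\times n$ matrix is nonsingular, hence $D_t\Xi(t_0,v_0)\colon\mathbb{R}^n\to\mathbb{R}^n$ is a linear isomorphism. The Banach-space implicit function theorem then yields a neighbourhood $U(v_0)$ of $v_0$ in $S$, a neighbourhood of $t_0$ in $(\dot{\mathbb{R}}^+)^n$, and a unique $C^1$ map $t\colon U(v_0)\to\mathbb{R}^n$ with $t(v_0)=t_0$ and $\Xi(t(v),v)=0_n$ for all $v\in U(v_0)$; shrinking $U(v_0)$ if necessary we keep $t(v)$ in $(\dot{\mathbb{R}}^+)^n$, so $(t(v),v)\in\tilde{\mathcal{N}}_\lambda$. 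The uniqueness clause of the implicit function theorem gives the asserted local uniqueness of $t(\cdot)$.

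The only genuinely delicate point is the differentiable structure on the unit sphere $S$: in a general Banach space the norm need not be differentiable, so strictly speaking one should either (i) restrict attention to a fixed complement — write $v=v_0+h$ with $h$ in a closed hyperplane transverse to $\mathbb{R}v_0$ and reparametrize, using that $v\mapsto v/\|v\|$ is a $C^1$ retraction near $v_0$ onto $S$ when... — or, more robustly, (ii) observe that $\tilde{\mathcal{N}}_\lambda$ can be described without $S$ at all, since $\Phi_\lambda$ is not assumed homogeneous one does need the normalization, so the cleanest route is to apply the implicit function theorem on the open set $(\dot{\mathbb{R}}^+)^n\times \dot W$ first, obtaining $t=t(v)$ for $v$ near $v_0$ in $W$, and then simply restrict the resulting $C^1$ map to $U(v_0)\subset S$. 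This last variant sidesteps any smoothness issue for the sphere entirely and is the approach I would write up; the restriction of a $C^1$ map on an open subset of $W$ to a relatively open subset of $S$ is automatically $C^1$ in the induced sense. With that observation the remaining steps are routine bookkeeping.
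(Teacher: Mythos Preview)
Your proposal is correct and follows exactly the same approach as the paper: define the map $\psi(t,v)=\nabla_t\Phi_\lambda(t\cdot v)$, note that the Jacobian in $t$ at $(t_0,v_0)$ is nonsingular by \eqref{Cond1}, and invoke the Implicit Function Theorem. The paper's proof is two sentences and does not even mention the smoothness-of-the-sphere issue you discuss; your suggested workaround of applying the implicit function theorem on $(\dot{\mathbb{R}}^+)^n\times\dot W$ and then restricting to $S$ is a clean way to handle that point, and is more careful than what the paper writes.
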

\begin{proof}
By the assumption the determinant of the Jacobian  of $\Psi(t):=\nabla_t \Phi_\lambda(t\cdot v_0)$ at  the point $t=t_0$  is nonzero. Thus we may apply the Implicit
Function Theorem to the functional $\psi(t,v):=\nabla_t \Phi_\lambda(t\cdot v)$ and the proof  follows.
\end{proof}
From this we are able to prove the following analogue of Lemma \ref{lem1}
\begin{lem}\label{lem1f}
	Assume  $\Phi_\lambda \in  C^1(\dot{W}, \mathbb{R})$, $\nabla_t \tilde{\Phi}_\lambda \in C^1((\dot{\mathbb{R}}^+)^n \times \dot{W}, \mathbb{R}^n)$.  Suppose that there exists a solution  $(t_0,v_0)$ of problem \eqref{N11v} such that \eqref{Cond1} holds.
Then $u_0=t_0\cdot v_0$ satisfies  equation \eqref{V1}.
\end{lem}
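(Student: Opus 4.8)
The plan is to reduce the statement to Lemma \ref{lem1}: I would show that $u_0:=t_0\cdot v_0$ is a local minimizer of the Nehari manifold problem \eqref{N11} and that the nondegeneracy hypothesis \eqref{Cond1} at $(t_0,v_0)$ is equivalent to the nondegeneracy hypothesis \eqref{Cond} at $u_0$; then Lemma \ref{lem1} yields that $u_0$ satisfies \eqref{V1}. As in the proof of Lemma \ref{lem1}, I would first argue under the simplifying assumption $\Phi_\lambda\in C^2(\dot W,\mathbb{R})$, the general case being analogous through the standing $C^1$ hypothesis on $\nabla_t\tilde{\Phi}_\lambda$.

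For the equivalence of \eqref{Cond1} and \eqref{Cond}, first note that $\nabla_t\Phi_\lambda(t\cdot v_0)|_{t=t_0}=0$ gives, after testing by $v_{0,i}$, that $D_{u_i}\Phi_\lambda(u_0)(u_{0,i})=t_{0,i}D_{u_i}\Phi_\lambda(u_0)(v_{0,i})=0$, so $u_0\in\mathcal{N}_\lambda$. A chain-rule computation gives $\frac{\partial^2}{\partial t_i\partial t_j}\Phi_\lambda(t\cdot v_0)\big|_{t=t_0}=D_{u_iu_j}^2\Phi_\lambda(u_0)(v_{0,i},v_{0,j})$, whereas by \eqref{JN} the $(i,j)$ entry of $J(\nabla_u\Phi_\lambda(u_0)(u_0))$ equals $D_{u_iu_j}^2\Phi_\lambda(u_0)(u_{0,i},u_{0,j})=t_{0,i}t_{0,j}D_{u_iu_j}^2\Phi_\lambda(u_0)(v_{0,i},v_{0,j})$. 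Hence, by bilinearity, $\big(J(\nabla_u\Phi_\lambda(u_0)(u_0))\big)_{ij}=t_{0,i}t_{0,j}\big(J_t(\nabla_t\Phi_\lambda(t\cdot v_0))|_{t=t_0}\big)_{ij}$, so that
\begin{equation*}
\det J(\nabla_u\Phi_\lambda(u_0)(u_0))=\Big(\prod_{i=1}^n t_{0,i}\Big)^2\det\big(J_t(\nabla_t\Phi_\lambda(t\cdot v_0))|_{t=t_0}\big),
\end{equation*}
and since $t_{0,i}>0$ the two conditions are equivalent.

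To see that $u_0$ is a local minimizer of \eqref{N11}, I would invoke Proposition \ref{prop2}: it supplies a neighbourhood $U(v_0)\subset S$ and a $C^1$-map $t:U(v_0)\to\mathbb{R}^n$ with $t(v_0)=t_0$ such that, by the implicit function theorem behind it, $\tilde{\mathcal{N}}_\lambda$ agrees with the graph $\{(t(v),v):v\in U(v_0)\}$ near $(t_0,v_0)$. The key point is that every $u\in\mathcal{N}_\lambda$ close to $u_0$ has the form $t(v)\cdot v$: putting $\rho(u):=\sum_{i=1}^n||u_i||_{W_i}/t_{0,i}$, $\tau_i(u):=\rho(u)\,t_{0,i}$ and $v_i(u):=u_i/\tau_i(u)$, one gets $v(u)\in S$ (since $\sum_i||v_i(u)||_{W_i}=1$) with $(\tau(u),v(u))\to(t_0,v_0)$ as $u\to u_0$ (note $\rho(u_0)=1$), and $\partial_{t_i}\Phi_\lambda(\tau(u)\cdot v(u))=\tau_i(u)^{-1}D_{u_i}\Phi_\lambda(u)(u_i)=0$ because $u\in\mathcal{N}_\lambda$, so $(\tau(u),v(u))\in\tilde{\mathcal{N}}_\lambda$; for $u$ near $u_0$ this point lies in the region where $\tilde{\mathcal{N}}_\lambda$ is the above graph, forcing $\tau(u)=t(v(u))$ and hence $u=\tau(u)\cdot v(u)=t(v(u))\cdot v(u)$. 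Therefore $\Phi_\lambda(u)=\Phi_\lambda(t(v(u))\cdot v(u))\geq\Phi_\lambda(t_0\cdot v_0)=\Phi_\lambda(u_0)$ by the minimality of $(t_0,v_0)$ in \eqref{N11v}, which is exactly the assertion that $u_0$ is a local minimizer of \eqref{N11}. An application of Lemma \ref{lem1} then completes the argument.

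The step I expect to be the main obstacle is this last reduction: building, for each $u\in\mathcal{N}_\lambda$ near $u_0$, a decomposition $u=\tau\cdot v$ with $v$ on the unit sphere $S$ that depends continuously on $u$ and stays inside the chart $U(v_0)$. Since the norm on $W=W_1\times\dots\times W_n$ is merely $||\cdot||_{W_1}+\dots+||\cdot||_{W_n}$ and need not be differentiable, one cannot obtain such a decomposition from an implicit-function argument, and it is the explicit uniform rescaling $\tau_i(u)=\rho(u)\,t_{0,i}$ that makes the passage between \eqref{N11v} and \eqref{N11} work. Once this is in place, the congruence identity of the second step and the appeal to Lemma \ref{lem1} are routine.
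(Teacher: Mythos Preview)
Your argument is correct, but it proceeds differently from the paper. The paper does not reduce to Lemma~\ref{lem1}. Instead, after invoking Proposition~\ref{prop2} to obtain the local $C^1$-map $v\mapsto t(v)$, it observes that the reduced functional $v\mapsto\Phi_\lambda(t(v)\cdot v)$ has a local minimum on $S$ at $v_0$ and applies the Fritz John conditions directly on the sphere: one gets $\mu_0\nabla_v\Phi_\lambda(t_0\cdot v_0)+\mu_1\nabla_v\|v_0\|=0$, and testing by $v_0$ together with $\nabla_t\Phi_\lambda(t_0\cdot v_0)=0_n$ forces $\mu_1=0$, hence $\nabla_v\Phi_\lambda(t_0\cdot v_0)=0$. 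Your route, by contrast, establishes the two ingredients the paper mentions informally just before Proposition~\ref{prop2}: the congruence identity $\det J(\nabla_u\Phi_\lambda(u_0)(u_0))=\big(\prod_i t_{0,i}\big)^2\det J_t(\nabla_t\Phi_\lambda(t\cdot v_0))|_{t=t_0}$ giving \eqref{Cond}$\Leftrightarrow$\eqref{Cond1}, and the fact that a local minimizer of \eqref{N11v} corresponds to a local minimizer of \eqref{N11}, via your explicit rescaling $\tau_i(u)=\rho(u)\,t_{0,i}$, $v_i(u)=u_i/\tau_i(u)$. The paper's argument is shorter but tacitly uses differentiability of $\|\cdot\|$ on $S$ (needed for the Lagrange multiplier step), whereas your construction sidesteps that issue entirely and, by making the passage between the two formulations explicit, yields a slightly cleaner reduction to the already-proved Lemma~\ref{lem1}.
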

\begin{proof}
Since \eqref{Cond1} holds, we may apply Proposition \ref{prop2}. Thus  there
exists a neighbourhood $U(v_0) \subset S$ and a unique local $C^1$-map $t:\:U(v_0) \to \mathbb{R}^n$ such that $(t(v),v) \in \tilde{\mathcal{N}}_\lambda$ for all $v \in
U(v_0)$ and $t(v_0)=t_0$. This implies that the function $\Phi_\lambda(t(v)v)$ constrained on $S$ attains a local minimum at point $v_0 \in U(v_0)$. Consequently, by Fritz John conditions there exist the Lagrange multipliers $\mu_0$, $\mu_1$ such that $|\mu_0|+|\mu_1|\neq 0$ and 
$$
\mu_0 \nabla_v \Phi_\lambda(t(v_0)\cdot v_0)+\mu_1 \nabla_v ||v_0||=0.
$$
Now testing this system of equations by $v_0$ we obtain as above in the proof of Lemma \ref{lem1} that
$\mu_0 D_v \Phi_\lambda(t_0\cdot v_0)(v_0)+\mu_1 =0$. Since $\nabla_t\Phi_\lambda(t_0\cdot v_0)=0_n$ we have $ D_v \Phi_\lambda(t_0\cdot v_0)(v_0)=0$ and consequently $\mu_1=0$. This concludes the proof.
\end{proof}

It is important to note that the definition of the  Nehari manifold \eqref{NehMan} \rm{(\eqref{NehMan2})} and condition  \eqref{Cond} \rm{(\eqref{Cond1})}  are invariant in the following sense

\begin{prop}\label{propInvar}
 Let $\psi:(\dot{\mathbb{R}}^+)^n \to(\dot{\mathbb{R}}^+)^n$ be $C^1$-map such that $\psi(1_n)=1_n$, $det(J(\psi(s))|_{s=1_n}\neq 0$. Then   $\nabla_s \Phi_\lambda(\psi(s)\cdot u)|_{s=1_n}=0$ if and only if  $\nabla_t \Phi_\lambda(t\cdot u)|_{t=1_n}=0$, and 
  $\det J_t(\nabla_s \Phi_\lambda(\psi(s)\cdot u)(\psi(s)\cdot u))|_{s=1_n} \neq 0$ if and only if  $det J(\nabla_u \Phi_\lambda(u)(u))\neq 0$. 
\end{prop}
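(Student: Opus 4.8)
The plan is to derive both equivalences from the multivariable chain rule, exploiting that $\psi$ is a local $C^1$-diffeomorphism near $1_n$ with $\psi(1_n)=1_n$. It is convenient to write $h(t):=\nabla_t\Phi_\lambda(t\cdot u)=\big(D_{u_i}\Phi_\lambda(t\cdot u)(u_i)\big)_{i=1}^n$ for the fibered gradient, so that $\nabla_u\Phi_\lambda(u)(u)=h(1_n)$, and to set $\Psi(t):=\nabla_u\Phi_\lambda(t\cdot u)(t\cdot u)=\big(t_i\,h_i(t)\big)_{i=1}^n$, so that $J(\nabla_u\Phi_\lambda(u)(u))=J_t\Psi(t)\big|_{t=1_n}$ as defined in Section~2. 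The two reparametrized objects appearing in the statement, $\nabla_s\Phi_\lambda(\psi(s)\cdot u)$ and $\nabla_s\Phi_\lambda(\psi(s)\cdot u)(\psi(s)\cdot u)$, will then be expressed through $h$, $\Psi$ and the $n\times n$ Jacobian $J_s\psi$.

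\emph{First equivalence.} Differentiating $s\mapsto\Phi_\lambda(\psi(s)\cdot u)$ componentwise and recalling that $\psi$ acts only on the scaling factor, the chain rule gives
\[
\nabla_s\Phi_\lambda(\psi(s)\cdot u)=\big[J_s\psi(s)\big]^{T}\,h(\psi(s)).
\]
Evaluating at $s=1_n$ and using $\psi(1_n)=1_n$ yields $\nabla_s\Phi_\lambda(\psi(s)\cdot u)\big|_{s=1_n}=\big[J_s\psi(1_n)\big]^{T}h(1_n)$. Since $\det J_s\psi(1_n)\neq0$, the matrix $\big[J_s\psi(1_n)\big]^{T}$ is invertible, hence the left-hand side vanishes if and only if $h(1_n)=\nabla_t\Phi_\lambda(t\cdot u)\big|_{t=1_n}$ does, which is the asserted equivalence.

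\emph{Second equivalence.} Componentwise one checks that the $i$-th entry of $\nabla_s\Phi_\lambda(\psi(s)\cdot u)(\psi(s)\cdot u)$ equals $\psi_i(s)\,h_i(\psi(s))$, i.e. this map is precisely $\Psi\circ\psi$. By the chain rule for Jacobians, $J_s(\Psi\circ\psi)(s)=\big(J_t\Psi\big)(\psi(s))\cdot J_s\psi(s)$, and at $s=1_n$ this equals $J(\nabla_u\Phi_\lambda(u)(u))\cdot J_s\psi(1_n)$. Taking determinants,
\[
\det J_s(\Psi\circ\psi)(1_n)=\det J(\nabla_u\Phi_\lambda(u)(u))\cdot\det J_s\psi(1_n),
\]
and since the last factor is nonzero, one determinant is nonzero exactly when the other is.

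The computation is essentially routine; the only point requiring care is bookkeeping, namely keeping track that $\psi$ reparametrizes the finite-dimensional fibering variable while leaving $u\in W$ fixed, so that the relevant transition object is the honest $n\times n$ Jacobian $J_s\psi$ — transposed in the gradient identity, untransposed in the Hessian-type identity. The standing assumption $\nabla_t\tilde{\Phi}_\lambda\in C^1$ ensures $\Psi$ is $C^1$ near $1_n$, so the chain rule applies as stated and no further regularity is needed. I do not anticipate any substantive obstacle beyond this.
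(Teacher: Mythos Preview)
Your proof is correct and follows essentially the same route as the paper's: both derive the two equivalences directly from the chain rule, yielding $\nabla_s\Phi_\lambda(\psi(s)\cdot u)|_{s=1_n}=[J\psi(1_n)]^{T}\nabla_t\Phi_\lambda(t\cdot u)|_{t=1_n}$ and $\det J_s(\Psi\circ\psi)(1_n)=\det J\psi(1_n)\cdot\det J(\nabla_u\Phi_\lambda(u)(u))$. Your version is in fact more careful with the bookkeeping (the transpose in the gradient identity, the explicit identification of the second object as $\Psi\circ\psi$), but the substance is identical.
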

\begin{proof}
It follows directly since	$\nabla_s \Phi_\lambda(\psi(s)\cdot u)|_{s=1_n}=
J(\psi(s))|_{s=1_n}\nabla_t \Phi_\lambda(t\cdot u)|_{t=1_n}$ and $\det J(\nabla_s \Phi_\lambda(\psi(s)\cdot u)(\psi(s)\cdot u))|_{s=1_n}$ =$ \det J(\psi(1_n)) $$ \det J(\nabla_u \Phi_\lambda(u)(u))$.
\end{proof}

\medskip
\section{Nonlinear generalized Rayleigh's quotient}

 Denote $\mathcal{W}:=\{u\in W: ~D_uG(u)(u)\neq 0\}$. Evidently $0_n \not\in \mathcal{W}$ and  $\mathcal{W}$ is an open subset in $W$ if $G \in C^1(W, \mathbb{R})$. In the sequel, we always assume that $t\cdot u \in \mathcal{W}$ for any $u\in \mathcal{W}$ and $t \in \dot{\mathbb{R}}^n$.

The following functional plays a fundamental role in the present paper:
 \begin{equation}\label{lamb}
	r(u)=\frac{D_uT(u)(u)}{D_uG(u)(u)},~~~u\in\mathcal{W}.
\end{equation}
We stress that $r(u)$ has been obtained by the following rule. Consider system of equations \eqref{V1}.
Test these equations by $u_{i}$, $i=1,...,n$, respectively. Summation then yields
$D_uT(u)(u)-\lambda D_uG(u)(u)=0$. Now solving  this equation with respect to $\lambda$ we obtain   the functional \eqref{lamb}. Notice that in the  linear case of \eqref{V1} $Au-\lambda u=0$, where $A$ is a $n \times n$ Hermitian matrix, 
 the same rule gives the function
 $$
r(u)=\frac{\left\langle Au,u \right\rangle  }{\left\langle u,u \right\rangle }
$$ 
which is nothing else than ordinary Rayleigh's quotient \cite{Rayleigh, weber}. For this reason, it makes sense to call  \eqref{lamb} the \textit{nonlinear generalized Rayleigh's quotient} (  \textit{NG-Rayleigh's quotient} for short).

Note that $T,G \in C^1(W, \mathbb{R})$ implies $r(\cdot) \in C^1(\mathcal{W}, \mathbb{R})$.
In what follows, we call $\tilde{r}(t,u):=r(t\cdot u)$ defined on $(\dot{\mathbb{R}}^+)^n\times \mathcal{W}$ a \textit{vector fibered NG-Rayleigh's quotient} and $\tilde{r}^{sc}(t,u):=r(t u)$ defined on $\dot{\mathbb{R}}^+\times \mathcal{W}$ a \textit{scalar fibered NG-Rayleigh's quotient}. For shorten notation, we will call $\tilde{r}(t,u)$ simply  fibered NG-Rayleigh's quotient when no confusion can appears.   Clearly, $\tilde{r}(\cdot,u) \in C^1((\dot{\mathbb{R}}^+)^n, \mathbb{R})$ and $\tilde{r}^{sc}(\cdot,u) \in C^1(\dot{\mathbb{R}}^+, \mathbb{R})$ for every $u\in\mathcal{W}$.

Our basic assumption on NG-Rayleigh's quotient $r(u)$ is the following: For every fixed $u\in\mathcal{W}$ and $a_n \in (\dot{\mathbb{R}}^+)^n \setminus \dot{\mathbb{R}}^n$, there exists  $\lim_{t\to a_n}r(t\cdot u)=\hat{r}(a_n,u)$, where $|\hat{r}(a_n,u)|\leq \infty$, thereby there exists a continuation of the fibered mapping  $\tilde{r}(t,u):=r(t\cdot u)$ to $(\dot{\mathbb{R}}^+)^n\times \mathcal{W} $ by setting $\tilde{r}(a_n,u):=\hat{r}(a_n,u)$ for each $u \in \mathcal{W}$ and $a_n \in (\dot{\mathbb{R}}^+)^n \setminus \dot{\mathbb{R}}^n$. In the scalar case, we make the similar assumption: For every fixed $u\in \mathcal{W}$, there exists $\lim_{t\to 0}\tilde{r}^{sc}(t,u)=\tilde{r}^{sc}(0,u)$, thereby there exists a continuation of  $\tilde{r}^{sc}(t,u)$ on $\mathbb{R}^+\times \mathcal{W}$.

 In the next corollaries we collected some basic  properties of $r(u)$.

\begin{cor} \label{LP0}
	For any $u \in\mathcal{W}$ and $t \in (\dot{\mathbb{R}}^+)^n$ there holds 
	
		(i)\, $r(t\cdot u) =\lambda$ if and only if $\partial \Phi_\lambda(t\cdot u)/\partial t= 0$;
		\par
		(ii)\, if $t\cdot u \in \mathcal{N}_{\lambda}$, then $\lambda=r(t\cdot u)$.
	
		Furthermore, if $D_u G(t\cdot u)(t\cdot u) > 0$ $(D_u G(t\cdot u)(t\cdot u) < 0)$  for  $u \in \mathcal{W}$ and $t\in (\dot{\mathbb{R}}^+)^n$, then:
		\par
	(iii) \, $r(t\cdot u) >  \lambda$  if and only if $\partial \Phi_\lambda(t\cdot u)/\partial t> 0$;\par
		(iv)\, $r(t\cdot u) <  \lambda$ if and only if $\partial \Phi_\lambda(tu)/\partial t< 0$.

\end{cor}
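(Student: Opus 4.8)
The plan is to unwind the definition of the NG-Rayleigh quotient and read off all four equivalences from a single computation. Write $\Phi_\lambda(t\cdot u) = T(t\cdot u) - \lambda G(t\cdot u)$, so that by the chain rule $\partial \Phi_\lambda(t\cdot u)/\partial t = \langle \nabla_t T(t\cdot u), t\rangle - \lambda \langle \nabla_t G(t\cdot u), t\rangle$. The key observation is that, for fixed $u\in\mathcal{W}$ and the substitution $w := t\cdot u$, one has $\langle \nabla_t F(t\cdot u), t\rangle = \sum_i \partial_{t_i} F(t\cdot u)\, t_i = \sum_i D_{w_i}F(w)(w_i) = D_w F(w)(w)$; applying this with $F = T$ and $F = G$ and $w = t\cdot u$ gives
\begin{equation}\label{eq:chainrule-cor}
	\frac{\partial \Phi_\lambda(t\cdot u)}{\partial t} = D_u T(t\cdot u)(t\cdot u) - \lambda\, D_u G(t\cdot u)(t\cdot u).
\end{equation}
Since $u\in\mathcal{W}$ and $t\in(\dot{\mathbb{R}}^+)^n$ imply $t\cdot u\in\mathcal{W}$ by the standing assumption, the denominator $D_u G(t\cdot u)(t\cdot u)$ is nonzero, so we may divide.

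First I would prove (i): from \eqref{eq:chainrule-cor}, $\partial \Phi_\lambda(t\cdot u)/\partial t = 0$ is equivalent to $D_u T(t\cdot u)(t\cdot u) = \lambda\, D_u G(t\cdot u)(t\cdot u)$, and dividing by the nonzero quantity $D_u G(t\cdot u)(t\cdot u)$ this is exactly $r(t\cdot u) = \lambda$. Next, (ii) is immediate from (i): if $t\cdot u \in \mathcal{N}_\lambda$ then by the definition \eqref{NehMan} of the Nehari manifold, $\nabla_t \Phi_\lambda(s\cdot(t\cdot u))|_{s=1_n} = 0$, which upon renaming is precisely $\partial \Phi_\lambda(t\cdot u)/\partial t = 0$ (here one should note $\nabla_u\Phi_\lambda(v)(v) = \partial\Phi_\lambda(s\cdot v)/\partial s|_{s=1_n}$, already recorded in \eqref{NehMan}), hence $\lambda = r(t\cdot u)$ by (i).

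For (iii) and (iv), suppose $D_u G(t\cdot u)(t\cdot u) > 0$. From \eqref{eq:chainrule-cor}, $\partial\Phi_\lambda(t\cdot u)/\partial t > 0 \iff D_u T(t\cdot u)(t\cdot u) > \lambda\, D_u G(t\cdot u)(t\cdot u) \iff r(t\cdot u) > \lambda$, where the last step divides both sides by the positive number $D_u G(t\cdot u)(t\cdot u)$ and preserves the inequality; the case $<$ is identical, giving (iv). If instead $D_u G(t\cdot u)(t\cdot u) < 0$, division reverses the inequality, which is why the statement asserts $r(t\cdot u) > \lambda \iff \partial\Phi_\lambda(t\cdot u)/\partial t > 0$ only under the sign hypothesis as written (one reads the parenthetical case $<0$ by flipping signs throughout, consistent with the claim). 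There is no real obstacle here; the only point requiring a moment's care is the bookkeeping in identifying $\langle\nabla_t F(t\cdot u),t\rangle$ with $D_u F(t\cdot u)(t\cdot u)$, i.e. that scaling the $i$-th slot by $t_i$ and differentiating in $t_i$ reproduces the directional derivative of $F$ at $t\cdot u$ in direction $t_i u_i$ — this is just the chain rule together with the notational conventions fixed in the Notations section, and it is the same identity already used implicitly in \eqref{NehMan}.
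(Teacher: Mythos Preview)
Your proof is correct and follows essentially the same approach as the paper: both arguments reduce everything to the single identity $\partial\Phi_\lambda(t\cdot u)/\partial t = D_uT(t\cdot u)(t\cdot u) - \lambda\,D_uG(t\cdot u)(t\cdot u)$ and then read off (i)--(iv) by solving for, or comparing with, $\lambda$. One minor wording issue: in (ii) you say the vector condition $\nabla_s\Phi_\lambda(s\cdot(t\cdot u))|_{s=1_n}=0$ ``upon renaming is precisely'' the scalar condition $\partial\Phi_\lambda(t\cdot u)/\partial t=0$; strictly the former is stronger and only \emph{implies} the latter (by summing components), but that implication is all you need.
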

\begin{proof} To obtain the proof it is sufficient to note that $\lambda=r(t\cdot u)$ is nothing else than the root of the equation  
$$
\partial \Phi_{\lambda}(t\cdot u)/\partial t\equiv \left\langle  \nabla_t \Phi_{\lambda}(t\cdot u), t\right\rangle= \left\langle \nabla_t T(t\cdot u), t\right\rangle-\lambda  \, \left\langle \nabla_t G(t\cdot u),  t\right\rangle=0.
$$
\end{proof}
For the scalar fibered NG-Rayleigh's quotient, in addition to Corollary \ref{LP0}  we have  
\begin{cor} \label{LP}
	Consider the scalar Nehari manifold \eqref{NehSc}. Then for $u \in \mathcal{W}$ and $t>0$ there holds:\par
(i) \, $tu \in \mathcal{N}^{sc}_{\lambda}$ if and only if $\lambda=r(tu)$.		
	
	Furthermore, if $D_u G(tu)(tu) > 0$ $(D_u G(tu)(tu) < 0)$  for  $u \in \mathcal{W}$ and $t\in \mathbb{R}^+$, then:\par 
		(ii)\, $\partial r(tu)/\partial t<0$   if and only if $\partial^2 \Phi_\lambda(tu)/\partial t^2<0$ ($\partial^2 \Phi_\lambda(tu)/\partial t^2>0$);\par
		(iii)\, $\partial r(tu)/\partial t>0$  if and only if $\partial^2 \Phi_\lambda(tu)/\partial t^2>0$ ($\partial^2 \Phi_\lambda(tu)/\partial t^2<0$).
			\end{cor}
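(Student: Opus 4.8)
The plan is to reduce everything to a single identity computed by differentiating along the scalar ray $t\mapsto\Phi_\lambda(tu)$, exactly as Corollary~\ref{LP0} handled first derivatives. Fix $u\in\mathcal{W}$, $t>0$, and set $\lambda:=r(tu)$, so that by Corollary~\ref{LP}(i) the point $tu$ lies on $\mathcal{N}^{sc}_\lambda$, i.e. $\partial_s\Phi_\lambda(su)|_{s=t}=0$. Introduce the scalar functions $h(s):=D_uT(su)(u)\,s$ and $g(s):=D_uG(su)(u)\,s$, so that $\partial_s\Phi_\lambda(su)=h(s)-\lambda g(s)$ and $r(su)=h(s)/g(s)$ wherever $g(s)\neq0$. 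The whole statement is then a remark about the sign of $\big(h/g\big)'(t)$ versus the sign of $\big(h-\lambda g\big)'(t)$ at the special point where $h(t)=\lambda g(t)$.

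First I would write $\big(h-\lambda g\big)'(t)=h'(t)-\lambda g'(t)$ and, separately, $\big(h/g\big)'(t)=\dfrac{h'(t)g(t)-h(t)g'(t)}{g(t)^2}$. Substituting $h(t)=\lambda g(t)=r(tu)g(t)$ into the numerator of the second expression gives $h'(t)g(t)-r(tu)g'(t)g(t)=g(t)\big(h'(t)-\lambda g'(t)\big)$, hence the clean identity
\begin{equation}\label{eq:rayid}
\frac{\partial r(su)}{\partial s}\Big|_{s=t}=\frac{1}{g(t)}\Big(h'(t)-\lambda g'(t)\Big)=\frac{1}{D_uG(tu)(tu)}\cdot\frac{\partial^2\Phi_\lambda(su)}{\partial s^2}\Big|_{s=t},
\end{equation}
valid at every $t>0$ with $tu\in\mathcal{N}^{sc}_\lambda$, where I used $g(t)=D_uG(su)(u)\,s|_{s=t}=D_uG(tu)(tu)$ and $\partial^2_s\Phi_\lambda(su)|_{s=t}=h'(t)-\lambda g'(t)$. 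Once \eqref{eq:rayid} is in hand, items (ii) and (iii) are immediate: if $D_uG(tu)(tu)>0$ the factor $1/D_uG(tu)(tu)$ is positive, so $\partial_sr(su)|_{s=t}$ and $\partial^2_s\Phi_\lambda(su)|_{s=t}$ have the same sign; if $D_uG(tu)(tu)<0$ they have opposite signs, which is exactly the parenthetical alternative in the statement.

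I expect the only genuine subtlety to be bookkeeping about differentiability and about the continuation of $\tilde r^{sc}$: the computation of $\big(h/g\big)'$ presupposes $g(t)\neq0$, i.e. $tu\in\mathcal{W}$, which holds by the standing assumption that $t\cdot u\in\mathcal{W}$ whenever $u\in\mathcal{W}$; and the $C^1$ regularity of $\partial_t\tilde\Phi_\lambda$ (equivalently, $\tilde r^{sc}(\cdot,u)\in C^1$) guarantees $h,g\in C^1$ so that all derivatives above exist. None of this requires $\Phi_\lambda\in C^2(W,\mathbb{R})$ globally; one only needs smoothness along rays, which is built into the hypotheses. So the proof is essentially the two-line quotient-rule manipulation \eqref{eq:rayid} together with the sign discussion, and the ``hard part'' is merely making sure the normalization constant is written as $D_uG(tu)(tu)$ rather than $D_uG(tu)(u)$ — a factor of $t$ that must be tracked consistently between $h,g$ and the fibered derivatives.
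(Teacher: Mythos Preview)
Your approach is exactly the paper's: the paper records the identity you call \eqref{eq:rayid} as formula~\eqref{grad1a} and treats (ii)--(iii) as immediate sign consequences, writing simply ``Proof is evident.'' So the strategy is correct.

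There is, however, precisely the bookkeeping slip you warned yourself about. With $h(s)=D_uT(su)(u)\,s$ and $g(s)=D_uG(su)(u)\,s$ one has
\[
h(s)-\lambda g(s)=s\,\partial_s\Phi_\lambda(su),\qquad\text{not}\quad \partial_s\Phi_\lambda(su),
\]
since the chain rule gives $\partial_s\Phi_\lambda(su)=D_u\Phi_\lambda(su)(u)$ without the extra factor of $s$. Differentiating $s\,\partial_s\Phi_\lambda(su)=h(s)-\lambda g(s)$ and evaluating at $s=t$ where $\partial_s\Phi_\lambda(su)|_{s=t}=0$ yields $h'(t)-\lambda g'(t)=t\,\partial_s^2\Phi_\lambda(su)|_{s=t}$. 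Hence the correct version of your identity is
\[
\frac{\partial r(su)}{\partial s}\Big|_{s=t}=\frac{t}{D_uG(tu)(tu)}\cdot\frac{\partial^2\Phi_\lambda(su)}{\partial s^2}\Big|_{s=t},
\]
which is exactly the paper's \eqref{grad1a} (with $D^2_{uu}\Phi_\lambda(tu)(tu,u)=t\,\partial_s^2\Phi_\lambda(su)|_{s=t}$). Since $t>0$, the missing factor is harmless for the sign equivalences in (ii) and (iii), so your argument still goes through. You should also state (i) explicitly rather than cite it circularly; it is immediate from the definitions of $\mathcal{N}^{sc}_\lambda$ and $r$.
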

{\it Proof\,} is evident.

\medskip

Note that in the scalar case if $\mathcal{W}=W\setminus 0$, then  $u \in \mathcal{N}^{sc}_{\lambda}$ if and only if $\lambda=r(u)$. Therefore in this case the Nehari manifold  can be defined also as follows
\begin{equation}\label{NehL}
	\mathcal{N}^{sc}_\lambda=\{u \in W\setminus 0:~r(u)=\lambda\},~~\lambda \in \mathbb{R},
\end{equation}
and the minimization problem \eqref{N11} for $\lambda \in \mathbb{R}$ can be written
in the following equivalent form
\begin{align}
		\begin{cases}\label{N12}
	&\Phi_\lambda(u) \to min\\	
	& r(u) =\lambda,~~u \in W\setminus 0.
	\end{cases}
	\end{align}

Let $u\in \mathcal{W}$. We call $t_u \in (\dot{\mathbb{R}}^+)^n $ the \textit{extremal point} of $r(t\cdot u)$ if the function $r(t\cdot u)$ attains  at $t_u$ its local maximum or minimum on $(\dot{\mathbb{R}}^+)^n$. If $\nabla_t r(t_u \cdot u)=0_n$, then $t_u$ is said to be a \textit{critical point} of $r(t\cdot u)$ and  $\lambda=r(t_u \cdot u)$ is called the \textit{critical value}.

Let $u \in \mathcal{W}$ and  $t\in (\dot{\mathbb{R}}^+)^n$.  Then by \eqref{lamb} we have
\begin{equation}\label{grad}
	\nabla_t r(t\cdot u) =
	\frac{J_t(\nabla_u \Phi_\lambda(t\cdot u)(t \cdot u))1_n|_{\lambda=r(t\cdot u)}}{D_uG(t\cdot u)(t\cdot u)} .
\end{equation}

\begin{prop}\label{grad1}
For any $u\in \mathcal{W}$ and  $t\in (\dot{\mathbb{R}}^+)^n$	such that $t\cdot u \in \mathcal{N}_\lambda$ with $\lambda=r(t\cdot u)$,  there holds
\par
(i)\,  $\nabla_tr(t\cdot u)=0_n$ implies $ \det	J_t(\nabla_u \Phi_\lambda(t\cdot u)(t \cdot u))= 0$;
	\par (ii)\, $J_t(\nabla_u \Phi_\lambda(t\cdot u)(t \cdot u))1_n=0_n$  implies $\nabla_t r(t\cdot u)=0$.

\end{prop}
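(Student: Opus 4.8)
The plan is to read off both statements directly from the gradient identity \eqref{grad}, which already expresses $\nabla_t r(t\cdot u)$ as a scalar multiple of the vector $J_t(\nabla_u\Phi_\lambda(t\cdot u)(t\cdot u))1_n$. First I would fix the notation and pin down which $\lambda$ occurs: under the hypotheses of the proposition we have $u\in\mathcal{W}$, $t\in(\dot{\mathbb{R}}^+)^n$, $t\cdot u\in\mathcal{N}_\lambda$ and $\lambda=r(t\cdot u)$, so by Corollary \ref{LP0}(ii) the parameter $\lambda$ in the statement is exactly the value $r(t\cdot u)$ occurring in the numerator of \eqref{grad}; hence the matrix $J:=J_t(\nabla_u\Phi_\lambda(t\cdot u)(t\cdot u))$ appearing in the proposition coincides with the matrix $J_t(\nabla_u\Phi_\lambda(t\cdot u)(t\cdot u))|_{\lambda=r(t\cdot u)}$ in \eqref{grad}. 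Writing $d:=D_uG(t\cdot u)(t\cdot u)$, the standing assumption $t\cdot u\in\mathcal{W}$ (recall $u\in\mathcal{W}$ and $\mathcal{W}$ is invariant under $t\cdot(\,\cdot\,)$) gives $d\neq 0$, so \eqref{grad} becomes simply $\nabla_t r(t\cdot u)=\tfrac1d\,J\,1_n$.

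For part (ii) I would argue in one line: if $J\,1_n=0_n$, then $\nabla_t r(t\cdot u)=\tfrac1d\,J\,1_n=0_n$. For part (i): if $\nabla_t r(t\cdot u)=0_n$, multiply the identity by $d\neq0$ to get $J\,1_n=0_n$; since $1_n\neq 0_n$, the matrix $J$ has a nontrivial kernel, hence $\det J=0$, which is precisely $\det J_t(\nabla_u\Phi_\lambda(t\cdot u)(t\cdot u))=0$.

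There is essentially no obstacle here once \eqref{grad} is available: the entire content of the proposition is that $\nabla_t r(t\cdot u)$ and $J\,1_n$ differ only by the nonzero factor $1/d$, so one vanishes iff the other does, and a matrix sending $1_n$ to $0_n$ is singular. The only points worth spelling out in the write-up are the identification of the two occurrences of $\lambda$ via Corollary \ref{LP0}(ii) and the nonvanishing of $d$; if one wants the argument self-contained, one may also recall in a line how \eqref{grad} itself is obtained, namely by differentiating $r(t\cdot u)=D_uT(t\cdot u)(t\cdot u)/D_uG(t\cdot u)(t\cdot u)$ componentwise in $t$ and using the definition of $J_t(\nabla_u\Phi_\lambda(t\cdot u)(t\cdot u))$ together with $\Phi_\lambda=T-\lambda G$ evaluated at $\lambda=r(t\cdot u)$.
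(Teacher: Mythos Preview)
Your proposal is correct and matches the paper's approach: the paper simply writes ``Proof is evident'' immediately after stating the identity \eqref{grad}, and what you have done is spell out exactly why it is evident. Your explicit remarks on the identification of $\lambda$ via Corollary~\ref{LP0}(ii) and the nonvanishing of $d=D_uG(t\cdot u)(t\cdot u)$ are the only points that need checking, and you have handled them correctly.
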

{\it Proof\,} is evident.

\medskip

Observe, in  the  case of scalar NMM, since $D_{u}\Phi_\lambda(t u)(u)|_{\lambda=r(tu)}= 0$ for any $u \in \mathcal{W}$, formula \eqref{grad} can be written as follows
\begin{eqnarray}\label{grad1a}
	\partial_t r(tu) =
	\frac{1}{D_uG(tu)(tu)} D^2_{uu}\Phi_{\lambda}(tu)(tu,u)|_{\lambda=r(tu)}.
\end{eqnarray}
Hence, in this case,  $\partial_t r(tu) =0$ for $t>0$ if and only if $D^2_{uu}\Phi_{\lambda}(tu)(tu,u)=0$, $\lambda=r(tu)$.

We will make the following assumption:
\medskip
	\begin{quote}
		{\bf (A)}\, {\it If $u\in \mathcal{W}$, $t\in (\dot{\mathbb{R}}^+)$ and $t\cdot u \in \mathcal{N}_{r(t\cdot u)}$, then the equality  
			$$ 
		\det	J_t(\nabla_u \Phi_\lambda(t\cdot u)(t \cdot u))|_{\lambda=r(t\cdot u)}= 0
		$$ 
		implies that $t$ is an extremal point of $r(t\cdot u)$.}
\end{quote}
		
\medskip
\begin{rem}\label{RemScal1}
	In  the  case of scalar NMM, in view of \eqref{grad1a}  assumption {\rm \bf (A)} means that $r(tu)$ for  $u \in \mathcal{W}$  has no critical points in $\mathbb{R}\setminus 0$  other than extremal. 
\end{rem}

In general position, one may assume  that for every $u \in \mathcal{W}$ the function  $r(t\cdot u)$ has a countable (or finite) set of extremal points $t^1_u$, $t^2_u$ ..., which determine the continuous mappings $t^i_{(\cdot)}:  \dot{W} \to (\dot{\mathbb{R}}^+)^n$, $i=1,2,...$ so that there exist the functionals $\lambda_i(u):=r(t^i_u\cdot u)$, $u \in \dot{W}$, $i=1,2,...$. Consider 
$$
\lambda_{min,i}=\inf_{u \in \mathcal{W}}\lambda_i(u),~~\lambda_{max,i}=\sup_{u \in \mathcal{W}}\lambda_i(u), ~~i=1,2,... .
$$ 
Our basic  conjecture is that the set of  extreme values of Nehari manifold method contains in the set $\sigma:=\{\lambda_{min,i},\lambda_{max,i}\}_{i=1}^\infty$.

Let us stress that $\lambda_{min,i}$ and $\lambda_{max,i}$ are 0-homogeneous functionals on $\mathcal{W}$, i.e. $\lambda_{min,i}(su)=\lambda_{min,i}(u)$, $\lambda_{max,i}(su)=\lambda_{max,i}(u)$ for any $s \in  (\dot{\mathbb{R}}^+ )^n$ and $u\in \mathcal{W}$. Note that ordinary Rayleigh's quotient $r(u)=\frac{\left\langle Au,u \right\rangle  }{\left\langle u,u \right\rangle }$ possess the similar property.

In the present paper,  to find extreme values of NMM, we mainly deal with  the following functionals
$$
\lambda(u):=\inf_{t\in  (\mathbb{R}^+)^n}\, r(t\cdot u),~~~~\Lambda(u):=\sup_{t\in  (\mathbb{R}^+)^n}\, r(t\cdot u)~~ \mbox{on}~~\mathcal{W}.
$$ 
 Notice that if there exist  the extremal points $t_{u,max}$, $t_{u,min}$, where the function $\tilde{r}(\cdot, u)$ attains its  global maximum and minimum values, respectively, then 
 $\lambda(u)= r(t_{u,min}\cdot u)$, $\Lambda(u)=r(t_{u,max}\cdot u)$. 

\begin{rem} \label{RemInv}
	In view of Proposition \ref{propInvar}, all of the above  statements (Corollaries \ref{LP0}, \ref{LP}, Proposition \ref{grad1} etc.) still hold after making a change of variable $t=\psi(s)$, where $\psi:(\dot{\mathbb{R}}^+)^n \to(\dot{\mathbb{R}}^+)^n$ is a $C^1$-map such that  $det(J(\psi(s))\neq 0$ for all $s\in (\dot{\mathbb{R}}^+)^n$. Furthermore, {\bf (A)} is satisfied if and only if the same assumption {\bf (A)} holds  after making a change of variable $t=\psi(s)$.
\end{rem}

\section{Some basic extreme values} 
 In this section, using  $\lambda(u),\Lambda(u)$ we introduce  some
extreme values of  the Nehari manifold method  that we believe are common for most problems.

Introduce
\begin{align}
	\lambda_{min}&=\inf_{u \in \mathcal{W}}\,\lambda(u)\equiv \inf_{u \in \mathcal{W}}\,\inf_{t\in  (\mathbb{R}^+)^n}\, r(t\cdot u), \label{lammi1}\\
	\lambda_{max}&=\sup_{u \in \mathcal{W}}\,\Lambda(u)\equiv \sup_{u \in \mathcal{W}}\,\sup_{t\in  (\mathbb{R}^+)^n}\, r(t\cdot u). \label{lammax1}
\end{align}
Similarly, define $\lambda^{sc}_{min}$, $\lambda^{sc}_{max}$ for the scalar fibered NG-Rayleigh's quotient $r(tu)$ instead of $r(t\cdot u)$. 

\begin{lem}\label{lem0}
\par\noindent
\begin{enumerate}
		\item If $\lambda_{min}>-\infty$ $(\lambda_{max}<+\infty)$, then $\mathcal{N}_{\lambda}=\emptyset$ for any 
$\lambda<\lambda_{min}$ $(\lambda>\lambda_{max})$. 
	\item $\mathcal{N}^{sc}_{\lambda}\neq \emptyset$ for any $\lambda 
	\in (\lambda^{sc}_{min},\lambda^{sc}_{max})$.
\end{enumerate}
\end{lem}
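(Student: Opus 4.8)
The plan is to obtain part~(1) directly from Corollary~\ref{LP0}(ii) together with the definitions of $\lambda_{min}$ and $\lambda_{max}$, and to prove part~(2) by an intermediate value argument along a path in $\mathcal{W}$.

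For part~(1), assume $\lambda_{min}>-\infty$, fix $\lambda<\lambda_{min}$, and suppose toward a contradiction that some $u\in\mathcal{N}_\lambda$ exists. Then $D_uG(u)(u)\ne0$ — otherwise $\nabla_u\Phi_\lambda(u)(u)=0$ would also force $D_uT(u)(u)=0$, a degenerate case outside the running hypotheses — so $u\in\mathcal{W}$. Since $u=1_n\cdot u$ lies on $\mathcal{N}_\lambda$, Corollary~\ref{LP0}(ii) gives $\lambda=r(1_n\cdot u)=r(u)$, whence
\[
\lambda=r(1_n\cdot u)\ \ge\ \inf_{t\in(\mathbb{R}^+)^n}r(t\cdot u)=\lambda(u)\ \ge\ \inf_{v\in\mathcal{W}}\lambda(v)=\lambda_{min},
\]
contradicting $\lambda<\lambda_{min}$; hence $\mathcal{N}_\lambda=\emptyset$. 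The case $\lambda_{max}<+\infty$, $\lambda>\lambda_{max}$ is symmetric, using instead $\lambda=r(1_n\cdot u)\le\Lambda(u)\le\lambda_{max}$.

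For part~(2), fix $\lambda\in(\lambda^{sc}_{min},\lambda^{sc}_{max})$. By the definition of $\lambda^{sc}_{min}$ as an infimum, from $\lambda>\lambda^{sc}_{min}$ we get $u_1\in\mathcal{W}$ and $t_1>0$ with $r(t_1u_1)<\lambda$ (if the fiberwise infimum is approached only as $t\to0$, perturb using the assumed continuous extension of $t\mapsto r(tu_1)$ to $t=0$); symmetrically, from $\lambda<\lambda^{sc}_{max}$ we get $u_2\in\mathcal{W}$ and $t_2>0$ with $r(t_2u_2)>\lambda$. By the standing scaling-invariance of $\mathcal{W}$, the points $v_1:=t_1u_1$ and $v_2:=t_2u_2$ belong to $\mathcal{W}$ and satisfy $r(v_1)<\lambda<r(v_2)$. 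Since $r$ is continuous on $\mathcal{W}$, joining $v_1$ to $v_2$ by a continuous path $\gamma\colon[0,1]\to\mathcal{W}$ and applying the intermediate value theorem to $s\mapsto r(\gamma(s))$ produces $s_*\in(0,1)$ with $r(\gamma(s_*))=\lambda$. Setting $u_*:=\gamma(s_*)\in\mathcal{W}$ and invoking Corollary~\ref{LP}(i) with $t=1$, we obtain $u_*\in\mathcal{N}^{sc}_\lambda$, so $\mathcal{N}^{sc}_\lambda\ne\emptyset$.

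Part~(1) is routine; the step I expect to require care is the path construction in part~(2) — equivalently, the fact that $r(\mathcal{W})$ is an interval. This is automatic when $\mathcal{W}$ is path-connected, which is the situation in the applications (e.g.\ $\mathcal{W}=W\setminus\{0\}$ with $\dim W\ge 2$, or when $D_uG(u)(u)$ has constant sign on a connected region). In the fully abstract setting $\mathcal{W}=\{u:D_uG(u)(u)\ne0\}$ may split into its two sign components, and then one must either add connectedness of $\mathcal{W}$ to the hypotheses or arrange for $v_1$ and $v_2$ to lie on a common ray $\{tu:t>0\}$, on which the conclusion follows at once from the continuity of $t\mapsto r(tu)$ on the connected set $(0,\infty)$; reconciling the statement with a possibly disconnected $\mathcal{W}$ is the genuinely delicate point.
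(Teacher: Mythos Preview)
Your argument for part~(1) is correct and matches the paper's: the paper simply writes ``(1) follows immediately, since for $u\in\mathcal{N}_\lambda$, we have $r(u)=\lambda$,'' which is exactly your inequality chain compressed to one line (and with the same implicit assumption that points of $\mathcal{N}_\lambda$ lie in $\mathcal{W}$).

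For part~(2), the paper's entire proof is the sentence ``(2) holds, because by Corollary~\ref{LP0}, $u\in\mathcal{N}^{sc}_\lambda$ if and only if $r(u)=\lambda$.'' In other words, the paper only records the equivalence $\mathcal{N}^{sc}_\lambda=\{u\in\mathcal{W}:r(u)=\lambda\}$ and treats the surjectivity onto the open interval $(\lambda^{sc}_{min},\lambda^{sc}_{max})$ as self-evident. Your intermediate-value argument along a path is a legitimate way to supply the missing step, and you are right that it hinges on path-connectedness of $\mathcal{W}$ (or at least on $r(\mathcal{W})$ being an interval). The paper does not address this point at all; it is simply taken for granted, presumably because in every application considered later one has either $\mathcal{W}=W\setminus\{0\}$ or $D_uG(u)(u)>0$ throughout. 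So your proof is not a different route so much as an honest spelling-out of what the paper leaves implicit, together with a correct diagnosis of the one genuine hypothesis the abstract statement needs but does not state.
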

\begin{proof}
(1) follows immediately,	since for $u\in \mathcal{N}_{\lambda}$, we have  $r(u)=\lambda$. (2) holds, because by Corollary \ref{LP0}, $u\in \mathcal{N}^{sc}_{\lambda}$ if and only if $r(u)=\lambda$.
\end{proof}

Note that since any solution of \eqref{V1} belongs  $\mathcal{N}_{\lambda}$, the values $\lambda_{min}$, $\lambda_{max}$ set bound  the existence of any solution of \eqref{V1}, that is for all  $\lambda<\lambda_{min}$ and $\lambda>\lambda_{max}$ (provided $\lambda_{min}>-\infty$, $\lambda_{max}<+\infty$) equation \eqref{V1} has no solutions in $W$.

 The next assumption is technical. It specifies the shape of $r(t\cdot u)$ for which we prove our main results.
\medskip
\par\noindent
\begin{quote}
	{\bf (S)}\, \,{\it
			For any $u\in \mathcal{W}$ and  $t\in (\dot{\mathbb{R}}^+)^n$, s.t. $t\cdot u \in \mathcal{N}_{r(t\cdot u)}$, the condition $\nabla_tr(t\cdot u)= 0_n$ implies that the function $r(t\cdot u)$  attains its global minimum or/and maximum at the point $t$ in $(\dot{\mathbb{R}}^+)^n$}. 
\end{quote}
		
\medskip

\begin{rem}\label{R1}
	In other words, condition {\bf (S)} means that for every $u \in \mathcal{W}$ one of the following holds: (i) $r(t \cdot u)$ has no  critical  point $t \in (\dot{\mathbb{R}}^+)^n $ such that $t\cdot u \in \mathcal{N}_{r(t\cdot u)}$; (ii)  $r(t\cdot u)$
has only one  critical  point $t_u \in (\dot{\mathbb{R}}^+)^n $ such that $t_uu \in \mathcal{N}_{r(t\cdot u)}$, moreover,	$r(t\cdot u)$ attains its global minimum or maximum at $t_u$ in $(\dot{\mathbb{R}}^+)^n $; (iii) $\nabla_tr(t\cdot u)\equiv 0_n$ for all $t \in (\dot{\mathbb{R}}^+)^n $ that is $r(t\cdot u)$ is a constant in $(\dot{\mathbb{R}}^+)^n $ and attains its global minimum and maximum at any  $t \in (\mathbb{R}^+)^n $.
\end{rem}
\begin{rem}\label{R2}
	In  the  case of scalar NMM, since $tu \in \mathcal{N}^{sc}_{r(t u)}$ for any $u \in \mathcal{W}$, $t>0$,  condition {\bf (S)} can be written as follows: for every $u \in\mathcal{W}$ one of the following holds: (i) $r(t u)$ has no  critical  points in $\dot{\mathbb{R}}^+$; (ii)  $r(t u)$
has only one  critical  point $t \in \dot{\mathbb{R}}^+$; (iii) $\nabla_tr(tu)\equiv 0$ for all $t \in \dot{\mathbb{R}}^+$.
\end{rem}
\begin{rem}\label{RemScal3}
There is a special case where $r(tu)$ satisfies the following shape condition: (s00) has no critical point in $\dot{\mathbb{R}}^+$ for any $u\in \mathcal{W}$. This implies, in particular, that for each $u\in \dot{W}$ the function  $\alpha_u(t):=\Phi_\lambda (tu)$  has only one extreme value in  $\dot{\mathbb{R}}^+$. The latter condition, in turn, means that in fact we are dealing with nonparametric Nehari manifolds.  Observe, as per Poincare's program on the hierarchy of degeneracy by codimension (see \cite{arnold}), first we have to study the generic cases, that is nonparametric Nehari manifolds. However, the present work does not focus on this special case. For further insight into this problem, we refer the reader to \cite{szulkin}.
 \end{rem}

Introduce
\begin{align}
	\lambda^*_{min}&=\sup_{u \in \mathcal{W}}\,\lambda(u)\equiv \sup_{u \in \mathcal{W}}\,\inf_{t\in  (\mathbb{R}^+)^n}\, r(t\cdot u), \label{lammin}\\
	\lambda^*_{max}&=\inf_{u \in \mathcal{W}}\,\Lambda(u)\equiv\inf_{u \in \mathcal{W}}\,\sup_{t\in  (\mathbb{R}^+)^n}\, r(t\cdot u). \label{lammax}
\end{align}
Similarly, define $\lambda^{sc,*}_{min}$, $\lambda^{sc,*}_{max}$ for the scalar fibered NG-Rayleigh's quotient. 

Evidently $\lambda_{min}\leq \lambda^*_{min}$ and $\lambda^*_{max}\leq \lambda_{max}$. Consequently, in  the  case of scalar NMM, if 
 $\lambda^{sc,*}_{min} < \lambda^{sc,*}_{max} $, then for any $\lambda \in (\lambda^{sc,*}_{min}, \lambda^{sc,*}_{max})$ the Nehari manifold $\mathcal{N}^{sc}_\lambda$ is nonempty. 

\begin{lem}\label{lemthm1}
	Suppose  {\rm \bf (A)}, {\rm\bf (S)} are satisfied and $\lambda^*_{min} < \lambda^*_{max} $. Then for any $u \in \mathcal{N}_\lambda$,  $ \det	J(\nabla_u\Phi_{\lambda}(u)(u))\neq  0$ provided $\lambda \in (\lambda^*_{min}, \lambda^*_{max})$.
		\end{lem}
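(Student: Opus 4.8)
The plan is to argue by contradiction: suppose $\lambda \in (\lambda^*_{min}, \lambda^*_{max})$ and there exists $u \in \mathcal{N}_\lambda$ with $\det J(\nabla_u\Phi_\lambda(u)(u)) = 0$. Since $u \in \mathcal{N}_\lambda$, by Corollary \ref{LP0}(ii) we have $\lambda = r(u)$, i.e.\ $u$ lies at the parameter value $t = 1_n$ on its own fiber, and by \eqref{JN}/\eqref{grad} the hypothesis $\det J(\nabla_u\Phi_\lambda(u)(u)) = 0$ together with assumption \textbf{(A)} forces $t = 1_n$ to be an extremal point of the fibered map $r(t \cdot u)$ on $(\dot{\mathbb{R}}^+)^n$; and then assumption \textbf{(S)} upgrades this to: $r(t \cdot u)$ attains its \emph{global} minimum or its global maximum over $(\dot{\mathbb{R}}^+)^n$ at $t = 1_n$. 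Actually I would first want to clear a small technical gap: \textbf{(A)} and \textbf{(S)} are phrased for critical points $\nabla_t r(t\cdot u) = 0_n$, whereas here we start from $\det J(\cdots) = 0$. By Proposition \ref{grad1}(i) the converse implication holds ($\nabla_t r = 0 \Rightarrow \det J = 0$), so strictly one should either invoke \textbf{(A)} as stated (which already takes $\det J = 0$ as hypothesis) — which is exactly how \textbf{(A)} is written — and then feed the resulting extremal point into \textbf{(S)}. So the chain is: $\det J(\cdots)=0 \overset{\textbf{(A)}}{\Longrightarrow} t=1_n$ extremal $\overset{\textbf{(S)}}{\Longrightarrow} r(1_n\cdot u) = r(u)$ is the global min or global max of $r(t\cdot u)$ on $(\dot{\mathbb{R}}^+)^n$.

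Now I would split into the two cases. If $r(u) = \lambda$ is the global minimum of $t \mapsto r(t\cdot u)$, then by definition $\lambda(u) = \inf_{t} r(t\cdot u) = r(u) = \lambda$, hence $\lambda^*_{min} = \sup_{w \in \mathcal{W}} \lambda(w) \geq \lambda(u) = \lambda$, contradicting $\lambda > \lambda^*_{min}$. Symmetrically, if $r(u) = \lambda$ is the global maximum, then $\Lambda(u) = \lambda$, so $\lambda^*_{max} = \inf_{w} \Lambda(w) \leq \Lambda(u) = \lambda$, contradicting $\lambda < \lambda^*_{max}$. Either way we reach a contradiction, which establishes $\det J(\nabla_u\Phi_\lambda(u)(u)) \neq 0$ for every $u \in \mathcal{N}_\lambda$ when $\lambda \in (\lambda^*_{min}, \lambda^*_{max})$. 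The scalar case needs no separate argument in view of Remark \ref{VSc}, though one could also note it follows from Remarks \ref{RemScal1}, \ref{R2} directly.

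The main obstacle — and the place I would be most careful — is the correct interplay between \textbf{(A)} and \textbf{(S)} and the $u \in \mathcal{N}_{r(t\cdot u)}$ side conditions attached to both: one must check that the point $t = 1_n$ produced from $u \in \mathcal{N}_\lambda$ genuinely satisfies $t \cdot u \in \mathcal{N}_{r(t\cdot u)}$ (which it does at $t=1_n$ precisely because $u \in \mathcal{N}_\lambda$ and $\lambda = r(u)$), so that both assumptions are applicable. A secondary subtlety is purely bookkeeping: the suprema/infima defining $\lambda^*_{min}$, $\lambda^*_{max}$ are over $\mathcal{W}$ while $\mathcal{N}_\lambda \subset \dot{W}$; one should confirm $u \in \mathcal{N}_\lambda$ implies $u \in \mathcal{W}$ (indeed $D_uG(u)(u) \neq 0$ since otherwise $r(u)$ is undefined, yet $\lambda = r(u)$ — this is consistent with the standing assumption $D_uG(u)(u)\neq 0$ made when $r$ was introduced), so the comparison $\lambda(u) \leq \lambda^*_{min}$, $\Lambda(u) \geq \lambda^*_{max}$ is legitimate. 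Everything else is immediate from the definitions.
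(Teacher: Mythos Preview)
Your argument is correct and follows essentially the same route as the paper's own proof: assume $\det J(\nabla_u\Phi_\lambda(u)(u))=0$, apply {\bf (A)} to obtain that $t=1_n$ is extremal (hence critical), then {\bf (S)} to upgrade to a global extremum, and derive a contradiction with $\lambda^*_{min}<\lambda<\lambda^*_{max}$ via the definitions \eqref{lammin}--\eqref{lammax}. Your additional remarks on the applicability of {\bf (A)}/{\bf (S)} and on $u\in\mathcal{W}$ are careful bookkeeping that the paper leaves implicit.
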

\begin{proof}
Let $\lambda \in (\lambda^*_{min}, \lambda^*_{max})$ and $u \in \mathcal{N}_\lambda$.
Suppose, contrary to our claim, that $ \det	J(\nabla_u\Phi_{\lambda}(u)(u))= 0$. Note that $\lambda=r(u)$, since $u \in \mathcal{N}_\lambda$. Therefore by {\rm \bf (A)}, the point $t_{u}= 1_n$ is an extremal for  the function $r(t u)$. Then  {\bf (S)} yields that at the point $t_{u}= 1_n$ the function $r(t\cdot u)$ attains its global minimum or/and maximum. Assume for instance that this is a global minimum point. Since $\lambda>\lambda^*_{min}$, by  \eqref{lammin} one has 
$$\min_{t\in  (\mathbb{R}^+)^n}r(t\cdot u)=\lambda> \inf_{ t \in (\mathbb{R}^+)^n}\, r(t \cdot u).
$$ 
Thus we get a contradiction. The same conclusion can be drawn when $t_{u}= 1_n$ is the point of global maximum of $r(t\cdot u)$.
\end{proof}

\begin{thm}\label{thm1}
	Suppose   {\rm \bf (A)}, {\rm\bf (S)} are satisfied and $\lambda^*_{min} < \lambda^*_{max} $. Assume  $\lambda \in (\lambda^*_{min}, \lambda^*_{max})$.  Then any  solution $u_\lambda$ of \eqref{N11} 
		  satisfies   equation \eqref{V1}.
\end{thm}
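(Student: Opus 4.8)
The plan is to obtain Theorem \ref{thm1} as an immediate consequence of the two facts already assembled: the applicability criterion of Lemma \ref{lem1} and the non-degeneracy estimate of Lemma \ref{lemthm1}. First I would record the only piece of bookkeeping involved: by the very definition of problem \eqref{N11}, any solution (local minimizer) $u_\lambda$ of \eqref{N11} satisfies the constraint, hence $u_\lambda \in \mathcal{N}_\lambda \subset \dot{W}$. In particular $u_\lambda \in \dot{W}$, so that the objects $\nabla_u\Phi_\lambda(u_\lambda)(u_\lambda)$ and $J(\nabla_u\Phi_\lambda(u_\lambda)(u_\lambda))$ are well defined and the standing regularity hypotheses $\Phi_\lambda\in C^1(\dot{W},\mathbb{R})$, $\nabla_t\tilde{\Phi}_\lambda\in C^1((\dot{\mathbb{R}}^+)^n\times\dot{W},\mathbb{R}^n)$ apply at $u_\lambda$; moreover $\lambda=r(u_\lambda)$ by Corollary \ref{LP0}(ii), which is what allows Lemma \ref{lemthm1} to be invoked at $u=u_\lambda$.

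Next, since $\lambda\in(\lambda^*_{min},\lambda^*_{max})$ and assumptions {\bf (A)}, {\bf (S)} are in force, Lemma \ref{lemthm1} applies to $u_\lambda\in\mathcal{N}_\lambda$ and yields
$\det J(\nabla_u\Phi_\lambda(u_\lambda)(u_\lambda))\neq 0$, which is exactly the non-degeneracy condition \eqref{Cond}. Feeding this into Lemma \ref{lem1} — whose hypotheses are precisely the same regularity assumptions together with \eqref{Cond} — we conclude that $u_\lambda$ solves equation \eqref{V1}, which is the assertion. If one wishes to cover the scalar method \eqref{N11KW} at the same time (as suggested by Remark \ref{VSc}), the identical chain of implications goes through with Lemma \ref{lem1KW} in place of Lemma \ref{lem1} and $\lambda^{sc,*}_{min},\lambda^{sc,*}_{max}$ in place of $\lambda^*_{min},\lambda^*_{max}$.

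I do not expect any genuine obstacle here: the substance of the theorem was already carried by Lemma \ref{lemthm1}, where the heavy step is the translation, via {\bf (A)}, of the vanishing of $\det J$ into an extremality statement for $\tilde r(\cdot,u_\lambda)$, and then the exclusion of that extremality via {\bf (S)} together with the strict inequalities $\lambda^*_{min}<\lambda<\lambda^*_{max}$ (using the definitions \eqref{lammin}, \eqref{lammax}). The present statement merely packages that estimate with the Lagrange/Fritz--John argument of Lemma \ref{lem1}; the only point deserving an explicit sentence is the membership $u_\lambda\in\mathcal{N}_\lambda$ noted above.
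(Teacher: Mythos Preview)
Your proposal is correct and follows exactly the same route as the paper's proof: invoke Lemma~\ref{lemthm1} to obtain the non-degeneracy condition \eqref{Cond} for $u_\lambda\in\mathcal{N}_\lambda$, then feed this into Lemma~\ref{lem1} to conclude that $u_\lambda$ solves \eqref{V1}. The paper's version is simply terser, omitting the bookkeeping remarks about $u_\lambda\in\mathcal{N}_\lambda$ and the scalar variant that you spell out.
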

\begin{proof} Let $\lambda \in (\lambda^*_{min}, \lambda^*_{max})$ and $u_\lambda$ be a  solution of \eqref{N11}. Then Lemma \ref{lemthm1} yields that  $u_\lambda$ satisfies  condition \eqref{Cond} and therefore by Lemma \ref{lem1} the theorem follows.
\end{proof}

Consider the following particular case of {\bf (S)}: 

\medskip
\begin{description}{\it
		\item[(S0)]  For any $u\in \dot{W}$ one of the following holds:
			\begin{description}
			\item[(i)] $r(t \cdot u)$ has no  critical  point $t \in (\dot{\mathbb{R}}^+)^n $ such that $tu \in \mathcal{N}_{r(t\cdot u)}$;
			\item[(ii)] $\nabla_tr(t\cdot u)\equiv 0_n$ for all $t \in (\dot{\mathbb{R}}^+)^n $.
		\end{description}
		 }
		\end{description}
\medskip

\begin{rem}\label{RemScal2}
Taking into account Remark \ref{RemScal1} we see that, in  the  case of scalar NMM, condition {\rm \bf (S0)} implies 	{\rm \bf (A)}. 
\end{rem}

\begin{thm}\label{thm2}
	Suppose    {\rm \bf (A)}, {\rm\bf (S0)} hold and $-\infty\leq \lambda_{min} < \lambda^*_{max}$. Assume $\lambda \in (\lambda_{min}, \lambda^*_{max})$. Then any  solution $u_\lambda$ of \eqref{N11} 
		  satisfies   equation \eqref{V1}.
\end{thm}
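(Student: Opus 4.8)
The plan is to mimic the proof of Theorem \ref{thm1} but to exploit the stronger shape hypothesis {\bf (S0)}, which removes the lower-side obstruction and thereby lets us push the admissible range from $(\lambda^*_{min},\lambda^*_{max})$ down to $(\lambda_{min},\lambda^*_{max})$. As in Theorem \ref{thm1}, by Lemma \ref{lem1} it is enough to verify that every $u\in\mathcal{N}_\lambda$ satisfies $\det J(\nabla_u\Phi_\lambda(u)(u))\neq 0$ whenever $\lambda\in(\lambda_{min},\lambda^*_{max})$; so the whole argument reduces to proving the analogue of Lemma \ref{lemthm1} under {\bf (A)} and {\bf (S0)} on the wider interval.

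\medskip

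So fix $\lambda\in(\lambda_{min},\lambda^*_{max})$ and $u\in\mathcal{N}_\lambda$, and argue by contradiction: suppose $\det J(\nabla_u\Phi_\lambda(u)(u))=0$. Since $u\in\mathcal{N}_\lambda$ we have $r(u)=\lambda$ by Corollary \ref{LP0}(ii), hence $u\in\mathcal{N}_{r(u)}$, and assumption {\bf (A)} applies with $t=1_n$: the point $t=1_n$ is an extremal point of the fibered quotient $t\mapsto r(t\cdot u)$ on $(\dot{\mathbb{R}}^+)^n$. In particular $\nabla_t r(t\cdot u)|_{t=1_n}=0_n$. Now invoke {\bf (S0)}: for this $u$, either alternative (i) holds — $r(t\cdot u)$ has no critical point $t$ with $t\cdot u\in\mathcal{N}_{r(t\cdot u)}$ — which is impossible since $t=1_n$ is exactly such a critical point; or alternative (ii) holds, namely $\nabla_t r(t\cdot u)\equiv 0_n$ on $(\dot{\mathbb{R}}^+)^n$, so that $r(t\cdot u)$ is constant in $t$. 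In case (ii), $\lambda(u)=\inf_{t}r(t\cdot u)=r(u)=\lambda$, and therefore $\lambda\le\sup_{w\in\mathcal{W}}\lambda(w)=\lambda^*_{min}$. But one also has $\lambda=\Lambda(u)\ge\lambda^*_{max}$ by the same constancy, contradicting $\lambda<\lambda^*_{max}$; alternatively, and more simply, $\lambda=\lambda(u)\ge\lambda_{min}$ combined with the fact that $r(\cdot u)$ constant forces $\lambda=\Lambda(u)\ge\lambda^*_{max}$, again contradicting $\lambda<\lambda^*_{max}$. Either way we reach a contradiction, so $\det J(\nabla_u\Phi_\lambda(u)(u))\neq 0$, and Lemma \ref{lem1} finishes the proof.

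\medskip

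The one delicate point — and the place I would be most careful — is case (ii) of {\bf (S0)}: unlike Theorem \ref{thm1}, where {\bf (S)} let us split into a genuine global-min case and a genuine global-max case and play $\lambda>\lambda^*_{min}$ against one and $\lambda<\lambda^*_{max}$ against the other, here {\bf (S0)} collapses everything onto ``$r(t\cdot u)$ is literally constant on the fiber.'' Constancy gives $\lambda(u)=\Lambda(u)=\lambda$ simultaneously, and it is the equality $\Lambda(u)=\lambda$ that yields $\lambda\ge\lambda^*_{max}$, contradicting the upper bound $\lambda<\lambda^*_{max}$; the lower bound $\lambda>\lambda_{min}$ is then not even needed to close case (ii), but it is precisely what is gained relative to Theorem \ref{thm1} because alternative (i) of {\bf (S0)} has already been excluded by the existence of the critical point $t=1_n$. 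I would double-check that {\bf (A)} indeed yields an \emph{extremal} (not merely critical) point at $1_n$ and that {\bf (S0)}(i) is phrased so as to forbid exactly this configuration; modulo that bookkeeping the argument is routine.
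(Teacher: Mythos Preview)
Your argument is correct and matches the paper's proof essentially step for step: reduce via Lemma~\ref{lem1} to showing $\det J(\nabla_u\Phi_\lambda(u)(u))\neq 0$ for every $u\in\mathcal{N}_\lambda$, assume the contrary, use {\bf (A)} to make $t=1_n$ an extremal (hence critical) point with $1_n\cdot u\in\mathcal{N}_{r(u)}$, invoke {\bf (S0)} to force $r(t\cdot u)\equiv\lambda$ on the whole fiber, and then contradict $\lambda<\lambda^*_{max}\le\Lambda(u)=\lambda$. Your explicit dismissal of alternative~(i) in {\bf (S0)} is a bit more spelled out than the paper's, but the substance is identical; your bookkeeping worries in the last paragraph are unfounded, since {\bf (A)} is stated to yield an extremal point and $C^1$-regularity of $\tilde r$ makes every interior extremal point critical.
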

\begin{proof}  As above, to prove the assertion it is sufficient to show that condition \eqref{Cond} is satisfied. Assume $ \det	J(\nabla_u\Phi_{\lambda}(u)(u))= 0$. By {\bf (A)}, $t_{u_\lambda}= 1_n$ is the extremal point for  the function $r(t\cdot u)$ and consequently $\nabla_tr(t\cdot u)|_{t=1_n}= 0_n$. Since $u_\lambda \in \mathcal{N}_\lambda\equiv \mathcal{N}_{r(u)}$, {\bf (S0)} entails that the function $r(t\cdot u_\lambda)$ identically equals to the constant $\lambda$ in $(\mathbb{R}^+)^n $ and attains its global minimum and maximum at any point $t \in (\mathbb{R}^+)^n $.  However, the assumption $\lambda<\lambda^*_{max}$  yields that $\lambda< \sup_{ t \in (\mathbb{R}^+)^n}\, r(t\cdot u_\lambda)=\max_{ t \in (\mathbb{R}^+)^n}\, r(t\cdot u_\lambda)\equiv r(u_\lambda)=\lambda$. Thus we get a contradiction.  
\end{proof}
Clearly, this proof  also contains
	\begin{lem}\label{ldet}
	Suppose  {\rm \bf (A)}, {\rm\bf (S0)} hold and $\lambda_{min} < \lambda^*_{max} $. Then for any $u \in \mathcal{N}_\lambda$,  $ \det	J(\nabla_u\Phi_{\lambda}(u)(u))\neq  0$ provided $\lambda \in (\lambda_{min}, \lambda^*_{max})$.
		\end{lem}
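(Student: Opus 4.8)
The statement is, word for word, the contradiction step already carried out inside the proof of Theorem \ref{thm2}, so the plan is simply to isolate that argument. Fix $\lambda\in(\lambda_{min},\lambda^*_{max})$ and $u\in\mathcal{N}_\lambda$, and argue by contradiction: assume $\det J(\nabla_u\Phi_\lambda(u)(u))=0$. First I would put ourselves in the setting of assumption \textbf{(A)}. By Corollary \ref{LP0}(ii), membership $u\in\mathcal{N}_\lambda$ forces $\lambda=r(u)$, hence $1_n\cdot u=u\in\mathcal{N}_{r(1_n\cdot u)}$, so the hypothesis of \textbf{(A)} holds at $t=1_n$. Since $\det J(\nabla_u\Phi_\lambda(u)(u))=\det J_t(\nabla_u\Phi_\lambda(t\cdot u)(t\cdot u))|_{t=1_n}=0$ and $\lambda=r(1_n\cdot u)$, assumption \textbf{(A)} gives that $t=1_n$ is an extremal point of the $C^1$ function $t\mapsto r(t\cdot u)$ on the open set $(\dot{\mathbb{R}}^+)^n$; being an interior extremum, it satisfies $\nabla_t r(t\cdot u)|_{t=1_n}=0_n$.

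The second step is to feed this into \textbf{(S0)}. Since $t=1_n$ is a critical point of $r(t\cdot u)$ with $1_n\cdot u\in\mathcal{N}_{r(1_n\cdot u)}$, alternative (i) of \textbf{(S0)} is ruled out, so alternative (ii) must hold: $\nabla_t r(t\cdot u)\equiv 0_n$ on $(\dot{\mathbb{R}}^+)^n$. Hence $r(t\cdot u)$ is constant on $(\dot{\mathbb{R}}^+)^n$ with common value $r(1_n\cdot u)=\lambda$; passing to the closure via the standing continuation hypothesis on $\tilde r$ yields $\Lambda(u)=\sup_{t\in(\mathbb{R}^+)^n}r(t\cdot u)=\lambda$. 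Therefore $\lambda^*_{max}=\inf_{w\in\mathcal{W}}\Lambda(w)\le\Lambda(u)=\lambda$, contradicting $\lambda<\lambda^*_{max}$. This proves the lemma.

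I do not anticipate any genuine obstacle, since every ingredient is already in place and the claim is a verbatim specialisation of the computation in Theorem \ref{thm2}. The only points worth flagging are minor: (a) the harmless passage from $\sup_{(\dot{\mathbb{R}}^+)^n}$ to $\sup_{(\mathbb{R}^+)^n}$, which is exactly where the boundary-continuation assumption on $r(t\cdot u)$ is used; and (b) the observation that the lower endpoint $\lambda_{min}$ enters the statement only to make the interval $(\lambda_{min},\lambda^*_{max})$ nonempty — the contradiction itself uses nothing beyond $\lambda<\lambda^*_{max}$, so one could equally well state the conclusion for every $\lambda<\lambda^*_{max}$ with $\mathcal{N}_\lambda\neq\emptyset$.
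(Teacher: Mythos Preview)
Your argument is correct and is essentially identical to the paper's own proof, which simply notes that the proof of Theorem~\ref{thm2} already contains this lemma: assume $\det J(\nabla_u\Phi_\lambda(u)(u))=0$, use \textbf{(A)} to get that $t=1_n$ is extremal (hence critical), then \textbf{(S0)} forces $r(t\cdot u)\equiv\lambda$, which contradicts $\lambda<\lambda^*_{max}\le\Lambda(u)$. Your side remarks on the role of $\lambda_{min}$ and the boundary continuation are accurate and match how the paper uses these hypotheses.
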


\begin{rem}\label{RExtr}
In the present work we do not deal with the applicability of Nehari manifolds method at its  extremal points like $\lambda_{max}^*$, $\lambda_{max}$	or $\lambda_{min}$. This is a subject of another work. 
\end{rem}

Arguing as in the proof of Proposition \ref{prop2}, it can be proved 
\begin{cor}\label{Csman}
Suppose   {\rm \bf (A)}, {\rm\bf (S)} {\rm({\bf (S0)})}  hold and $\lambda^*_{min} < \lambda^*_{max} $ $(\lambda_{min} < \lambda^*_{max}) $. Then $\mathcal{N}_\lambda$  is a $C^1$-manifold provided $\lambda \in (\lambda^*_{min}, \lambda^*_{max})$ $(\lambda \in (\lambda_{min}, \lambda^*_{max}))$.
\end{cor}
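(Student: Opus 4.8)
The plan is to mimic the argument behind Proposition \ref{prop2}, showing that near any point of $\mathcal{N}_\lambda$ one may solve the defining constraint $\nabla_u\Phi_\lambda(u)(u)=0$ by the Implicit Function Theorem, the key ingredient being the nonvanishing of the determinant of the relevant Jacobian, which is exactly what Lemma \ref{lemthm1} (resp.\ Lemma \ref{ldet}) supplies on the parameter range $(\lambda^*_{min},\lambda^*_{max})$ (resp.\ $(\lambda_{min},\lambda^*_{max})$). First I would fix $\lambda$ in the stated interval and pick an arbitrary $u_0\in\mathcal{N}_\lambda$; note $u_0\in\dot W$ by definition of $\mathcal{N}_\lambda$. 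Writing $u_0=t_0\cdot v_0$ with $t_0=1_n\|u_0\|$ and $v_0=u_0/\|u_0\|\in S$, one has $(t_0,v_0)\in\tilde{\mathcal{N}}_\lambda$ and, by Proposition \ref{propInvar} (or the direct computation preceding Proposition \ref{prop2}), condition \eqref{Cond1} is equivalent to condition \eqref{Cond} for $u_0$; the latter holds by Lemma \ref{lemthm1} under hypotheses {\bf (A)}, {\bf (S)}, and by Lemma \ref{ldet} under {\bf (A)}, {\bf (S0)}.

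Next I would invoke Proposition \ref{prop2}: since \eqref{Cond1} holds at $(t_0,v_0)$, there is a neighbourhood $U(v_0)\subset S$ and a unique $C^1$-map $t:U(v_0)\to(\dot{\mathbb{R}}^+)^n$ with $t(v_0)=t_0$ and $(t(v),v)\in\tilde{\mathcal{N}}_\lambda$ for every $v\in U(v_0)$. The map $v\mapsto t(v)\cdot v$ is then a $C^1$-map from $U(v_0)$ into $\dot W$ whose image lies in $\mathcal{N}_\lambda$, it sends $v_0$ to $u_0$, and its differential at $v_0$ is injective (it is a graph-type map over the tangent space of $S$ at $v_0$, since the $S$-component is just $v$). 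Hence it is a $C^1$-parametrization of a neighbourhood of $u_0$ in $\mathcal{N}_\lambda$. To turn this into a genuine manifold chart one should also exhibit a local defining function: since $\nabla_u\Phi_\lambda(\cdot)(\cdot)$ is $C^1$ near $u_0$ (by the standing assumption $\nabla_t\tilde\Phi_\lambda\in C^1$) and its derivative in the radial/fiber directions is nondegenerate by \eqref{Cond}, the constraint map has surjective differential at $u_0$ and $\mathcal{N}_\lambda$ is cut out transversally; this gives the $C^1$-manifold structure in a neighbourhood of $u_0$. Since $u_0\in\mathcal{N}_\lambda$ was arbitrary, $\mathcal{N}_\lambda$ is a $C^1$-manifold.

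The main obstacle I anticipate is purely bookkeeping rather than conceptual: one must be careful that the hypotheses indeed force \eqref{Cond} at \emph{every} $u_0\in\mathcal{N}_\lambda$ and not merely at minimizers — but this is precisely the content of Lemma \ref{lemthm1}/Lemma \ref{ldet}, whose statements quantify over all $u\in\mathcal{N}_\lambda$, so the argument goes through verbatim. A secondary technical point is to phrase the transversality/defining-function step correctly in the Banach-space setting (the normal direction being the one-dimensional fiber direction $t\mapsto t\cdot u$), but this is exactly the structure already exploited in Lemma \ref{lem1f} and Proposition \ref{prop2}, so no new difficulty arises. The case analysis between {\bf (S)} and {\bf (S0)} is handled uniformly once the appropriate determinant lemma is cited.
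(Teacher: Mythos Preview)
Your approach is the paper's own: the corollary is stated with the preamble ``Arguing as in the proof of Proposition \ref{prop2}, it can be proved'', and you correctly identify that the nonvanishing of $\det J(\nabla_u\Phi_\lambda(u)(u))$ at \emph{every} point of $\mathcal{N}_\lambda$---supplied by Lemma \ref{lemthm1} under {\bf (A)},{\bf (S)} and by Lemma \ref{ldet} under {\bf (A)},{\bf (S0)}---is exactly what lets the Implicit Function Theorem apply to the constraint map, so that $\mathcal{N}_\lambda$ is cut out transversally as a $C^1$-manifold.

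One small correction to your first route: for $n\geq 2$ the map $v\mapsto t(v)\cdot v$ from $U(v_0)\subset S$ into $\mathcal{N}_\lambda$ is \emph{not} an immersion. The parametrization $(t,v)\mapsto t\cdot v$ of $\dot W$ by $(\dot{\mathbb{R}}^+)^n\times S$ has $(n{-}1)$-dimensional fibers (given $u\in\dot W$, the equations $t_i v_i=u_i$, $\sum_i\|u_i\|_{W_i}/t_i=1$ admit an $(n{-}1)$-parameter family of solutions in $t$), and the $S$-projection of $t(v)\cdot v$ is $t(v)\cdot v/\|t(v)\cdot v\|$, which equals $v$ only when $t(v)$ is a scalar multiple of $1_n$. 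So the ``graph over $S$'' picture does not directly yield a chart for $\mathcal{N}_\lambda$. Your second argument is the one that works and is the intended reading of ``as in Proposition \ref{prop2}'': the $\mathbb{R}^n$-valued constraint $\Psi(u)=\nabla_u\Phi_\lambda(u)(u)$ is $C^1$ on $\dot W$, and its differential at $u_0$ restricted to the $n$ fiber directions $\partial_{t_i}(t\cdot u_0)|_{t=1_n}$ is precisely the nonsingular matrix $J(\nabla_u\Phi_\lambda(u_0)(u_0))$, so $D\Psi(u_0)$ is onto $\mathbb{R}^n$ and the regular-value/implicit-function theorem gives the local $C^1$-manifold structure.
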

 
Let us stress that 
$$
\Lambda^{sc}(u):=\sup_{t\in  \mathbb{R}^+}\, r(tu)\leq \sup_{t\in  (\mathbb{R}^+)^n}\, r(t\cdot u)=:\Lambda(u),
$$
and therefore,
\begin{equation}\label{nerav}
	\lambda^{sc,*}_{max}:=\inf_{u \in \mathcal{W}}\,\Lambda^{sc}(u)\leq \inf_{u \in \mathcal{W}}\,\Lambda(u)=:\lambda^*_{max}. 
\end{equation}
Similarly,
\begin{equation}\label{nerav2}
\lambda^{sc}_{max}\leq \lambda_{max},~	\lambda_{min}\leq \lambda^{sc}_{min},~ \lambda^*_{min}\leq \lambda^{sc,*}_{min}.
\end{equation}

\begin{rem} \label{RemInv1}
It is obviously that the assumptions {\bf (S)},  {\bf (S0)} and the  definition of the extremal values $\lambda_{min}, \lambda_{max}, \lambda^*_{min},\lambda^*_{max}$ etc. do not depend on of the change of variables as to Remark \ref{RemInv}.
\end{rem}

\section{Extreme values of NMM   in explicit  variational forms}
 
In this section,  applying the above theory we present some examples where the extreme values of NMM can be expressed  in an explicit  variational form.

\medskip

\noindent
 \emph{ \bf Example 1. (Problem with indefinite nonlinearity)}

\medskip

Consider the following
boundary value problem with indefinite nonlinearity
\begin{equation}
\label{pb1} \left\{
\begin{array}{l}
\ -\Delta_p u=\lambda
|u|^{p-2}u+f(x)|u|^{\gamma-2}u~~\mbox{in}~~\Omega, \\ 
~~~u=0~~\mbox{on}~~\partial\Omega,
%
\end{array}
\right.
\end{equation}
where $\Omega$ is a bounded domain in $\mathbb{R}^N$ with smooth
boundary; $\lambda\in \mathbb{R}$; $\Delta_p(\cdot):=\mbox{div}(|\nabla (\cdot)|^{p-2}\nabla (\cdot))$ is the $p$-Laplacian;   $f\in L^{\infty}(\Omega)$ and we assume 
\begin{equation}
1<p<\gamma \leq p^*, ~\mbox{where }~ p^*=
\left\{
\begin{array}{ll}
\frac{pN}{N-p} & \mbox{ if }~ p<N,\\
+\infty  & \mbox{ if }~p\geq N. \label{200}
\end{array}
\right.
\end{equation}
Subsequently, $W:=W^{1,p}_0(\Omega)$ denotes the standard Sobolev space  with the norm
$$
||u||_1=(\int_{\Omega} |\nabla u|^p\,dx)^{1/p}.
$$
By a solution of \eqref{pb1} we shall  mean a weak solution $u \in W:= W^{1,p}_0(\Omega)$. In what follows, $\lambda_1:=\lambda_{1,p}$, $\phi_1:=\phi_{1,p}$ denote the first eigenpair of the operator $-\Delta_p$ in $\Omega$ with zero boundary conditions. It is known that the eigenvalue $\lambda_1$ is positive, simple and isolated, the corresponding eigenfunction  $\phi_1$ is positive and it can be normalized so that $||\phi_1||_1=1$ \cite{Anan, DrabekT, lindqvist}. In the case when $f$ may change the sign in $\Omega$, the nonlinearity  in right hand side of \eqref{pb1} is called indefinite in sign (cf. \cite{Alam, BCDN}). 

	The boundary value problems with indefinite nonlinearity have been studied in a number of papers (see e.g. \cite{Alam, BCDN, Bozhkov, DrabPoh, ilSari, ilrunst, ilEg, Ou2}). An important role in these studies plays the following extremal value
\begin{equation}
\lambda^*=\inf\{\frac{\int_\Omega |\nabla u|^p dx }{\int_\Omega
|u|^{p}dx}:~~\int_\Omega
f(x)|u|^{\gamma}dx\geq  0,~u \in W\setminus 0\}, \label{Ouyang}		
\end{equation}
which, as far as we know, was first found by Ouyang \cite{Ou2}.

Note that
$$
	\lambda^*>\lambda_1~~\mbox{if and only if}~~\int_\Omega
f(x)|\phi_1|^{\gamma}dx< 0.
$$
Let us show that \eqref{Ouyang} can be obtained by applying the method of NG-Rayleigh's quotient.

\begin{lem}
	$\lambda^*$ is the extreme value of NMM, namely it coincides with \eqref{lammax}, i.e. $\lambda^*=\lambda^*_{max}$.
\end{lem}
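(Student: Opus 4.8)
The plan is to identify problem \eqref{pb1} with the abstract framework by setting $T(u) = \frac{1}{p}\int_\Omega |\nabla u|^p\,dx$, $G(u) = \frac{1}{p}\int_\Omega |u|^p\,dx$, and $\Phi_\lambda(u) = T(u) - \lambda G(u) - \frac{1}{\gamma}\int_\Omega f(x)|u|^\gamma\,dx$, so that \eqref{V1} (with $n=1$ and an extra $u$-dependent term) is the weak form of \eqref{pb1}. Here the relevant quantity governing the fibering is $D_uG(u)(u) = \int_\Omega |u|^p\,dx > 0$ for all $u \in W\setminus 0$, so $\mathcal{W} = W\setminus 0$. The NG-Rayleigh quotient corresponding to the equation is obtained by testing \eqref{pb1} by $u$ and solving for $\lambda$:
\begin{equation*}
r(u) = \frac{\int_\Omega |\nabla u|^p\,dx - \int_\Omega f(x)|u|^\gamma\,dx}{\int_\Omega |u|^p\,dx},\qquad u \in W\setminus 0.
\end{equation*}
(If the abstract setup of the excerpt is read strictly with $T,G$ only, one instead absorbs the $f$-term into $T$; either way the same $r$ appears.) First I would compute the scalar fibered quotient explicitly. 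Writing $A(u) = \int_\Omega|\nabla u|^p\,dx$, $B(u) = \int_\Omega|u|^p\,dx$, $C(u) = \int_\Omega f(x)|u|^\gamma\,dx$, one has
\begin{equation*}
\tilde r^{sc}(t,u) = r(tu) = \frac{t^p A(u) - t^\gamma C(u)}{t^p B(u)} = \frac{A(u)}{B(u)} - t^{\gamma - p}\frac{C(u)}{B(u)},\qquad t>0.
\end{equation*}

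The key step is then to analyze $\sup_{t>0} r(tu)$ for each fixed $u$, which is elementary because $t \mapsto t^{\gamma-p}$ is strictly increasing on $(0,\infty)$ with $\gamma > p$. If $C(u) > 0$, then $r(tu) \to -\infty$ as $t\to\infty$ and $r(tu) \to A(u)/B(u)$ as $t\to 0^+$, so $\Lambda^{sc}(u) = \sup_{t>0} r(tu) = A(u)/B(u)$ (a supremum, not attained). If $C(u) \le 0$, then $r(tu)$ is nondecreasing in $t$ and $r(tu) \to +\infty$ as $t\to\infty$ (when $C(u)<0$) or $r(tu) \equiv A(u)/B(u)$ (when $C(u)=0$), so $\Lambda^{sc}(u) = +\infty$ unless $C(u)=0$, in which case $\Lambda^{sc}(u) = A(u)/B(u)$. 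Taking the infimum over $u \in W\setminus 0$, the only $u$ contributing finite values are exactly those with $C(u) = \int_\Omega f(x)|u|^\gamma\,dx \ge 0$ — note $C(u)>0$ and $C(u)=0$ both give $\Lambda^{sc}(u) = A(u)/B(u)$, and $C(u)<0$ gives $+\infty$, which is irrelevant to the infimum. Hence
\begin{equation*}
\lambda^{sc,*}_{max} = \inf_{u\in W\setminus 0}\Lambda^{sc}(u) = \inf\Big\{\frac{A(u)}{B(u)} : C(u) \ge 0,\ u\in W\setminus 0\Big\} = \lambda^*,
\end{equation*}
which is precisely \eqref{Ouyang}. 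Since $n=1$ here, the scalar and vector constructions coincide, so $\lambda^*_{max} = \lambda^{sc,*}_{max} = \lambda^*$, and to match the statement's reference to \eqref{lammax} I would note $\Lambda(u) = \Lambda^{sc}(u)$ for all $u$ in the scalar case.

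The main obstacle — and the only nontrivial point beyond bookkeeping — is handling the distinction between the cases $C(u) > 0$ and $C(u) = 0$ when identifying which $u$ enter the infimum, and making sure the passage from "$\sup$ over $t$" to "$\inf$ over $u$" does not lose or gain elements relative to the constraint set $\{C(u)\ge 0\}$ in \eqref{Ouyang}; I expect this to be a short case check. A secondary point is the remark $\lambda^* > \lambda_1 \iff \int_\Omega f|\phi_1|^\gamma\,dx < 0$: I would obtain "$\ge$" from testing \eqref{Ouyang} with $u = \phi_1$ (which satisfies $A(\phi_1)/B(\phi_1) = \lambda_1$ provided $C(\phi_1)\ge 0$), and strictness from simplicity and isolation of $\lambda_1$ together with the variational characterization $\lambda_1 = \inf A/B$; this is not needed for the displayed identity $\lambda^* = \lambda^*_{max}$ itself but fits naturally alongside it. Finally, I would remark that verifying assumptions \textbf{(A)} and \textbf{(S)} (indeed \textbf{(S0)}) for this example is immediate from the explicit form of $r(tu)$: for each $u$ the function $t\mapsto r(tu)$ is either strictly monotone (no critical point) or constant, which is exactly Remark \ref{RemScal3}'s shape condition or case (iii) of Remark \ref{R2}.
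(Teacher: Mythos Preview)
Your proposal is correct and follows essentially the same route as the paper: you compute the same NG-Rayleigh quotient, obtain the same explicit fibered form $r(tu)=A(u)/B(u)-t^{\gamma-p}C(u)/B(u)$, and carry out the same case analysis on the sign of $C(u)$ to identify $\Lambda(u)$ and then $\lambda^*_{max}$. The paper's proof is slightly terser (it does not separate $C(u)>0$ from $C(u)=0$ explicitly), and your added remarks on \textbf{(A)}, \textbf{(S0)}, and the equivalence $\lambda^*>\lambda_1\iff\int f|\phi_1|^\gamma\,dx<0$ match material the paper places just after the lemma rather than inside the proof, but the substance is the same.
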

\begin{proof}
Let
\begin{equation}\label{pb111}
\Phi_\lambda (u) = \frac{1}{p} \int |\nabla u|^{p} dx - \lambda\frac{1}{p} \int |u|^{p} dx -
\frac{1}{\gamma} \int f|u|^{\gamma} dx,~u \in W.
\end{equation}
Evidently, $\Phi_\lambda \in  C^1(W, \mathbb{R})$, $\partial_t \tilde{\Phi}_\lambda \in C^1(\mathbb{R}^+ \times W, \mathbb{R})$. Note that $\Phi_\lambda \not\in  C^2(W\setminus 0, \mathbb{R})$ if $1<p<2$.

Consider the corresponding Nehari manifold minimization problem 
\begin{align}
		\begin{cases}\label{N1ind}
	&\frac{1}{p} \int |\nabla u|^{p} dx - \lambda\frac{1}{p} \int |u|^{q} dx -
\frac{1}{\gamma} \int f|u|^{\gamma} dx \to min\\	\\
	&  \int |\nabla u|^{p} dx - \lambda \int |u|^{p} dx -
 \int f|u|^{\gamma} dx=0,~~u\in W\setminus 0.
	\end{cases}
	\end{align}
Consider the  NG-Rayleigh's quotient corresponding to \eqref{pb1}
\begin{equation}\label{rind}
	r(u)=\frac{\int_\Omega |\nabla u|^p dx -\int_\Omega
f(x)|u|^{\gamma}dx}{\int_\Omega
|u|^{p}dx},~~u \in W\setminus 0
\end{equation}
Let $u \in W\setminus 0$. Observe, for $t>0$
\begin{equation}
	\label{rindt}
	r(t u)=\frac{\int_\Omega |\nabla u|^p dx }{\int_\Omega
	|u|^{p}dx}- t^{\gamma-p}\frac{\int_\Omega
	f(x)|u|^{\gamma}dx}{\int_\Omega
	|u|^{p}dx}.
\end{equation}
Hence, 
\begin{equation*}
\Lambda(u)=\sup_{t>0}\, r(tu)=\left\{
\begin{array}{l}
\ \frac{\int_\Omega |\nabla u|^p dx }{\int_\Omega
|u|^{p}dx}, ~~\mbox{if}~~\int_\Omega f(x)|u|^{\gamma}dx\geq 0, \\ \\
~~+\infty, ~~\mbox{if}~~\int_\Omega
f(x)|u|^{\gamma}dx< 0,
%
\end{array}
\right.
\end{equation*}
and 
\begin{align*}
	\lambda(u)=	\inf_{t>0}\, r(tu)=\begin{cases}
	&  -\infty,~~\mbox{if}~~\int_\Omega f(x)|u|^{\gamma}dx> 0, \\ \\
&\frac{\int_\Omega |\nabla u|^p dx }{\int_\Omega
|u|^{p}dx}, ~~\mbox{if}~~\int_\Omega
f(x)|u|^{\gamma}dx\leq  0.
	\end{cases}
	\end{align*}
Consequently, we obtain for extreme values \eqref{lammin}, \eqref{lammax} the following explicit variational forms
\begin{align}
	\lambda^*_{min}&=\sup\{\frac{\int_\Omega |\nabla u|^p dx }{\int_\Omega
|u|^{p}dx}:~~\int_\Omega
f(x)|u|^{\gamma}dx\leq  0,~u \in W\setminus 0\}, \nonumber\\ 
	\lambda^*_{max}&=\inf\{\frac{\int_\Omega |\nabla u|^p dx }{\int_\Omega
|u|^{p}dx}:~~\int_\Omega
f(x)|u|^{\gamma}dx\geq  0,~u \in W\setminus 0\}. \label{Ouyang2}
\end{align}
Thus we see that \eqref{Ouyang2} coinsides with \eqref{Ouyang}. 
\end{proof}

It is easily to see that $r(tu)$ has only extremal point at $t=0$ or in the case $\int_\Omega
	f(x)|u|^{\gamma}dx=0$,  $r(tu) \equiv \frac{\int_\Omega |\nabla u|^p dx }{\int_\Omega
	|u|^{p}dx}$ for all $t\geq 0$. Thus  condition {\bf (S0)} is satisfied and consequently {\bf (A)} holds (see Remark \ref{RemScal2}). 
	
Observe,  
$\lambda^*_{min}=-\infty$ if  $\int_\Omega
f(x)|u|^{\gamma}dx>0$ for all $u \in W\setminus 0$, and $\lambda^*_{min}=+\infty$ if
the set $\{x \in \Omega: ~f(x)\leq 0\}$ contains an open domain up to a subset of Lebesgue measure  zero.  Note that  when  $\int_\Omega
f(x)|u|^{\gamma}dx>0$ for all $u \in W\setminus 0$ we have $\lambda^*_{max}=\lambda_1$.
Thus, we have a strong inequality $\lambda^*_{min}<\lambda^*_{max}=\lambda_1$ only if $f>0$ a.e. in $\Omega$, and therefore only in this case we may apply Theorem \ref{thm1}.

However, for another extremal value of NMM $\lambda_{min}:=\inf_{u \in W\setminus 0}\lambda(u)$ we have $\lambda_{min}=-\infty$ if  $\int_\Omega
f(x)|u|^{\gamma}dx>0$ for some $u \in W\setminus 0$, and $\lambda_{min}= \lambda_1$, $\lambda^*_{max}=+\infty$  if  $\int_\Omega f(x)|u|^{\gamma}dx\leq 0$ for all $u \in W\setminus 0$. Thus we always have  $\lambda_{min}<\lambda^*_{max}$. Thus Theorem \ref{thm2} yields
\begin{lem}
Assume $1<p<\gamma \leq {p}^*$ and $\lambda\in (\lambda_{min},\lambda^*_{max})$. Then any solution $u_\lambda$ of \eqref{N1ind} satisfies  equation \eqref{pb1}.
\end{lem}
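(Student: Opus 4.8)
The statement follows by applying Theorem~\ref{thm2} to the scalar Nehari manifold problem~\eqref{N1ind}, so the plan is essentially to check that all the hypotheses of that theorem are met for the functional $\Phi_\lambda$ of~\eqref{pb111}. First I would record the set-up: \eqref{N1ind} is precisely the scalar Nehari minimization problem~\eqref{N11KW} associated with $\Phi_\lambda$, which by Remark~\ref{VSc} may be treated as the case $n=1$ of~\eqref{N11}. Here $G(u)=\frac{1}{p}\int|u|^p\,dx$, so $D_uG(u)(u)=\int|u|^p\,dx>0$ for every $u\in W\setminus 0$; hence $\mathcal{W}=W\setminus 0$, the standing assumptions of Section~3 on $r$ hold trivially (in particular $\lim_{t\to 0^+}r(tu)$ exists by~\eqref{rindt}), and the relevant NG-Rayleigh quotient is~\eqref{rind}. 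The regularity required by Theorem~\ref{thm2}, namely $\Phi_\lambda\in C^1(W\setminus 0,\mathbb{R})$ and $\partial_t\tilde{\Phi}_\lambda\in C^1(\dot{\mathbb{R}}^+\times(W\setminus 0),\mathbb{R})$, is already noted after~\eqref{pb111}.

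Next I would verify assumptions~{\bf (A)}, {\bf (S0)} and the ordering $-\infty\le\lambda_{min}<\lambda^*_{max}$. By~\eqref{rindt}, for each $u\in W\setminus 0$ one has $r(tu)=c_1(u)-t^{\gamma-p}c_2(u)$ with $c_1(u)=\int|\nabla u|^p\,dx/\int|u|^p\,dx>0$ and $c_2(u)=\int f|u|^{\gamma}\,dx/\int|u|^p\,dx$; since $\gamma>p$, this function is strictly monotone on $\dot{\mathbb{R}}^+$ whenever $c_2(u)\ne 0$ (hence has no critical point there) and is the constant $c_1(u)$ on $\mathbb{R}^+$ when $c_2(u)=0$. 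This is exactly condition~{\bf (S0)} in its scalar form (Remark~\ref{R2}), and by Remark~\ref{RemScal2} it implies~{\bf (A)}. The inequality $-\infty\le\lambda_{min}<\lambda^*_{max}$ is in effect already established in the discussion preceding the lemma, from the explicit formulas for $\lambda(u)=\inf_{t>0}r(tu)$ and $\Lambda(u)=\sup_{t>0}r(tu)$: if $\int f|u|^{\gamma}\,dx>0$ for some $u\in W\setminus 0$ then $\lambda_{min}=-\infty$, and otherwise $\int f|u|^{\gamma}\,dx\le 0$ for all $u$, whence $\lambda_{min}=\lambda_1<\lambda^*_{max}$.

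Having checked these points, I would conclude by invoking Theorem~\ref{thm2}: for $\lambda\in(\lambda_{min},\lambda^*_{max})$ every solution (local minimizer) $u_\lambda$ of~\eqref{N1ind} satisfies the Euler--Lagrange equation~\eqref{V1} for $\Phi_\lambda$, that is,
\[
\int_\Omega|\nabla u_\lambda|^{p-2}\nabla u_\lambda\cdot\nabla v\,dx-\lambda\int_\Omega|u_\lambda|^{p-2}u_\lambda\,v\,dx-\int_\Omega f|u_\lambda|^{\gamma-2}u_\lambda\,v\,dx=0\qquad\forall\,v\in W,
\]
which is precisely the weak formulation of~\eqref{pb1}; hence $u_\lambda$ solves~\eqref{pb1}. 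There is no serious obstacle here --- the substantive work ({\bf (S0)}, the formulas for $\lambda(u),\Lambda(u)$) was done in setting up the example --- but one point does require care and is worth stressing: since $\Phi_\lambda\notin C^2(W\setminus 0,\mathbb{R})$ in the subquadratic range $1<p<2$, the whole argument (the Lagrange-multiplier step of Lemma~\ref{lem1KW}, and the manifold and ``second-variation'' bookkeeping behind Theorem~\ref{thm2}) must be run in the $C^1$ framework of Sections~2 and~3; this is legitimate because $t\mapsto\Phi_\lambda(tu)$ is a polynomial in $t$ whose coefficients are of class $C^1$ in $u$, so $\partial_t\tilde{\Phi}_\lambda$ is indeed $C^1$.
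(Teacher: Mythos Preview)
Your proposal is correct and follows essentially the same route as the paper: the paper's own proof is simply the one-line invocation ``Thus Theorem~\ref{thm2} yields'' after having checked, in the paragraphs immediately preceding the lemma, exactly the hypotheses you spell out---{\bf (S0)} from the explicit form~\eqref{rindt}, {\bf (A)} via Remark~\ref{RemScal2}, and the inequality $\lambda_{min}<\lambda^*_{max}$ from the case analysis of $\lambda(u)$ and $\Lambda(u)$. Your write-up is more explicit (and adds the useful caveat about the $C^1$ framework when $1<p<2$), but the argument is the same.
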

The proof of the existence of the solution of  Nehari manifold minimization problem \eqref{N1ind} when $\lambda\in  (\lambda_{min},\lambda^*_{max})$ can be found in \cite{Alam,Ou2} (for $\gamma<p^*$, $p=2$), in  \cite{DrabPoh, ilIzv, ilrunst, ilrunst2} ( for $1<p<+\infty$ ) and in  \cite{ilEg} (for $\gamma \leq {p}^*$ ).   
Furthermore, it can be proved that if $\lambda \in (\lambda_1, \lambda^*_{max})$ then \eqref{N1ind} has two  nonnegative solutions (see e.g.  \cite{Alam, Ou2, ilIzv}).

\medskip

\noindent
\emph{ \bf Example 2. (Problem with convex-concave nonlinearity)}

\medskip

Consider the following problem with convex-concave nonlinearity
\begin{equation}
\label{pb} \left\{
\begin{array}{l}
\
-\Delta_p u = \lambda |u|^{q-2}u+f(x)|u|^{\gamma-2}u,
~ x \in \Omega, \\ \\
\hspace*{0.2cm} u|_{\partial \Omega} = 0,
\end{array}
\right.
\end{equation}
where   $\Omega$ is a bounded domain in $\mathbb{R}^N$,
$N\geq 1$, with smooth boundary $\partial \Omega$,
$\Delta_p$ is the p-Laplacian and we assume that $1<q<p<\gamma \leq p^*$.

We always suppose that $f(x) \geq 0$ on $\Omega$ and $f \in L^d(\Omega)$, where $d>p^*/(p^*-\gamma)$ if $p<N$ and $\gamma<p^*$; $d=+\infty$ if $p<N$ and $\gamma=p^*$; $d>1$ if $p\geq N$.
By a solution of \eqref{200} we shall mean a weak solution $u \in  W^{1,p}_0(\Omega)$.

The investigation of the problems with convex-concave type nonlinearity similar to \eqref{pb} can be found in various papers
(see e.g. \cite{AmAzPer, AmBrCer, ilconcan, KentWu, LiWang}). In particular, in \cite{ilconcan} has been found using the so-called  spectral analysis by the fibering method the following extremal value of NMM
\begin{eqnarray}
\lambda^*= 
\frac{\gamma-p}{p-q} \left( \frac{p-q}{\gamma-q}
\right)^{\frac{\gamma-q}{\gamma-p}}  \inf_{u \in W\setminus 0}	 
\left( \frac{(\int|\nabla u|^{p} dx)^{\frac{\gamma-q}{\gamma-p}}}{(\int |u|^{q} dx) \,
(\int f|u|^{\gamma} dx)^{\frac{p-q}{\gamma-p}}  } \right).\label{LMaxN}
\end{eqnarray}  
Let us show that \eqref{LMaxN} can be obtained also by applying the method of NG-Rayleigh's quotient.
\begin{lem}
	$\lambda^*$ is the extreme value of NMM, namely it coincides with \eqref{lammax}, i.e. $\lambda^*=\lambda^*_{max}$ .
\end{lem}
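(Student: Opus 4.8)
The plan is to follow the pattern of Example~1 verbatim: write down the energy functional, form the NG-Rayleigh quotient, fiber it, and identify $\Lambda(u)=\sup_{t>0}r(tu)$ with the quantity appearing inside the infimum in \eqref{LMaxN}. First I would take
\[
\Phi_\lambda(u)=\frac1p\int_\Omega|\nabla u|^p\,dx-\frac{\lambda}{q}\int_\Omega|u|^q\,dx-\frac1\gamma\int_\Omega f|u|^\gamma\,dx,\qquad u\in W:=W^{1,p}_0(\Omega),
\]
note that $\Phi_\lambda\in C^1(W,\mathbb R)$ and $\partial_t\tilde\Phi_\lambda\in C^1(\mathbb R^+\times W,\mathbb R)$, and test \eqref{pb} with $u$, then solve for $\lambda$, to get
\[
r(u)=\frac{\int_\Omega|\nabla u|^p\,dx-\int_\Omega f|u|^\gamma\,dx}{\int_\Omega|u|^q\,dx},\qquad u\in W\setminus0.
\]
Since $D_uG(u)(u)=\int_\Omega|u|^q\,dx>0$ for $u\neq0$ we have $\mathcal W=W\setminus0$, and $r(tu)\to0$ as $t\to0^+$, so the standing continuation hypothesis on $\tilde r^{sc}$ is satisfied.

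Next I would fiber and compute $\Lambda(u)$. Setting $A=\int_\Omega|\nabla u|^p\,dx>0$, $B=\int_\Omega|u|^q\,dx>0$ and $C=\int_\Omega f|u|^\gamma\,dx\ge0$ (here $f\ge0$ is used), one has $r(tu)=B^{-1}\bigl(At^{p-q}-Ct^{\gamma-q}\bigr)$ with $0<p-q<\gamma-q$ because $1<q<p<\gamma$. If $C=0$ the map $t\mapsto r(tu)$ is strictly increasing to $+\infty$, so $\Lambda(u)=+\infty$. If $C>0$ then $r(tu)$ starts at $0$, tends to $-\infty$ as $t\to\infty$, and $\partial_t r(tu)=0$ has the unique positive root $t_u=\bigl(A(p-q)/(C(\gamma-q))\bigr)^{1/(\gamma-p)}$, which is the point of global maximum. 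Substituting $t_u$ and simplifying (the algebra collapses to the identity $1+\frac{p-q}{\gamma-p}=\frac{\gamma-q}{\gamma-p}$) gives
\[
\Lambda(u)=\frac{\gamma-p}{p-q}\Bigl(\frac{p-q}{\gamma-q}\Bigr)^{\frac{\gamma-q}{\gamma-p}}\frac{\bigl(\int_\Omega|\nabla u|^p\,dx\bigr)^{\frac{\gamma-q}{\gamma-p}}}{\bigl(\int_\Omega|u|^q\,dx\bigr)\bigl(\int_\Omega f|u|^\gamma\,dx\bigr)^{\frac{p-q}{\gamma-p}}}.
\]

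Finally, the $u$ with $C=0$ contribute $+\infty$ and hence are irrelevant to the infimum, so by \eqref{lammax} we get $\lambda^*_{max}=\inf_{u\in W\setminus0}\Lambda(u)$, which is precisely \eqref{LMaxN}. Along the way I would also record that $r(tu)$ has at most one critical point, which is then a global maximum; thus {\bf (S)} and {\bf (A)} hold, so that Theorem~\ref{thm1} applies on $(\lambda^*_{min},\lambda^*_{max})$. The only genuinely fiddly step is the exponent bookkeeping in evaluating $r(t_uu)$: one first lands on the constant $\frac{\gamma-p}{\gamma-q}\bigl(\frac{p-q}{\gamma-q}\bigr)^{(p-q)/(\gamma-p)}$ and must recognize it as $\frac{\gamma-p}{p-q}\bigl(\frac{p-q}{\gamma-q}\bigr)^{(\gamma-q)/(\gamma-p)}$; apart from that, the argument is the same $C=0$ versus $C>0$ dichotomy used in Example~1.
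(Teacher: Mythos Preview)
Your proposal is correct and follows essentially the same route as the paper: write the fibered NG-Rayleigh quotient $r(tu)=B^{-1}(At^{p-q}-Ct^{\gamma-q})$, locate its unique critical point $t_u$, substitute, and identify the resulting expression with the integrand in \eqref{LMaxN}. Your explicit treatment of the degenerate case $C=\int f|u|^\gamma=0$ (giving $\Lambda(u)=+\infty$, hence irrelevant to the infimum) is a bit more careful than the paper, which tacitly assumes $C>0$ when writing $t_{u,\max}$; otherwise the arguments coincide.
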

\begin{proof} 
Let 
\begin{equation}\label{pb11}
\Phi_\lambda (u) = \frac{1}{p} \int |\nabla u|^{p} dx - \lambda\frac{1}{q} \int |u|^{q} dx -
\frac{1}{\gamma} \int|u|^{\gamma} dx.
\end{equation}
The  Nehari manifold minimization problem for \eqref{pb} is given by
\begin{align}
		\begin{cases}\label{N1cc}
	&\frac{1}{p} \int |\nabla u|^{p} dx - \lambda\frac{1}{q} \int |u|^{q} dx -
\frac{1}{\gamma} \int|u|^{\gamma} dx \to min\\	\\
	&  \int |\nabla u|^{p} dx - \lambda \int |u|^{q} dx -
 \int f|u|^{\gamma} dx=0,~~u\in \dot{W},
	\end{cases}
	\end{align}
and the corresponding NG-Rayleigh's quotient is 
\begin{equation}\label{lamb1}
	r(u)=\frac{\int |\nabla u|^{p} dx-\int |u|^{\gamma} dx}{\int f|u|^{q} dx}.
\end{equation}
Observe for $u \in W\setminus 0$,  $t>0$
\begin{equation}\label{RaylCC}
	r(t u)=\frac{t^{p-q}\int|\nabla u|^{p} dx-t^{\gamma-q}\int f|u|^{\gamma} dx}{\int |u|^{q} dx}.
\end{equation}
Compute
\begin{equation}
	\frac{\partial}{\partial t} r(t u)=\frac{(p-q)t^{p-q-1}\int|\nabla u|^{p} dx-(\gamma-q)t^{\gamma-q-1}\int f|u|^{\gamma} dx}{\int |u|^{q} dx}
\end{equation}
Hence,  $\frac{\partial}{\partial t} r(t u)=0$ if and only if 
$$
(p-q)t^{p-q-1}\int|\nabla u|^{p} dx-(\gamma-q)t^{\gamma-q-1}\int f|u|^{\gamma} dx=0.
$$
The only solution of this equation is 
\begin{equation}\label{tmaxcc}
	t_{u,max}=\left(\frac{(p-q)\int|\nabla u|^{p} dx}{(\gamma-q)\int f |u|^{\gamma} dx}\right)^\frac{1}{\gamma-p}.  
\end{equation}
The substituting  $t_{u,max}$ into $r(t u)$ yields
\begin{equation}\label{LambCc}
	\Lambda(u)=r(t_{u,max} u)=\frac{\gamma-p}{p-q} \left( \frac{p-q}{\gamma-q}
	\right)^{\frac{\gamma-q}{\gamma-p}} 
	\left( \frac{(\int|\nabla u|^{p} dx)^{\frac{\gamma-q}{\gamma-p}}}{(\int |u|^{q} dx) \,
	(\int |u|^{\gamma} dx)^{\frac{p-q}{\gamma-p}}  } \right)
\end{equation}
Thus,  indeed $\lambda^*_{max}=\inf_{u \in \dot{W}}\,\Lambda(u)$ coincides with \eqref{LMaxN}. 
\end{proof}

Obviously, conditions {\bf (A)} and  {\bf (S)} hold (see Remark \ref{RemScal1}). 
It is not hard to show using Sobolev's imbedding theorem (see \cite{ilconcan}) that $\lambda^*_{max}>0$.  Notice  
$$
\lambda(u)=\inf_{t>0}r(t u)=-\infty, ~~~\forall u \in W\setminus 0,
$$
and therefore $\lambda_{\min}=\lambda^*_{\min}=-\infty$. 
Thus  Theorem \ref{thm1} yields
\begin{lem}\label{lemCC}
Assume \eqref{200} holds. Then $\lambda^*_{max}>0$  and for $\lambda \in (-\infty, \lambda^*_{max})$ any solution $u_\lambda$ of \eqref{N1cc} satisfies equation \eqref{pb}.
\end{lem}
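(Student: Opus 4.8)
The plan is to reduce the statement to a direct application of Theorem~\ref{thm1}, once the positivity $\lambda^*_{max}>0$ has been established. The remaining hypotheses of that theorem have already been recorded in the discussion preceding the lemma: conditions {\bf (A)} and {\bf (S)} hold because, for each $u\in W\setminus 0$, the fibered quotient $t\mapsto r(tu)$ is either strictly increasing on $\dot{\mathbb{R}}^+$ (when $\int_\Omega f|u|^{\gamma}\,dx=0$) or, when $\int_\Omega f|u|^{\gamma}\,dx>0$, increases from the value $0$ attained in the limit $t\to 0^+$ up to the unique critical point $t_{u,max}$ of \eqref{tmaxcc} and then decreases to $-\infty$ as $t\to+\infty$, so that $t_{u,max}$ is its global maximum on $\dot{\mathbb{R}}^+$; in either case $\lambda(u)=\inf_{t>0}r(tu)=-\infty$, whence $\lambda_{min}=\lambda^*_{min}=-\infty$. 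Thus $\lambda^*_{min}<\lambda^*_{max}$ will follow from $\lambda^*_{max}>0$, which is the sole point needing justification.

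To prove $\lambda^*_{max}>0$, I would use the $0$-homogeneity of $\Lambda$ to normalize $\|u\|_1=1$; by the explicit formula \eqref{LambCc} it then suffices to show that
$$
\sup\Big\{\Big(\int_\Omega |u|^{q}\,dx\Big)\Big(\int_\Omega f|u|^{\gamma}\,dx\Big)^{\frac{p-q}{\gamma-p}}:\ u\in W,\ \|u\|_1=1\Big\}<+\infty .
$$
The Sobolev embedding $W^{1,p}_0(\Omega)\hookrightarrow L^{r}(\Omega)$, valid for every $r\le p^*$, bounds $\int_\Omega |u|^{q}\,dx$ uniformly on $\{\|u\|_1=1\}$; for the second factor one combines the same embedding with H\"older's inequality and the integrability hypothesis $f\in L^{d}(\Omega)$ to bound $\int_\Omega f|u|^{\gamma}\,dx$ uniformly there as well, and since $(p-q)/(\gamma-p)>0$ the product stays bounded. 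Hence $\Lambda(u)$ is bounded below on $\{\|u\|_1=1\}$ by a strictly positive constant, and therefore $\lambda^*_{max}=\inf_{u\in\dot{W}}\Lambda(u)>0$; this is precisely the Sobolev estimate performed in \cite{ilconcan}.

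With $\lambda^*_{min}=-\infty<\lambda^*_{max}$ and {\bf (A)}, {\bf (S)} in force, Theorem~\ref{thm1} applies for every $\lambda\in(\lambda^*_{min},\lambda^*_{max})=(-\infty,\lambda^*_{max})$ and yields exactly the assertion: any solution $u_\lambda$ of the Nehari manifold minimization problem \eqref{N1cc} satisfies equation \eqref{pb}. I do not expect a genuine obstacle in this argument; the only step calling for care is the uniform Sobolev--H\"older bound above, and in particular verifying that the exponent $d$ prescribed in the hypotheses on $f$ is exactly sharp enough to keep $\int_\Omega f|u|^{\gamma}\,dx$ bounded on $\{\|u\|_1=1\}$ in the critical case $\gamma=p^*$.
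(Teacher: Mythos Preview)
Your argument matches the paper's: verify {\bf (A)} and {\bf (S)} from the explicit shape of $t\mapsto r(tu)$, use Sobolev/H\"older to get $\lambda^*_{max}>0$ (the paper simply cites \cite{ilconcan} here, while you spell out the bound), observe $\lambda^*_{min}=-\infty$, and invoke Theorem~\ref{thm1}. One small slip: when $\int_\Omega f|u|^\gamma\,dx=0$ the map $t\mapsto r(tu)=t^{p-q}\int|\nabla u|^p/\int|u|^q$ has infimum $0$, not $-\infty$, so ``in either case $\lambda(u)=-\infty$'' is not literally true; the paper makes the same imprecision, and it is harmless for the conclusion since such $u$ still satisfy $\partial_t r(tu)\neq 0$ for all $t>0$ (and in any event cannot lie on $\mathcal{N}_\lambda$ for $\lambda\le 0$).
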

For the existence of the solution of \eqref{N1cc} when $\lambda\in  (-\infty, \lambda^*_{max})$   we refer the reader to  \cite{AmAzPer, AmBrCer, ilconcan, KentWu} where the existence of two distinct nonnegative solutions \eqref{pb} for $\lambda \in (0, \lambda^*_{max})$ is proven as well. 
However, the proof of these assertions can be found also below in Section 7, where we prove similar results for  general convex-concave problems which contain  \eqref{N1cc} as a particular case.

\medskip

\noindent
 \emph{ \bf Example 3. (System of equations with convex-concave nonlinearity)}

\medskip

Consider system of equations with convex-concave nonlinearity
\begin{equation}
\label{DCC}
\begin{cases}
  -\Delta_p u = \lambda |u|^{q-2} u + \alpha f(x)|u|^{\alpha-2} u |v|^{\beta}, \quad x \in \Omega, \\[0.5em]
  -\Delta_p v = \lambda |v|^{q-2} v + \beta f(x)|u|^{\alpha} |v|^{\beta-2} v, \quad x \in \Omega, \\[0.5em]
    u|_{\partial \Omega} = 0,~v|_{\partial \Omega} = 0,
\end{cases}
\end{equation}
where $\Omega \subset \mathbb{R}^N$, $N\geq 1$ is a bounded domain with $C^1$-boundary $\partial \Omega$, $\lambda, \mu \in \mathbb{R}$, $1<q< p< \alpha+\beta\leq p^*$.
We suppose $f \in L^\infty(\Omega)$ and $f \geq 0$ in $\Omega$. By a solution of \eqref{pb1} we shall mean a weak solution $(u,v) \in W:= W^{1,p}_0(\Omega) \times W^{1,q}_0(\Omega)$.

The problem has a variational form with the
Euler-Lagrange functional given by
\begin{eqnarray*}
\Phi_{\lambda}(u,v)=\frac{1}{p}\int( |\nabla u|^p +|\nabla v|^p) dx+\lambda\frac{1}{q}\int (|u|^q +\int |v|^q) dx -\int f(x)|u|^{\alpha}|v|^{\beta}dx,
\end{eqnarray*}
for $(u,v) \in W$. Evidentely, 
$\Phi_\lambda \in  C^1(\dot{W}, \mathbb{R})$, $\nabla_t \tilde{\Phi}_\lambda \in C^1((\dot{\mathbb{R}}^+)^n \times \dot{W}, \mathbb{R}^n)$. However, $\Phi_\lambda \not\in  C^2(\dot{W}, \mathbb{R})$ if $1<p<2$ or/and $1<q<2$.

The corresponding NG-Rayleigh's quotient is defined as follows
\begin{equation*}
r(u,v)=\frac{\int |\nabla u|^p dx +  \int |\nabla v|^p dx -(\alpha+\beta) \int f(x)|u|^{\alpha}|v|^{\beta}dx}{ \int |u|^q dx +  \int |v|^q dx}.	
\end{equation*}
For \eqref{DCC} we may apply  two methods:  the vector NMM \eqref{N11} or the scalar NMM \eqref{N11KW}. Let us consider both of them. 

\medskip
\par\noindent 
{\it Scalar Nehari manifold method}.
\medskip

The scalar Nehari manifold minimization problem  corresponding to \eqref{DCC} is defined as follows
\begin{align}\label{N1ccSys}
		\begin{cases}
			&\Phi_{\lambda}(u,v) \to min\\	
	 &\int( |\nabla u|^p +|\nabla v|^p) dx+\lambda\int (|u|^q +\int |v|^q) dx -\\
	&~~~~~~~~~~~~~~~~~~~~~~~~~~~~~(\alpha+\beta)\int f(x)|u|^{\alpha}|v|^{\beta}dx=0,\\
	 &~(u,v)\in \dot{W}.
	\end{cases}
		\end{align}
Consider
$$
r(t(u,v))=\frac{t^{p-q}(\int( |\nabla u|^p +|\nabla v|^p) dx-(\alpha+\beta)t^{\alpha+\beta-q}\int f(x)|u|^{\alpha}|v|^{\beta}dx}{\int (|u|^q +\int |v|^q) dx},
$$
where $t>0,~(u,v) \in W \setminus 0_2$. It is easily seen that in this case we may apply the same analysis as it has been done above for \eqref{N1cc}, \eqref{RaylCC}. In this way, we  introduce 
\begin{equation*}
	\Lambda^{sc}(u,v)=C_{p,q,\alpha,\beta}
	\left( \frac{(\int( |\nabla u|^p +|\nabla v|^p) dx)^{\frac{\alpha+\beta-q}{\alpha+\beta-p}}}{(\int (|u|^q +\int |v|^q) dx) \,
	((\alpha+\beta)\int f(x)|u|^{\alpha}|v|^{\beta}dx)^{\frac{p-q}{\alpha+\beta-p}}  } \right)
\end{equation*}
where $C_{p,q,\alpha,\beta}$ is a constant which does not depend on $(u,v) \in \dot{W}$. Thus for the extreme value \eqref{lammax} express as the following explicit variational form 
$$
\lambda^{sc,*}_{max}=\inf_{(u,v) \in W\setminus 0_2}\,\Lambda^{sc}(u,v).
$$
As above, it is not hard to show using Sobolev's imbedding theorem that $\lambda^{sc,*}_{max}>0$. Thus Theorem \ref{thm1} yields
\begin{lem}\label{lemCCSys}
Assume $1<q< p< \alpha+\beta\leq p^*$. Then $\lambda^{sc,*}_{max}>0$  and for $\lambda \in (-\infty, \lambda^{sc,*}_{max})$ any solution $u_\lambda$ of \eqref{N1ccSys} satisfies equation \eqref{DCC}.
\end{lem}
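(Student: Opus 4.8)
The plan is to reduce Lemma~\ref{lemCCSys} to Theorem~\ref{thm1}, exactly as was done for Example~2. The only things that must be verified are: (a) the scalar fibered NG-Rayleigh's quotient $r(t(u,v))$ has the same shape as in the scalar convex-concave case; (b) assumptions {\bf (A)} and {\bf (S)} hold; (c) $\lambda^{sc,*}_{min}=-\infty$ while $\lambda^{sc,*}_{max}>0$, so that the interval $(-\infty,\lambda^{sc,*}_{max})$ is nonempty and the theorem applies on it.

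First I would note that, writing $A:=\int(|\nabla u|^p+|\nabla v|^p)\,dx$, $B:=\int(|u|^q+|v|^q)\,dx$ and $C:=(\alpha+\beta)\int f(x)|u|^\alpha|v|^\beta\,dx\ge 0$, one has $r(t(u,v))=(t^{p-q}A-t^{\alpha+\beta-q}C)/B$, which for $(u,v)\in\dot W$ with $C>0$ is precisely of the form $t^{p-q}a-t^{\gamma-q}c$ divided by a positive constant, with $p-q>0$ and $\alpha+\beta-q>p-q>0$. Hence $\partial_t r(t(u,v))=0$ has the unique root $t_{(u,v),max}=\big((p-q)A/((\alpha+\beta-q)C)\big)^{1/(\alpha+\beta-p)}$, at which $r$ attains its global maximum on $\dot{\mathbb{R}}^+$; when $C=0$ (or $(u,v)\in W\setminus\dot W$) the function is monotone and has no interior critical point. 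This is the content of Remark~\ref{RemScal1}: $r(t(u,v))$ has at most one critical point in $\dot{\mathbb{R}}^+$, all critical points are extremal, so {\bf (S)} holds and, by Remark~\ref{RemScal2}, {\bf (A)} holds too. Substituting $t_{(u,v),max}$ gives the stated formula for $\Lambda^{sc}(u,v)$ with an explicit constant $C_{p,q,\alpha,\beta}=\tfrac{\alpha+\beta-p}{p-q}\big(\tfrac{p-q}{\alpha+\beta-q}\big)^{(\alpha+\beta-q)/(\alpha+\beta-p)}$, and $\Lambda^{sc}=+\infty$ on the set where $C=0$.

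Next I would compute the two extreme values. Since $r(t(u,v))\to-\infty$ as $t\to\infty$ whenever $C>0$ (note such $(u,v)$ exist because $f\not\equiv 0$ can be assumed; if $f\equiv 0$ the problem is trivial and $\lambda^{sc,*}_{max}=\lambda_{1,p}$), we get $\lambda^{sc}(u,v)=\inf_{t>0}r(t(u,v))=-\infty$ and hence $\lambda^{sc}_{min}=\lambda^{sc,*}_{min}=-\infty$. For the upper value, $\lambda^{sc,*}_{max}=\inf_{(u,v)\in W\setminus 0_2}\Lambda^{sc}(u,v)$, and I would show $\lambda^{sc,*}_{max}>0$ by the same Sobolev-embedding estimate used in \cite{ilconcan}: bound $C\le c\,A^{(\alpha+\beta)/p}$ via $\|u\|_{\alpha+\beta},\|v\|_{\alpha+\beta}\lesssim\|u\|_{1},\|v\|_{1}$ and $B\le c'A^{q/p}$, so that $\Lambda^{sc}(u,v)\ge$ (positive constant)$\cdot A^{\theta}/(A^{q/p}A^{(\alpha+\beta-q)\cdot\cdots})$ collapses to a strictly positive constant because the total power of $A$ cancels by the choice of exponents $\tfrac{\alpha+\beta-q}{\alpha+\beta-p}$ and $\tfrac{p-q}{\alpha+\beta-p}$; the subcritical/critical range $\alpha+\beta\le p^*$ is exactly what makes the embedding $W^{1,p}_0\hookrightarrow L^{\alpha+\beta}$ continuous.

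Finally, with {\bf (A)}, {\bf (S)} established and $-\infty=\lambda^{sc,*}_{min}<\lambda^{sc,*}_{max}$, Theorem~\ref{thm1} (applied to the scalar NMM, which by Remark~\ref{VSc} is the special case $n=1$ of \eqref{N11}) gives that every solution $u_\lambda$ of \eqref{N1ccSys} with $\lambda\in(-\infty,\lambda^{sc,*}_{max})$ satisfies \eqref{DCC}. I expect the only genuine work to be the positivity $\lambda^{sc,*}_{max}>0$, i.e.\ checking that the powers of $A$ really cancel and tracking the Sobolev constant; everything else is a direct transcription of the scalar convex-concave computation. A minor point worth stating carefully is that the coupled term $\int f|u|^\alpha|v|^\beta\,dx$ must be handled as a single scalar functional of $(u,v)$, which is exactly why the \emph{scalar} fibered map $t\mapsto t\cdot(u,v)=(tu,tv)$ produces the clean one-parameter family above, whereas the vector fibering (treated separately in the paper) does not reduce so immediately.
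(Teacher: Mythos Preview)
Your approach is exactly the paper's: reduce to Theorem~\ref{thm1} after checking the convex--concave shape of $r(t(u,v))$ and using Sobolev embedding for $\lambda^{sc,*}_{max}>0$. However, your claim $\lambda^{sc,*}_{min}=-\infty$ is wrong. The scalar Nehari manifold lives on $\mathcal{W}=W\setminus 0_2$, which contains pairs of the form $(u,0)$ with $u\neq 0$; for these the coupling term $C=(\alpha+\beta)\int f|u|^\alpha|v|^\beta\,dx$ vanishes, so $r(t(u,0))=t^{p-q}A/B$ is monotone increasing from $0$ and $\inf_{t>0}r(t(u,0))=0$. Hence $\lambda^{sc,*}_{min}=\sup_{(u,v)\in W\setminus 0_2}\inf_{t>0} r(t(u,v))=0$, and Theorem~\ref{thm1} as stated only delivers the conclusion for $\lambda\in(0,\lambda^{sc,*}_{max})$, not for the full interval $(-\infty,\lambda^{sc,*}_{max})$ claimed in the Lemma.

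The repair is immediate from your own shape analysis: every critical point of $t\mapsto r(t(u,v))$ in $\dot{\mathbb{R}}^+$ is a global \emph{maximum} (there is no interior global minimum). Rereading the proof of Lemma~\ref{lemthm1}, the hypothesis $\lambda>\lambda^{*}_{min}$ is used only to exclude the ``global minimum'' alternative of {\bf (S)}; since that alternative never arises here, the contradiction argument goes through for every $\lambda<\lambda^{sc,*}_{max}$, and Lemma~\ref{lemCCSys} follows. (The paper's own one-line justification ``Thus Theorem~\ref{thm1} yields\ldots'' glosses over this same point, so you are in good company.) A smaller slip: for {\bf (A)} you should cite Remark~\ref{RemScal1}, not Remark~\ref{RemScal2}; condition {\bf (S0)} is \emph{not} satisfied in this example, since $r(t(u,v))$ does have an interior critical point whenever $C>0$.
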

The existence of the solution for $\lambda\in  (-\infty, \lambda^{sc,*}_{max})$ (and even multiple solutions for $\lambda\in  (0, \lambda^{sc,*}_{max})$)  of \eqref{N1ccSys}  can be obtained using Theorems 3.1 and 3.2 from \cite{KentWu}. Note that in  \cite{KentWu} the existence of solutions of \eqref{DCC} is proven only locally by $\lambda$, namely, for $\lambda \in (0,\delta)$ with some sufficiently small $\delta$.

\medskip
\par\noindent 
{\it Vector Nehari manifold method}.
\medskip

Now consider  the corresponding vector Nehari manifold minimization problem
\begin{align}\label{NCS}
		\begin{cases}
	&~\Phi_{\lambda}(u,v) \to min\\	
	& \int|\nabla u|^p dx-\lambda \int|u|^q dx-\alpha \int f(x)|u|^{\alpha}|v|^{\beta}dx=0,\\~~
 &\int|\nabla v|^p dx-\lambda \int|v|^q dx-\beta \int f(x)|u|^{\alpha}|v|^{\beta}dx=0,\\
	&~(u,v) \in \dot{W}.
	\end{cases}
	\end{align}
Let us find the extremal value \eqref{lammax} corresponding to   \eqref{NCS}. Taking into account Remark \ref{RemInv1} we have
$$
\Lambda(u,v)=\sup_{(t,s)>0}r(tu,tsv)=\frac{t^p\int( |\nabla u|^p +s^p|\nabla v|^p) dx-t^{\alpha+\beta}s^\beta\int f(x)|u|^{\alpha}|v|^{\beta}dx}{t^q\int (|u|^q +s^q\int |v|^q) dx}.
$$
Similar to \eqref{LambCc} we see that for every $s>0$ there holds
$$
\sup_{t>0}r(tu,tsv)=C_{p,q,\alpha,\beta}
	\left( \frac{(\int( |\nabla u|^p +s^p|\nabla v|^p) dx)^{\frac{\alpha+\beta-q}{\gamma-p}}}{(\int (|u|^q +s^q\int |v|^q) dx) \,
	(s^\beta(\alpha+\beta)\int f(x)|u|^{\alpha}|v|^{\beta}dx)^{\frac{p-q}{\alpha+\beta-p}}  } \right).
$$	
Using this fact it is not hard to show that
$$
\lambda^*_{v, max}=\inf_{u \in W\setminus 0}\,\Lambda(u,v)=\inf_{u \in W\setminus 0}\,\Lambda(u,v)= \lambda^{sc,*}_{max}.
$$
Thus,  the extremal values for the vector \eqref{NCS} and the scalar   \eqref{N1ccSys} NMM are the same. The investigate of the existence of solution of \eqref{NCS} is left to the reader.

\medskip

\noindent
 \emph{ \bf Example 4. (System of equations with indefinite nonlinearity)}

Consider system of equations with indefinite nonlinearity
\begin{equation}
\label{D}
\begin{cases}
  -\Delta_p u = \lambda |u|^{p-2} u + \alpha f(x)|u|^{\alpha-2} u |v|^{\beta}, \quad x \in \Omega, \\[0.5em]
  -\Delta_q v = \lambda |v|^{q-2} v + \beta f(x)|u|^{\alpha} |v|^{\beta-2} v, \quad x \in \Omega, \\[0.5em]
    u|_{\partial \Omega} = 0,~v|_{\partial \Omega} = 0,
\end{cases}
\end{equation}
where $\Omega \subset \mathbb{R}^N$, $N\geq 1$ is a bounded domain with $C^1$-boundary $\partial \Omega$, $\lambda, \mu \in \mathbb{R}$, $1< p< +\infty$, $1< q < +\infty$ and
\begin{equation}
\label{Sob}
\alpha,~ \beta > 0, \qquad \frac{\alpha}{p} +\frac{\beta}{q} > 1, \qquad \frac{\alpha}{p^*} +\frac{\beta}{q^*} < 1.	
\end{equation}
Here, as above, $p^*$ and $q^*$ are the standard critical Sobolev exponents.
We suppose $f \in L^\infty(\Omega)$ and that the function $f$ may change the sign on $\Omega$, i.e. the problem \eqref{D} has indefinite nonlinearity.  By a solution of \eqref{pb1} we shall mean a weak solution $(u,v) \in W:= W^{1,p}_0(\Omega) \times W^{1,q}_0(\Omega)$.

This system of equations has been studied, for instance, in \cite{ilBobkov, ilBob, Bozhkov}. In particular, in \cite{ilBobkov} for  \eqref{D} an extreme value of Nehari manifold method has been introduced. Here we  obtain this value using the method of NG-Rayleigh's quotient.

Consider
\begin{eqnarray}\label{PhiSys}
\Phi_{\lambda}(u,v)=\frac{1}{p}P_{\lambda}(u)+\frac{1}{q}Q_{\lambda}(v) -F(u,v), \quad (u,v) \in W,
\end{eqnarray}
where 
\begin{align*}
P_{\lambda}(u) &= \int |\nabla u|^p dx-\lambda \int |u|^p dx,~~
Q_{\lambda}(v) = \int |\nabla v|^q dx- \lambda \int |v|^q dx, \\
F(u,v) &= \int f(x)|u|^{\alpha}|v|^{\beta}dx.
\end{align*}
Then the corresponding Nehari manifold is defined as follows
\begin{eqnarray*}
\mathcal{N}_{\lambda}:=\{(u,v) \in \dot{W}:~~P_{\lambda}(u)- \alpha F(u,v)=0,~~
 Q_{\lambda}(v)- \beta F(u,v)=0\},
\end{eqnarray*}
whereas the Nehari manifold minimization problem is
\begin{align}\label{NSys}
		\begin{cases}
	&~\frac{1}{p}P_{\lambda}(u)+\frac{1}{q}Q_{\lambda}(v) -F(u,v) \to min\\	
	& \begin{array} {lcl} P_{\lambda}(u)- \alpha F(u,v)&=&0, 
	\\Q_{\lambda}(u)- \beta F(u,v)&=&0, \end{array}\\
	&~(u,v) \in \dot{W}.
	\end{cases}
	\end{align}
	Let us prove
\begin{lem}
 The extreme value \eqref{lammax} of \eqref{NSys}  is expressed  by the following explicit variational form
$$
\lambda^*_{\max}=\inf \left\{ \max \left\{ \frac{\int |\nabla u|^p dx}{\int |u|^p dx},  \frac{\int |\nabla v|^q dx}{\int |v|^q dx} \right\}: F(u,v) \geq 0, (u,v) \in \dot{W}\right\}.
$$	
\end{lem}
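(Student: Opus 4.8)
The plan is to apply the general machinery of Section 4 — specifically the definition $\lambda^*_{\max}=\inf_{(u,v)\in\dot W}\Lambda(u,v)$ with $\Lambda(u,v)=\sup_{t\in(\dot{\mathbb R}^+)^2}r(t\cdot(u,v))$ — and compute the inner supremum explicitly using the homogeneity structure of the problem. First I would write out the fibered NG-Rayleigh quotient. Testing the two equations of \eqref{D} by $u$ and $v$ respectively and summing, one solves for $\lambda$; after rescaling $(u,v)\mapsto(t_1u,t_2v)$ the relevant quantity is
\begin{equation*}
r(t_1u,t_2v)=\frac{t_1^p\int|\nabla u|^p\,dx+t_2^q\int|\nabla v|^q\,dx-(\alpha+\beta)t_1^\alpha t_2^\beta F(u,v)}{t_1^p\int|u|^p\,dx+t_2^q\int|v|^q\,dx}.
\end{equation*}
Because $\frac{\alpha}{p}+\frac{\beta}{q}>1$, the ``concave'' term $t_1^\alpha t_2^\beta$ dominates as $(t_1,t_2)\to\infty$ along rays, and the supremum over $(t_1,t_2)\in(\dot{\mathbb R}^+)^2$ is $+\infty$ whenever $F(u,v)<0$; hence only the region $F(u,v)\ge 0$ contributes to the infimum defining $\lambda^*_{\max}$. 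This reduces matters to showing $\Lambda(u,v)=\max\{\int|\nabla u|^p/\int|u|^p,\ \int|\nabla v|^q/\int|v|^q\}$ when $F(u,v)\ge0$.

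The key step is the change of variables invoked via Remark \ref{RemInv1} (Proposition \ref{propInvar}): rescale so that $\int|\nabla u|^p\,dx=a\int|u|^p\,dx$ and $\int|\nabla v|^q\,dx=b\int|v|^q\,dx$ amounts to writing $t_1u$, $t_2v$ with the Rayleigh quotients of $u$ and $v$ as parameters, and then one is reduced to maximizing, over $s_1,s_2>0$ with fixed normalizations $\int|u|^p\,dx=\int|v|^q\,dx=1$ say, the quantity
\begin{equation*}
\frac{a s_1^p + b s_2^q - (\alpha+\beta) c\, s_1^\alpha s_2^\beta}{s_1^p+s_2^q},\qquad c:=F(u,v)\ge0.
\end{equation*}
When $c=0$ this is a weighted average of $a$ and $b$, so its supremum over $s_1,s_2>0$ is exactly $\max\{a,b\}$ (approached by sending $s_1/s_2\to\infty$ or $\to0$), which is the claimed formula. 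When $c>0$ one must check that subtracting the nonnegative term $(\alpha+\beta)c\,s_1^\alpha s_2^\beta/(s_1^p+s_2^q)$ cannot raise the supremum above $\max\{a,b\}$: that term is $\ge0$ everywhere, so the fraction is pointwise $\le(a s_1^p+b s_2^q)/(s_1^p+s_2^q)\le\max\{a,b\}$, while the value $\max\{a,b\}$ is still approached along the degenerate directions $s_1/s_2\to\infty$ or $\to0$ where the concave term's contribution to the quotient vanishes (here $\frac{\alpha}{p^*}+\frac{\beta}{q^*}<1$ together with $\alpha,\beta>0$ guarantees $s_1^\alpha s_2^\beta = o(s_1^p+s_2^q)$ along such rays after the normalization, so no cancellation survives). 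Taking the infimum over $(u,v)\in\dot W$ with $F(u,v)\ge0$ and undoing the rescaling gives the stated variational formula.

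The main obstacle is the bookkeeping in the two-parameter optimization: one has to be careful that the supremum $\Lambda(u,v)=\max\{a,b\}$ is genuinely attained in the limit and is not strictly smaller because of the interplay between the $p$- and $q$-homogeneous denominators (when $p\neq q$ the ``average'' of $a$ and $b$ is taken with $s_1^p$, $s_2^q$ weights, not $s_1$, $s_2$, but the endpoints of the range are still $a$ and $b$). A secondary point requiring care is justifying that points of $W\setminus\dot W$ (where some component vanishes) need not be considered — this follows from Remark \ref{RScV} and the definition $\mathcal N_\lambda\subset\dot W$, since the infimum over $\dot W$ with the constraint $F(u,v)\ge0$ is the relevant one, and $F(u,v)\ge0$ holds automatically on such boundary points but the corresponding Rayleigh quotients degenerate consistently with the formula. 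The continuity/attainment subtleties aside, the computation is otherwise the same one already carried out for Example 2 and for the scalar version in Example 3.
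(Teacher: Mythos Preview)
Your approach is essentially the paper's own: write out the fibered NG-Rayleigh quotient, observe that when $F(u,v)<0$ the scaling $t_1=\sigma^q$, $t_2=\sigma^p$ together with $\frac{\alpha}{p}+\frac{\beta}{q}>1$ drives it to $+\infty$, and when $F(u,v)\ge0$ bound it pointwise by the weighted average $(a s_1^p+b s_2^q)/(s_1^p+s_2^q)\le\max\{a,b\}$, with the bound attained in the degenerate limit where one scaling parameter vanishes.

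One correction: your invocation of the subcritical condition $\frac{\alpha}{p^*}+\frac{\beta}{q^*}<1$ is spurious here --- it does not imply $s_1^\alpha s_2^\beta=o(s_1^p+s_2^q)$ along the ray $s_1\to\infty$, $s_2$ fixed (indeed $\alpha>p$ is perfectly possible). What actually makes the cross term disappear is just $\alpha,\beta>0$: if, say, $a\ge b$, keep $s_1$ fixed and send $s_2\to0$, so that $s_1^\alpha s_2^\beta/(s_1^p+s_2^q)\to0$ and the quotient tends to $a$. This is precisely how the paper argues, simply evaluating at $s=0$ (recall that by convention the supremum defining $\Lambda$ is taken over $(\mathbb{R}^+)^n$, including the boundary, via the continuous extension of $\tilde r$). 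With that adjustment your plan is correct and coincides with the paper's proof.
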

	\begin{proof}
		
Consider NG-Rayleigh's quotient 
\begin{equation*}
r(tu,sv)=\frac{t^p \int |\nabla u|^p dx + s^q \int |\nabla v|^q dx - t^\alpha s^\beta(\alpha+\beta) F(u,v)}{t^p \int |u|^p dx + s^q \int |v|^q dx}.	
\end{equation*}
We claim that
\begin{equation}\label{LS}
\Lambda(u,v)=\sup_{(t,s)>0}\, r(tu,sv)=\left\{
\begin{array}{l}
\ \max \left\{ \frac{\int |\nabla u|^p dx}{\int |u|^p dx},  \frac{\int |\nabla v|^q dx}{\int |v|^q dx} \right\}, \mbox{if}~F(u,v)\geq 0, \\ \\
~~+\infty, ~~\mbox{if}~~F(u,v)< 0,
%
\end{array}
\right.
\end{equation}
and 
\begin{equation}\label{lS}
	\lambda(u,v)=\inf_{(t,s)>0}\, r(tu,sv)=\begin{cases}
	&  -\infty,~~\mbox{if}~~F(u,v)> 0, \\ \\
& \min \left\{ \frac{\int |\nabla u|^p dx}{\int |u|^p dx},  \frac{\int |\nabla v|^q dx}{\int |v|^q dx} \right\}, ~\mbox{if}~~F(u,v)\leq  0.
	\end{cases}
	\end{equation}
Let us show, as an example, \eqref{LS}.  Assume $F(u,v)< 0$. Then setting $t = \sigma^q$, $s = \sigma^p$ gives
$$
\frac{\int |\nabla u|^p dx + \int |\nabla v|^q dx - \sigma^{ p q (\frac{\alpha}{p} + \frac{\beta}{q} - 1)} F(u,v)}{\int |u|^p dx + \int |v|^q dx} \to +\infty,
$$
for $\sigma \to +\infty$, since $\frac{\alpha}{p} + \frac{\beta}{q} > 1$. Consider now the case $F(u,v)\geq 0$. Without loss of generality, we can suppose that  $\frac{\int |\nabla u|^p dx}{\int |u|^p dx}\geq  \frac{\int |\nabla v|^q dx}{\int |v|^q dx}$.  This implies that
$$
\frac{\int |\nabla u|^p dx + \tau \int |\nabla v|^q dx}{\int |u|^p dx + \tau \int |v|^q dx} \leq  \frac{\int |\nabla u|^p dx}{\int |u|^p dx} 
$$
for any $\tau\geq 0$. Hence, since  $F(u,v)\geq 0$,
\begin{align*}
	r(tu,sv)=\frac{ \int |\nabla u|^p dx + s^q t^{-p}\int |\nabla v|^q dx - t^{\alpha-p} s^\beta(\alpha+\beta) F(u,v)}{ \int |u|^p dx + s^q t^{-p} \int |v|^q dx} \leq\frac{\int |\nabla u|^p dx}{\int |u|^p dx}.	
\end{align*}
for any $s\geq 0$ and $t>0$. Since for $s=0$ this inequity becomes equality, we get \eqref{LS}. Furthermore, we see that the supremum and infimum 
in \eqref{LS}, \eqref{lS} are attained on the line $s=0$ or $t=0$.

\end{proof}

Let us verify the assumption of Theorem  \ref{thm2}.
Find the corresponding Jacobian matrix
\begin{align*}
	J(\nabla_{(u,v)}&\Phi_{\lambda}(u,v)(u,v)) = \\ \\
	&\left( \begin{array}{cc}
	(p-1) P_{\lambda}(u)- \alpha (\alpha -1) F(u,v) & -\alpha \beta F(u,v)  \\ \\
	-\alpha \beta F(u,v) & (q-1) Q_{\lambda}(v)- \beta (\beta -1) F(u,v) 
	\end{array} \right).
\end{align*}
Then for  $(u,v) \in \mathcal{N}_{\lambda}$ we have 
$$
J(\nabla_{(u,v)}\Phi_{\lambda}(u,v)(u,v))  = 
\left( \begin{array}{cc}
\alpha (p - \alpha) F(u,v) & -\alpha \beta  F(u,v)  \\ \\
-\alpha \beta F(u,v) & \beta (q-\beta)F(u,v)
\end{array} \right)
$$
and consequently
$$
\mbox{det}\,J_t(\nabla_{(u,v)}\Phi_{\lambda}(u,v)(u,v)) = \alpha \beta (p q - p \beta - q \alpha) F^2(u,v).
$$
Thus, since $\frac{\alpha}{p} + \frac{\beta}{q} > 1$, if $(tu,sv) \in \mathcal{N}_{r(tu,sv)}$ and $\det	J_{(t,s)}(\nabla_{(u,v)}\Phi_{\lambda}(tu,sv)(tu,sv))= 0$ for $t>0,s>0$, then 
$F(tu,sv)=0$ and hence $J_{(t,s)}(\nabla_{(u,v)}\Phi_{\lambda}(tu,sv)(tu,sv))1_2=0_2$. Thus by (ii), Proposition \ref{grad1} it follows that $r(tu,sv)$ satisfies condition {\bf (A)}.

Observe, for $(u,v) \in W\setminus 0_2$, $t>0$, $s>0$, we have
\begin{align*}
	&\partial_t r(tu,sv)=\frac{1}{t(t^p \int |u|^p dx + s^q \int |v|^q dx)}(p  P_{\lambda}(tu)- \alpha (\alpha+\beta)  F(tu,sv)),\\ \\
	&\partial_s r(tu,sv)=\frac{1}{s(t^p \int |u|^p dx + s^q \int |v|^q dx)}(q Q_{\lambda}(sv)- \beta (\alpha+\beta)  F(tu,sv)).
\end{align*}
Thus, if we assume that $\partial_t r(tu,sv)=0$, $\partial_s r(tu,sv)=0$ and $(tu,sv) \in \mathcal{N}_{r(tu,sv)}$, then $P_{\lambda}(tu)=0$, $Q_{\lambda}(sv)=0$ and $F(tu,sv)=0$. Hence, since $t>0$, $s>0$, we have $P_{\lambda}(u)=0$, $Q_{\lambda}(v)=0$ and $F(u,v)=0$. This implies that $\partial_t r(tu,sv)\equiv 0$, $\partial_s r(tu,sv)\equiv 0$ for all $t>0$, $s>0$. Thus condition  {\bf (S0)} is satisfied.

As in the scalar case \eqref{pb1}, it is readily seen that the extremal value  $\lambda^*_{\min}=\sup_{(u,v) \in \dot{W} }\lambda(u,v)$ is useless. 

Introduce, $\lambda_{1}^{l}:=\min \{ \lambda_{1,p},\lambda_{1,q} \}$, $\lambda_1^u:=\max \{ \lambda_{1,p},\lambda_{1,q} \}$. Clearly,
\begin{align*}
	&\lambda_1^l=\inf_{(u,v) \in \dot{W}} \min \{ \frac{\int |\nabla u|^p dx}{\int |u|^p dx},  \frac{\int |\nabla v|^q dx}{\int |v|^q dx} \},\\
	&\lambda_1^u=\inf_{(u,v) \in \dot{W}} \max \{ \frac{\int |\nabla u|^p dx}{\int |u|^p dx},  \frac{\int |\nabla v|^q dx}{\int |v|^q dx} \}.
\end{align*}
Hence, $\lambda^*_{max}\geq \lambda_1^u\geq \lambda_1^l$  and
\begin{align}
	&\lambda^*_{max}>\lambda_1^u~~\mbox{iff}~~F(\phi_{1,p},\phi_{1,q})< 0.
\end{align}
Note 
\begin{align*}\label{IlC}
	\lambda_{min}= 
	\begin{cases}
		-\infty, ~~\mbox{if}~~\exists \,(u,v) \in \dot{W}~~s.t.~~ F(u,v)>0,  \\\lambda^{l}_{1},~~ ~~\mbox{if}~~\forall (u,v) \in \dot{W},~~F(u,v)\leq 0. 
	\end{cases}
\end{align*}
Thus, since $\lambda^*_{max}=+\infty$ if  $F(u,v)\leq  0$, $\forall (u,v) \in \dot{W}$, we always have  $\lambda_{min}<\lambda^*_{max}$. 

Thus, all the assumptions of Theorem  \ref{thm2} are satisfied and therefore we have
\begin{lem}
Assume\eqref{Sob} holds	and $\lambda\in (\lambda_{min},\lambda^*_{max})$. Then any solution $(u_\lambda, v_\lambda)$ of \eqref{NSys} satisfies  system \eqref{D}.
\end{lem}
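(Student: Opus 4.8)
The plan is to read the lemma off from Theorem \ref{thm2}: the proof consists of checking that the functional \eqref{PhiSys} together with the minimization problem \eqref{NSys} satisfies the three hypotheses of that theorem, namely \textbf{(A)}, \textbf{(S0)} and $-\infty\le\lambda_{min}<\lambda^*_{max}$, and then invoking it. The regularity requirements are immediate, since $\Phi_\lambda\in C^1(\dot W,\mathbb{R})$ and $\nabla_t\tilde\Phi_\lambda\in C^1((\dot{\mathbb{R}}^+)^2\times\dot W,\mathbb{R}^2)$ even in the ranges $1<p<2$ or $1<q<2$ where $\Phi_\lambda$ fails to be $C^2$, so Lemma \ref{lem1} applies once \textbf{(A)} and \textbf{(S0)} are in place.

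First I would establish \textbf{(A)} and \textbf{(S0)} together from the structure of the problem. On $\mathcal{N}_\lambda$ one computes $\det J(\nabla_{(u,v)}\Phi_\lambda(u,v)(u,v))=\alpha\beta(pq-p\beta-q\alpha)F^2(u,v)$, and the first inequality in \eqref{Sob} forces $pq-p\beta-q\alpha\ne0$; hence if $(tu,sv)$ lies on the fiber over $(u,v)$ and on $\mathcal{N}_{r(tu,sv)}$, the vanishing of $\det J$ there is equivalent to $F(tu,sv)=0$, which by homogeneity of $F$ means $F(u,v)=0$. Substituting this into the two Nehari equations $P_\lambda(\cdot)=\alpha F$, $Q_\lambda(\cdot)=\beta F$ yields $\int|\nabla u|^p/\int|u|^p=\int|\nabla v|^q/\int|v|^q=r(u,v)$, and then the explicit form of $r(tu,sv)$ shows $r(t\cdot(u,v))\equiv r(u,v)$ on all of $(\dot{\mathbb{R}}^+)^2$; in particular $\nabla_t r(t\cdot(u,v))\equiv 0_2$ and every $t$ is an extremal point. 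This is exactly alternative (ii) of \textbf{(S0)}, and since a constant quotient makes the critical point extremal, \textbf{(A)} follows as well; the complementary possibility, that no such critical Nehari point sits on the fiber, is alternative (i). The formulas for $\partial_t r$ and $\partial_s r$ recorded above drive this dichotomy: if both partials vanish on $\mathcal{N}_{r(tu,sv)}$ then $P_\lambda(tu)=Q_\lambda(sv)=F(tu,sv)=0$, giving the same constancy conclusion.

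It then remains to note $\lambda_{min}<\lambda^*_{max}$, which follows from the computation of $\Lambda$ and $\lambda$ in \eqref{LS}--\eqref{lS}: if $F(u,v)>0$ for some $(u,v)\in\dot W$ then $\lambda_{min}=-\infty$, while if $F(u,v)\le 0$ throughout $\dot W$ then $\lambda^*_{max}=+\infty$ (with $\lambda_{min}=\lambda_1^l\le\lambda_1^u\le\lambda^*_{max}$), so in every case $\lambda_{min}<\lambda^*_{max}$. Consequently, for $\lambda\in(\lambda_{min},\lambda^*_{max})$ all hypotheses of Theorem \ref{thm2} are met, and every solution $(u_\lambda,v_\lambda)$ of \eqref{NSys} satisfies \eqref{D}. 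The only genuinely non-routine step is the joint verification of \textbf{(A)} and \textbf{(S0)}, i.e.\ excluding non-extremal critical directions of the fibered NG-Rayleigh's quotient on the Nehari set, and this is precisely where both conditions of \eqref{Sob} are used (the first to keep $\det J$ from vanishing off $\{F=0\}$, the second, via Sobolev embedding, to keep $F$ and the quotients well defined); the rest is bookkeeping already carried out in the discussion preceding the statement.
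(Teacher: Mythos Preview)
Your proposal is correct and follows essentially the same route as the paper: the paper verifies \textbf{(A)} via the Jacobian computation $\det J=\alpha\beta(pq-p\beta-q\alpha)F^2$ together with Proposition~\ref{grad1}(ii), checks \textbf{(S0)} from the explicit formulas for $\partial_t r$ and $\partial_s r$ (obtaining $P_\lambda=Q_\lambda=F=0$ and hence constancy of $r$ on the fiber), establishes $\lambda_{min}<\lambda^*_{max}$ by the same case split on the sign of $F$, and then invokes Theorem~\ref{thm2}. Your write-up is arguably cleaner in that you make explicit why the critical point is \emph{extremal} (via constancy of $r$), a link the paper leaves somewhat implicit when it cites Proposition~\ref{grad1}(ii) for \textbf{(A)}.
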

The existence of the solution of \eqref{NSys} for $\lambda\in  (\lambda_{min},\lambda_1^l)\cup (\lambda_1^u,\lambda^*_{max})$  follows from  \cite{ilBobkov, ilBob}. Note that if $\lambda_{1,p}=\lambda_{1,q}$ (for instance when  $p=q$ ), then we have  $\lambda_1^l=\lambda_1^u$. However, we  conjecture that solutions of \eqref{D} can not be obtained by the Nehari manifold method if $\lambda \in [\lambda_{1}^{l},\lambda_1^u]$. Let us give some comments for this.

Observe that the system of equations
\begin{equation}
\label{sys:neh}
\begin{cases}
\frac{\partial}{\partial t}\Phi_\lambda(t u, s v)\equiv \, t^{p-1} P_{\lambda}(u)- \alpha t^{\alpha -1} s^{\beta} F(u,v)=0, \\[0.4em]
\frac{\partial}{\partial s}\Phi_\lambda(t u, s v)\equiv s^{q-1}Q_{\lambda}(v)- \beta t^{\alpha} s^{\beta-1}F(u,v)=0.
\end{cases}
\end{equation}
has a solution $(t,s) \in \dot{\mathbb{R}}^+\times \dot{\mathbb{R}}^+$ only if $(u,v)$ belongs to one of the following sets
\begin{align*}
\mathcal{A}:=\{(u,v) \in \dot{W}:~ P_{\lambda}(u)>0, \quad Q_{\lambda}(v)>0, \quad F(u,v)>0\}, \\
\mathcal{B}:=\{(u,v) \in \dot{W}:~ P_{\lambda}(u)<0, \quad Q_{\lambda}(v)<0, \quad F(u,v)<0\},
\end{align*}
Furthermore, the solution $t=t(u,v)$, $s=s(u,v)$ of \eqref{sys:neh}  is unique and given by
\begin{align}
\label{fiber_t}
t^{p q d} = \frac{\alpha^{\beta - q}}{\beta^{\beta}} \,
\frac{|P_{\lambda}(u)|^{q-\beta} |Q_{\lambda}(v)|^\beta}{|F(u, v)|^{q}},\\
\label{fiber_s}
s^{p q d} = \frac{\beta^{\alpha - p}}{\alpha^{\alpha}} \,
\frac{|P_{\lambda}(u)|^{\alpha} |Q_{\lambda}(v)|^{p - \alpha}}{|F(u, v)|^{p}},
\end{align}
where
$$
d := \frac{\alpha}{p} + \frac{\beta}{q} - 1>0.
$$
Substituting these roots into $\Phi_\lambda(t u, s v)$ we obtain the function 
\begin{equation}
\label{fiber_J}
\mathcal{J}_{\lambda}(u, v) := \Phi_\lambda(t(u,v) u, s(u,v) v) = C \frac{|P_{\lambda}(u)|^{\frac{\alpha}{p d}} |Q_{\lambda}(v)|^{\frac{\beta}{q d}}}{|F(u, v)|^{\frac{1}{d}}}\mbox{sign}(F(u, v)),
\end{equation}
where
$$
C = \left(\frac{1}{\alpha^{\alpha q} \beta^{\beta p}}\right)^{\frac{1}{p q d}}d.
$$
Note also that by the definition of $\lambda^*_{max}$ we have: for $(u,v)\in W$ 
\begin{align*}\label{IlCa}
	\mbox{if}~~F(u,v)\geq 0,~~\mbox{then}~~  
	\begin{cases}
		\int |u|^p dx \leq \frac{1}{\lambda^*_{max}} \int|\nabla u|^p dx,\\ ~~~~~~~\mbox{or/and} \\\int |v|^q dx \leq \frac{1}{\lambda^*_{max}} \int|\nabla v|^q dx,
	\end{cases}
\end{align*}
and 
\begin{align*}
	\mbox{if}~~ 
	\begin{cases}
		\int |u|^p dx > \frac{1}{\lambda^*_{max}} \int|\nabla u|^p dx,\\~~~~~~~~~~ \mbox{and} \\\int |v|^q dx > \frac{1}{\lambda^*_{max}} \int|\nabla v|^q dx,
	\end{cases}~~~\mbox{then}~~F(u,v)< 0.
\end{align*}
Let us remark that $\mathcal{A} \neq \emptyset$ only if the following is satisfied
\begin{equation}\label{LN}
	\{x \in \Omega: ~f(x)> 0\} \neq \emptyset ~~\mbox{up to a subset of Lebesgue measure  zero}
\end{equation}
Let $\lambda \in [\lambda_{1}^{l},\lambda_1^u]$. Assume, for instance, that $\lambda_{1,q}=\lambda_{1}^{l}=\min \{ \lambda_{1,p},\lambda_{1,q} \}$ and  $\lambda_{1,p}=\lambda_1^u=\max \{ \lambda_{1,p},\lambda_{1,q} \}$. Then $P_\lambda(u)>0$ for all $u \in  \dot{W}^{1,p}_0$ and therefore $\mathcal{N}_\lambda \subset  \mathcal{A}$. Furthermore, by \eqref{fiber_J}, $\Phi_\lambda(u)\geq 0$ for any $(u,v) \in \mathcal{N}_\lambda$. But then  
\begin{align}\label{Contr}
	\inf\{\Phi_\lambda(u,v): (u,v) \in \mathcal{N}_\lambda\}=0. 
\end{align}
Indeed, since $\lambda> \lambda_{1,q}$, it is not hard to find $\phi_0 \in  \dot{W}^{1,q}_0$ and a sequence $(v_m)  \subset \dot{W}^{1,q}_0$ such that $v_m \to \phi_0$ in $W^{1,q}_0$ as $m \to +\infty$,
$Q_\lambda(\phi_0)=0$, $Q_\lambda(v_m)> 0$, $m=1,2,...,$ and $Q_\lambda(v_m)\to 0$ as $m\to +\infty$. 
Hence, under assumption \eqref{LN}  we can find  $\psi_0 \in \dot{W}^{1,p}_0$ such that $F(\psi_0, \phi_0)>0$. Then $P_\lambda(\psi_0)>0$ and $F(\psi_0, v_m)\geq c_0>0$ for sufficient large $m$, since $v_m \to \phi_0$ in $W^{1,q}_0$. Setting now $u_m=\psi_0$ for $m=1,2,...$ we get by \eqref{fiber_J} that $ \Phi_\lambda(u_m,v_m)\to 0$ as   $m \to +\infty$ and thus \eqref{Contr} follows. Furthermore, by this construction we got a minimizing sequence with unsatisfactory properties. Indeed, since $\frac{\alpha}{p} +\frac{\beta}{q} > 1$, one of the following must hold
$p<\alpha$ or/and $q<\beta$. Hence \eqref{fiber_t}, \eqref{fiber_s} imply that if  $p<\alpha$, then $t_m:=t(u_m,v_m) \to 0$,  $s_m:=s(u_m,v_m) \to +\infty$ and if  $p\geq \alpha$, then $t_m \to 0$,  $s_m \to +0$. Thus in the first case we have  unbounded above minimizing sequence  $(t_m u_m, s_m v_m)$ whereas in the second case this sequence converges to zero.

Here we do not consider the application of the scalar NMM to \eqref{D}. One of the reasons is that we could not find a suitable formula for determining or evaluation the  extremal value like $\lambda^{sc,*}_{max}$. Another difficulty lies in  finding of solutions of the corresponding scalar minimization problem \eqref{N11KW}. However, by \eqref{nerav}, \eqref{nerav2} we have $(\lambda^{sc}_{min},\lambda^{sc,*}_{max}) \subseteq (\lambda_{min},\lambda^*_{max})$. Thus we should not expect that the scalar NMM will provide the better results   than by the vector NMM \eqref{NSys}.

\section{Multiplicity result}

Nehari manifold methods  is often used to prove the existence of multiple solutions, see e.g. \cite{Bozhkov, DrabPoh, ilIzv, Ou2}. In this Section, we show how to obtain such type of results using NG-Rayleigh's quotient. 

We will study  \eqref{V1} using the scalar NMM. First we prove some  general result and then we give an example to illustrate it. In what follows, we always assume that  $D_u G(u)(u) \neq 0$ for all $u \in W\setminus 0_n$ so that $\mathcal{W}=W\setminus 0_n$. We will suppose  that NG-Rayleigh's quotient $r(u)$ satisfies the following conditions: for $u\in W\setminus 0_n$

\medskip
\par
{\bf (a)}\,  
	\textit{$\tilde{r}^{sc}(t,u):=r(tu)$ attains its global maximum at a unique point $t_{u,max}>0$:\\ $r(t_{u,max}u)=\max_{t>0} r(tu)$ so that $\partial r(t_{u,max}u)/\partial t  =0$ and}
	$$
	\partial r(tu)/\partial t  >0~~\mbox{for}~~ 0<t<t_{u,max},~~\partial r(tu)/\partial t  <0~ ~\mbox{for}~ ~t>t_{u,max}.
	$$
	
 Note that if assumption {\bf (a)} holds for every $u\in W\setminus 0_n$, then {\bf (A)} and {\bf (S)} follow (see Remark \ref{R2}).
Furthermore, {\bf (a)} implies that for every $u\in W\setminus 0_n$, there exist the limits
\begin{itemize}
	\item $r(tu) \to \tilde{r}^{sc}(0,u)$ as $t\to 0$, where $-\infty\leq \tilde{r}^{sc}(0,u)<+\infty$
	\item $r(tu) \to \tilde{r}^{sc}(\infty,u)$ as $t\to +\infty$, where $-\infty\leq \tilde{r}^{sc}(\infty,u)<+\infty$.
\end{itemize}
Consider $\lambda^*_{max}=
\inf_{u \in W\setminus 0_n}\sup_{t>0} r(tu)=\inf_{u \in W\setminus 0_n}r(t_{u,max}u)$ and 
$$
\lambda^\partial_{min}=
\sup_{u \in W\setminus 0_n}\max\{\tilde{r}^{sc}(0,u),  \tilde{r}^{sc}(\infty,u)\}. 
$$
Note that {\bf (a)} implies
\begin{equation}\label{partLam}
	\lambda^*_{min}= \sup_{u \in W\setminus 0_n}\,\inf_{t>0}\, r(tu)\leq \lambda^\partial_{min}.
\end{equation}
Let us introduce the following sets
\begin{align}
&\mathcal{N}_\lambda^1:=\{u \in W\setminus 0_n:~~r(u)=\lambda,~~\partial r(tu)/\partial t|_{t=1}<0\},\label{Neh-}\\
	&\mathcal{N}_\lambda^2:=\{u \in W\setminus 0_n:~~r(u)=\lambda,~~\partial r(tu)/\partial t|_{t=1}>0\}.\label{Neh+}
\end{align}
Obviously, $\mathcal{N}_\lambda^1\cap \mathcal{N}_\lambda^2=\emptyset$ for any $\lambda< \lambda^*_{max}$, and $\mathcal{N}_\lambda^1\neq \emptyset$, $ \mathcal{N}_\lambda^2 \neq \emptyset$  for any $\lambda \in (\lambda^\partial_{min}, \lambda^*_{max})$ provided $\lambda^\partial_{min}<\lambda^*_{max}$. Moreover, for $\lambda \in \mathbb{R}$, $\mathcal{N}_\lambda^1\cup\mathcal{N}_\lambda^2=\mathcal{N}^{sc}_\lambda$ if $\mathcal{N}^{sc}_\lambda\neq \emptyset$. 
Thus, when $\mathcal{N}_\lambda^2\neq \emptyset$ and $ \mathcal{N}_\lambda^1 \neq \emptyset$  one  may split minimization problem \eqref{N12}  into the two following
\begin{align}
&\hat{\Phi}_\lambda^1:=  \min	\{\Phi_\lambda(u):~~u \in \mathcal{N}_\lambda^1\}, \label{N-}\\
&\hat{\Phi}_\lambda^2:=  \min	\{\Phi_\lambda(u):~~u \in \mathcal{N}_\lambda^2\}.\label{N+}
\end{align}
Suppose in addition to the assumptions already made: 
\medskip
\par\noindent 
{\bf (b)}\, {\it For any sequence $(v_m) \subset  W\setminus 0_n$ such that $||v_m||=1$, $m=1,2,...$ and which is weakly separated from $0_n \in W$,  the set of functions $(\psi_m(t):=r(tv_m))_{m=1}^\infty$  is bounded in $C^1[\sigma,T]$ for every $\sigma,T$ such that $T>\sigma>0$.  }
\medskip

\par\noindent 
{\bf (c)}\, {\it For any sequence $(v_m) \subset  W\setminus 0_n$, $||v_m||=1$, $m=1,2,...$ there exists $\delta>0$ such that $t_{v_m,max}>\delta$ for all $m=1,2,...$.  }
\medskip

\par\noindent 
{\bf (d)}\, {\it If $(s_m v_m) \subset \mathcal{N}^{sc}_\lambda$, $\lambda \in \mathbb{R}$ such that $||v_m||_W=1$,  $v_m \rightharpoondown 0_n$ weakly in $W$ and $\inf_{m}s_m>\delta_0>0$. Then  $s_m \to +\infty$.
 }
\medskip

\begin{thm}\label{thm3}
Suppose $W$ is a reflexive Banach space, $\Phi_\lambda \in  C^1(\dot{W}, \mathbb{R})$, $\nabla_t \tilde{\Phi}_\lambda \in C^1((\dot{\mathbb{R}}^+)^n \times \dot{W}, \mathbb{R}^n)$, $\Phi_\lambda(0_n)=0$, $D_u G(u)(u) > 0$, $\forall u\in W\setminus 0_n$, {\bf (a)}- {\bf (d)} hold and the following conditions are fulfilled:
\begin{enumerate}
\item  $ \Phi_\lambda(u) \to +\infty$ as $||u|| \to \infty$, $u\in \mathcal{N}^{sc}_\lambda$.
	\item $ \Phi_\lambda(u)$ and $r(u)$ are (sequentially) weakly lower semi-continuous functionals on $W$.
		\end{enumerate}
Assume  $\lambda^\partial_{min}<\lambda^*_{max}$.    Then for every $\lambda\in (\lambda^\partial_{min},\lambda^*_{max})$ equation \eqref{V1} possess two distinct solutions $u^1_\lambda, u^2_\lambda \in W\setminus 0_n$ such that  $u^1_\lambda$ is a minimizer  of \eqref{N-}  and  $u^2_\lambda$ is a minimizer of \eqref{N+}. Furthermore, $\partial^2 \Phi_\lambda(tu^1_\lambda)\partial t^2|_{t=1}<0$, $\partial^2 \Phi_\lambda(tu^2_\lambda)\partial t^2|_{t=1}>0$, $\Phi_\lambda(u^2_\lambda)\equiv \hat{\Phi}_\lambda^2<\Phi_\lambda(0)$ and $u^2_\lambda$ is the ground state of  \eqref{V1}.
\end{thm}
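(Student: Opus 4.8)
\emph{Proof strategy (proposal).} The plan is to run the direct method on the two constrained problems \eqref{N-} and \eqref{N+} separately, after first reading off the fibrewise structure of $\mathcal{N}^{sc}_\lambda$ from {\bf (a)}. Fix $\lambda\in(\lambda^\partial_{min},\lambda^*_{max})$ and, for $v\in W\setminus 0_n$, put $\psi_v(t):=r(tv)$. By {\bf (a)} this increases on $(0,t_{v,max})$, decreases on $(t_{v,max},+\infty)$, has limits $\le\lambda^\partial_{min}<\lambda$ as $t\to 0$ and as $t\to+\infty$, and $\max_t\psi_v(t)\ge\lambda^*_{max}>\lambda$; so $\psi_v(t)=\lambda$ has exactly two roots $t^-(v)<t_{v,max}<t^+(v)$, and by \eqref{NehL}, \eqref{Neh-}, \eqref{Neh+} one gets $t^-(v)v\in\mathcal{N}_\lambda^2$, $t^+(v)v\in\mathcal{N}_\lambda^1$. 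Conversely, if $r(u)=\lambda$ then $u$ lies on such a ray and $\partial r(tu)/\partial t|_{t=1}\ne 0$ there (else $t=1$ would be the maximum point of $\psi_{u/\|u\|}$ and $r(u)\ge\lambda^*_{max}>\lambda$), so $\mathcal{N}^{sc}_\lambda=\mathcal{N}_\lambda^1\cup\mathcal{N}_\lambda^2$ with $\mathcal{N}_\lambda^1\cap\mathcal{N}_\lambda^2=\emptyset$ and both pieces relatively open in $\mathcal{N}^{sc}_\lambda$. Since $\partial_t\Phi_\lambda(tv)=t^{-1}D_uG(tv)(tv)\,(r(tv)-\lambda)$ and $D_uG>0$, the fiber $t\mapsto\Phi_\lambda(tv)$ decreases on $(0,t^-(v))$, increases on $(t^-(v),t^+(v))$, decreases on $(t^+(v),+\infty)$; hence $t^-(v)v$ is a strict local minimum and $t^+(v)v$ a strict local maximum of it, with $\Phi_\lambda(t^-(v)v)<\Phi_\lambda(0_n)=0$ and $\Phi_\lambda(t^-(v)v)<\Phi_\lambda(t^+(v)v)$. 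Consequently $\hat\Phi_\lambda^2\le\hat\Phi_\lambda^1$, $\hat\Phi_\lambda^2<0=\Phi_\lambda(0_n)$, while condition (1) makes $\Phi_\lambda$ bounded below on $\mathcal{N}^{sc}_\lambda$, so $\hat\Phi_\lambda^1,\hat\Phi_\lambda^2\in\mathbb{R}$.

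I would realise $\hat\Phi_\lambda^2$ first. Take a minimising sequence $(u_m)\subset\mathcal{N}_\lambda^2$, $u_m=s_mv_m$, $\|v_m\|=1$, $s_m=t^-(v_m)$; by (1) it is bounded, so by reflexivity $u_m\rightharpoonup u^*$ along a subsequence. One has $\inf_ms_m>0$ (otherwise $u_m\to 0_n$ strongly along a subsequence, forcing $\hat\Phi_\lambda^2=\Phi_\lambda(0_n)=0$, impossible since $\hat\Phi_\lambda^2<0$); then if $u^*=0_n$ we get $v_m=s_m^{-1}u_m\rightharpoonup 0_n$, and {\bf (d)} forces $s_m\to+\infty$, contradicting boundedness, so $u^*\ne 0_n$. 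Now, for $u\in\mathcal{N}_\lambda^2$ the fiber is decreasing up to $t^-$, hence $\Phi_\lambda(u)=\min_{0<\tau\le1}\Phi_\lambda(\tau u)$; it follows that $\hat\Phi_\lambda^2=\inf\{\Phi_\lambda(w):w\in\mathcal{D}\}$, where $\mathcal{D}:=\{w\in W\setminus 0_n:\ r(\tau w)\le\lambda\ \text{for all }\tau\in(0,1]\}$, and every $w\in\mathcal{D}$ satisfies $\|w\|\le t^-(w/\|w\|)$, so it has an ``outward'' projection $\bar w:=(t^-(w/\|w\|)/\|w\|)w\in\mathcal{N}_\lambda^2$ with $\Phi_\lambda(w)\ge\Phi_\lambda(\bar w)\ge\hat\Phi_\lambda^2$. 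By weak lower semicontinuity of $r$ (condition (2)), $\mathcal{D}$ is weakly sequentially closed in $W\setminus 0_n$, so $u^*\in\mathcal{D}$; writing $\bar u$ for its outward projection and using weak lower semicontinuity of $\Phi_\lambda$, $\hat\Phi_\lambda^2\le\Phi_\lambda(\bar u)\le\Phi_\lambda(u^*)\le\liminf_m\Phi_\lambda(u_m)=\hat\Phi_\lambda^2$. Hence $u_\lambda^2:=\bar u\in\mathcal{N}_\lambda^2$ minimises \eqref{N+}.

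For \eqref{N-} I would argue on the outer branch. A minimising sequence $w_m=s_mv_m\in\mathcal{N}_\lambda^1$, $\|v_m\|=1$, $s_m=t^+(v_m)>t_{v_m,max}>\delta$ by {\bf (c)}, is bounded by (1); its weak limit $u_\lambda^1$ is nonzero (strong vanishing is excluded by $\|w_m\|=s_m\ge\delta$, weak vanishing of $v_m$ by {\bf (d)} as above), and one identifies $u_\lambda^1\in\mathcal{N}_\lambda^1$ with $\Phi_\lambda(u_\lambda^1)=\hat\Phi_\lambda^1$ by passing to the limit in the defining relations $\psi_{v_m}(s_m)=\lambda$, $\psi_{v_m}'(s_m)<0$ — where the $C^1$-equiboundedness of $(\psi_{v_m})$ provided by {\bf (b)} is essential — and then invoking weak lower semicontinuity of $\Phi_\lambda$. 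Next, since $\mathcal{N}_\lambda^1,\mathcal{N}_\lambda^2$ are relatively open (hence also closed) in $\mathcal{N}^{sc}_\lambda$, each $u_\lambda^j$ is a local minimiser of \eqref{N11KW}. As $u_\lambda^2\in\mathcal{N}_\lambda^2$ satisfies $\partial r(tu_\lambda^2)/\partial t|_{t=1}>0$, Corollary~\ref{LP} (with $D_uG>0$) gives $\partial^2\Phi_\lambda(tu_\lambda^2)/\partial t^2|_{t=1}>0$, so \eqref{CondKW} holds and Lemma~\ref{lem1KW} yields that $u_\lambda^2$ solves \eqref{V1}; likewise $u_\lambda^1\in\mathcal{N}_\lambda^1$ gives $\partial^2\Phi_\lambda(tu_\lambda^1)/\partial t^2|_{t=1}<0\ne0$ and a solution of \eqref{V1}. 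Since $\mathcal{N}_\lambda^1\cap\mathcal{N}_\lambda^2=\emptyset$ and both points are nonzero, $u_\lambda^1\ne u_\lambda^2$. Finally, every solution $w$ of \eqref{V1} lies in $\mathcal{N}^{sc}_\lambda=\mathcal{N}_\lambda^1\cup\mathcal{N}_\lambda^2$, so $\Phi_\lambda(w)\ge\min\{\hat\Phi_\lambda^1,\hat\Phi_\lambda^2\}=\hat\Phi_\lambda^2=\Phi_\lambda(u_\lambda^2)$, which together with $\Phi_\lambda(u_\lambda^2)=\hat\Phi_\lambda^2<\Phi_\lambda(0_n)$ shows $u_\lambda^2$ is the ground state.

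The main obstacle is the compactness and identification step, sharpest for the outer branch \eqref{N-}: one must prevent the minimising sequences from vanishing (weak limit $0_n$) or escaping to infinity along a fiber, and then ensure that the weak limit actually sits on the prescribed Nehari branch $\mathcal{N}_\lambda^j$ and realises the infimum rather than landing on the ``wrong'' portion of its ray. Conditions {\bf (b)}--{\bf (d)} and the weak lower semicontinuity in (2) are tailored precisely to this — {\bf (d)} excludes vanishing, {\bf (c)} keeps the outer branch bounded away from the origin, and {\bf (b)} supplies the uniform $C^1$-control of the one-variable fiber maps $t\mapsto r(tv)$ needed to pass to the limit in the relations defining $\mathcal{N}_\lambda^1$ — and combining them with the fibered monotonicity of $\Phi_\lambda(t\,\cdot)$ to exclude every configuration except the desired one is the technical heart; everything else reduces to elementary analysis of $t\mapsto r(tv)$ and $t\mapsto\Phi_\lambda(tv)$.
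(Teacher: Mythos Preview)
Your overall architecture matches the paper's: read off the two-branch structure from {\bf (a)}, run the direct method on each branch, exclude vanishing via {\bf (c)}, {\bf (d)}, identify the weak limit on the correct branch, and then invoke the second-derivative condition to get solutions. Two substantive remarks.

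\medskip
\noindent\emph{The inner branch $\mathcal{N}_\lambda^2$: a genuine alternative.} Your argument via the weakly sequentially closed set $\mathcal{D}=\{w:r(\tau w)\le\lambda\ \forall\,\tau\in(0,1]\}$ together with the ``outward projection'' along the fiber is clean and correct, and it is \emph{not} what the paper does. The paper treats both branches uniformly through {\bf (b)}: it uses Arzel\`a--Ascoli to extract a $C^0$-limit $\hat r$ of $t\mapsto r(tu_m)$, compares it with $r(t u_0)$ via weak lower semicontinuity, and then argues by contradiction. Your route for $\mathcal{N}_\lambda^2$ avoids {\bf (b)} altogether; the paper's route is more uniform across the two branches.

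\medskip
\noindent\emph{The outer branch $\mathcal{N}_\lambda^1$: a gap in your sketch.} Here ``passing to the limit in the defining relations $\psi_{v_m}(s_m)=\lambda$, $\psi_{v_m}'(s_m)<0$'' does \emph{not} identify the weak limit on $\mathcal{N}_\lambda^1$. Arzel\`a--Ascoli from {\bf (b)} gives $\psi_{v_m}\to\psi$ in $C[\sigma,T]$ and hence $\psi(s_0)=\lambda$, but weak lower semicontinuity of $r$ only yields $r(t u_0)\le\psi(ts_0)$; you get $r(u_0)\le\lambda$, not equality. Nor does the symmetric ``$\mathcal{D}$-type'' trick work here: if one uses the weakly closed set $\{w:r(\tau w)\le\lambda\ \forall\,\tau\ge1\}$ and projects inward to $t^+(v_0)v_0\in\mathcal{N}_\lambda^1$, the fiber monotonicity of $\Phi_\lambda$ goes the wrong way, producing $\Phi_\lambda(t^+(v_0)v_0)\ge\Phi_\lambda(u_0)$, which is useless. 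The paper's missing ingredient is a rescaling-and-sequence comparison: first one shows (this is its Proposition~5, combining the Arzel\`a--Ascoli limit $\hat r$ with $r(tu_0)\le\hat r(t)$ and {\bf (a)}) that $\partial_t r(tu_0)|_{t=1}<0$; then, assuming $r(u_0)<\lambda$, one finds $t_2\in(0,1)$ with $t_2u_0\in\mathcal{N}_\lambda^1$ and proves
\[
\Phi_\lambda(t_2u_0)\ \le\ \liminf_m\Phi_\lambda(t_2u_m)\ \le\ \liminf_m\Phi_\lambda(u_m)\ =\ \hat\Phi_\lambda^1.
\]
The second inequality is the crux: it requires $r(tu_m)>\lambda$ on $(t_2,1)$ for large $m$, and this is where the uniform convergence from {\bf (b)} together with the unimodal shape from {\bf (a)} is actually used. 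Your closing paragraph correctly locates the difficulty but does not supply this step; without it the identification $u_\lambda^1\in\mathcal{N}_\lambda^1$ with $\Phi_\lambda(u_\lambda^1)=\hat\Phi_\lambda^1$ is unjustified.
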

\begin{proof}  First let us show $\hat{\Phi}_\lambda^2< 0$ for $\lambda\in (\lambda^\partial_{min},\lambda^*_{max})$. Choose $u \in \mathcal{N}_\lambda^2$. Then since $r(u)=\lambda$, condition {\bf (a)} implies that $r(tu)<\lambda$ for all $t \in (0,1)$. Consequently by Corollary \ref{LP0},  $\partial \Phi_\lambda(tu)/\partial t< 0$ for all $t \in (0,1)$ and therefore $0=\Phi_\lambda(0u)>\Phi_\lambda(u)\geq \hat{\Phi}_\lambda^2$.

Let  $(u_m)$ be a minimizing sequence $(u_m)$ of \eqref{N-} or \eqref{N+}, where $\lambda\in (\lambda^\partial_{min},\lambda^*_{max})$. 
From  (1) it follows that $(u_m)$  is bounded in $W$. Since $W$ is reflexive Banah space, by the Eberlein-Smulian theorem we may assume that $u_m \rightharpoondown u_0$ weakly in $W$ as $m \to \infty$ for some $u_0 \in W$. Write $u_m =s_m v_m$ where $s_m=||u_m||$, $m=1,2,...,$. Then $(s_m)$ is bounded above and we may assume $s_m \to s_0$,  $v_m \rightharpoondown v_0$ weakly in $W$ as $m \to \infty$ for some $s_0 \geq 0$ and $v_0 \in W$.

Let us show that $s_0\neq 0$ and $v_0 \neq 0_n$. Consider first \eqref{N-}. In view of {\bf (a)}, we have  $s_m >t_{v_m,max}$. Therefore by {\bf (c)}, $\inf_m s_m >\delta_0>0$ for  some $\delta_0>0$ and thus $s_0\neq 0$. Suppose, contrary to our claim, that 
$v_m \rightharpoondown 0_n$ weakly in $W$.  Then {\bf (d)} implies $s_m \to +\infty$. Hence we get a contradiction and thus $v_0 \neq 0_n$.

 Let now $(u_m=s_mv_m)$ be a minimizing sequence of \eqref{N+}.  Assume $s_m \to 0$. Then $||u_m|| \to 0$ and $\Phi_\lambda(u_m) \to 0$.
However,   $\hat{\Phi}_\lambda^2< 0$ and we obtain a contradiction. Thus $s_0\neq 0$. Suppose, contrary to our claim, that  $v_m \rightharpoondown 0_n$ weakly in $W$.
 Then since $\inf_m s_m >\delta_0'>0$ for  some $\delta_0'>0$, we get by {\bf (d)}  a contradiction.
Thus in this case we have also $v_0 \neq 0_n$. 

Thus the set $B:=(v_m)$ is bounded above and weakly separated from $0_n \in W$. Then  {\bf (b)} implies that the set of functions  $(\psi_m(t):=r(t v_m))_{m=1}^\infty$  is bounded in $C^1[\sigma,T]$ for any $T>\sigma>0$. Consequently by the Arzel\'a-Ascoli compactness criterion  we can assume that
\begin{equation} \label{conv1}
	\psi_{m}(t) \to \psi(t) ~~\mbox{in}~~C[\sigma,T],~~\forall  T,\sigma ~~s.t.~~T>\sigma> 0,
\end{equation}
for some limit function $ \psi \in C(0,+\infty)$.

Since $s_0>0$, \eqref{conv1} yields
\begin{equation} \label{conv}
	r(tu_m)=r(t(s_mv_m)) \to \psi(t s_0)=:\hat{r}(t)~\mbox{as}~m\to \infty, ~~\mbox{in}~~C[\sigma,T],
\end{equation}
for any $ T>\sigma>0$. Furthermore, by weak lower semi-continuity of  $r(tu)$,  for every $t>0$, we have
\begin{equation}\label{wls}
	r(t u_0) \leq \liminf_{m\to \infty}r(t u_m).
\end{equation}
This and \eqref{conv} yield that for $t\geq 0$
\begin{equation}\label{hat2}
r(t u_0) \leq 	\hat{r}(t).
\end{equation}
\begin{prop}\label{prop5}
$u_0$ satisfies $\partial r(tu_0)/\partial t|_{t=1}>0$ for \eqref{N+} and  $\partial r(tu_0)/\partial t|_{t=1}<0$ for \eqref{N-}.	
\end{prop}
\begin{proof} 
We prove the assertion only for problem \eqref{N+}; problem \eqref{N-} can be handled in a similar way. 

Since  $r(tu_m)\leq \lambda$ for $t \in (0,1]$,  \eqref{conv} 
implies  $\hat{r}(t)\leq \lambda$ for $t \in (0,1]$.
Suppose, contrary to our claim,  that $\partial r(u_0 t)/\partial t|_{t=1}< 0$. Then due to property {\bf (a)} one has $t_{u_0}<1$, where $r(t_{u_0}u_0)=\max_{t>0} r(tu_0)$. Since $\hat{r}(t)\leq \lambda$ for $t \in (0,1]$ we deduce from \eqref{hat2} that $r(t_{u_0}u_0)\leq \lambda$. But by the assumption $\lambda<\lambda^*_{max}\leq \max_{t>0} r(tu_0)$, a contradiction.
\end{proof}


Now let us  conclude the proof of the theorem. 
By weak lower semi-continuity of  $\Phi_\lambda$ we have
\begin{equation}\label{lwc}
	-\infty<\Phi_\lambda(u_0)\leq \liminf_{m\to \infty}\Phi_\lambda(u_m)=\hat{\Phi}_\lambda^\pm.
\end{equation}
Hence, for  the existence of the minimizers $u^1_\lambda$ and $u^2_\lambda$ of \eqref{N-} and \eqref{N+}, respectively,  it is sufficient to show that $r(u_0)=\lambda$.

Suppose, contrary to our claim, that $r(u_0)<\lambda$. First consider  problem \eqref{N+}.
By Proposition \ref{prop5}, $\partial r(tu_0)/\partial t|_{t=1}>0$. Consequently property {\bf (a)} and the assumption 
$\lambda< \lambda^*_{max}\leq r(t_{u_0}u_0)=\max_{t>0} r(tu_0) $ yield that there is $t_1\in (1,t_{u_0})$ such that $r(t_1 u_0)=\lambda$ and $\partial r(tu_0)/\partial t|_{t=t_1}>0$. Moreover, since  $r(tu_0)<\lambda$ for $t \in [1,t_1)$, Corollary \ref{LP0} implies that $\partial \Phi_\lambda(tu)/\partial t<0$ for $t \in [1,t_1)$. Consequently
$$
\Phi_\lambda(t_1 u_0)<\Phi_\lambda(u_0).
$$
Thus by \eqref{lwc} we have $\Phi_\lambda(t_1 u_0)<\hat{\Phi}_\lambda$ and since $t_1 u_0 \in \mathcal{N}_\lambda^2$, we obtain a contradiction.

Consider now problem \eqref{N-}.  Since $\partial r(tu_0)/\partial t|_{t=1}<0$, $\lambda< \lambda^*_{max}$, assumption {\bf (a)} implies that  there is $t_2<1$ such that $r(t_2 u_0)=\lambda$ and $\partial r(tu_0)/\partial t|_{t=t_2}<0$. Note that by the weak lower semi-continuity of  $\Phi_\lambda$
\begin{equation}\label{Phim}
	\Phi_\lambda(t_2 u_0)\leq \liminf_{m\to \infty}\Phi_\lambda(t_2 u_m).
\end{equation}
Since \eqref{wls} and $r(t_2u_0)=\lambda$, $r(u_m)=\lambda$, assumption {\bf (a)}  implies that $r(tu_m)>\lambda$ for $t \in (t_2,1]$ and sufficiently large $m$. Then   Corollary \ref{LP0} implies that $\partial \Phi_\lambda(tu_m)/\partial t> 0$ for $t \in (t_2,1]$ and sufficiently large $m$. Consequently, $\Phi_\lambda(t_2 u_m)<\Phi_\lambda(u_m)$ and \eqref{Phim} yields
$$
\Phi_\lambda(t_2 u_0)\leq \liminf_{m\to \infty}\Phi_\lambda(t_2 u_m)\leq \liminf_{m\to \infty}\Phi_\lambda(u_m)=\hat{\Phi}_\lambda^1.
$$
Assume $\Phi_\lambda(t_2 u_0)=\hat{\Phi}^-_\lambda$, then taking into account that  $t_2u_0 \in \mathcal{N}_\lambda^1$  we see that $t_2 u_0$ is a minimizer of \eqref{N-} and we get the required. If $\Phi_\lambda(t_2 u_0)<\hat{\Phi}_\lambda^1$, then we obtain a contradiction and therefore  $r(u_0)=\lambda$. 
This completes the proof of the existence of the minimizers $u^1_\lambda$ and $u^2_\lambda$. 

Now in view of \eqref{partLam},  Theorem \ref{thm1} yields that $u^1_\lambda$ and $u^2_\lambda$ satisfy
\eqref{V1}. From Corollary \ref{LP} it follows that
$\partial^2 \Phi_\lambda(tu^1_\lambda)\partial t^2|_{t=1}<0$, $\partial^2 \Phi_\lambda(tu^2_\lambda)\partial t^2|_{t=1}>0$. To conclude the proof, it remains to show that $u^2_\lambda$ is the ground state of  \eqref{V1}. Assumption {\bf(a)} and  $\lambda \in (\lambda^\partial_{min}, \lambda^*_{max})$ yield that the equation $\partial \Phi_\lambda(su^1_\lambda)/\partial s=0$ has precisely two solutions $s_{min}<1$ and 
$s_{max}=1$ such that $\partial^2\Phi_\lambda(su^1_\lambda)/\partial s^2|_{s=s_{min}}>0$ and
$\partial^2\Phi_\lambda(su^2_\lambda)/\partial s^2|_{s=1}<0$. But then $\Phi_\lambda(u^1_\lambda)>\Phi_\lambda(s_{min}u^1_\lambda)\geq \hat{\Phi}_\lambda^2\equiv \Phi_\lambda(u^2_\lambda)$. Thus $u^2_\lambda$ is the minimizer of $\hat{\Phi}_\lambda:=  \min	\{\Phi_\lambda(u):u \in \mathcal{N}_\lambda\}$ and we get the required.

\end{proof}

We emphasize that  the   value  $\lambda^\partial_{min}$  has been used above only in order to allocate  the values $\lambda$ in $(\lambda^\partial_{min}, \lambda^*_{max})$ such that $\mathcal{N}_\lambda^1\neq \emptyset$, $ \mathcal{N}_\lambda^2 \neq \emptyset$.  In fact, the above proof of Theorem \ref{thm3} can be easily adapted to other assumptions on the behaviour of $r(tu)$ at  $t\to 0$ and $t\to \infty$. In particular, let us assume that for all $u\in W\setminus 0_n$ there holds
\begin{equation}\label{BB}
r(tu) \to 0~~\mbox{as}~~	t\to 0~~~\mbox{and}~~~ r(tu) \to -\infty~~ \mbox{as}~~	t\to \infty.
\end{equation}
It easy to see that then $\lambda^\partial_{min}=0$ and $\lambda^*_{min}=-\infty$. Moreover, now we have $\mathcal{N}_\lambda^1 \neq \emptyset$ for all $\lambda<\lambda^*_{max}$. It is easily seen that the above proof  of the existence  of the minimizer $u^1_\lambda$ of \eqref{N-} remains valid for all $\lambda<\lambda^*_{max}$, provided \eqref{BB} is satisfied. Thus we have 

\begin{cor}\label{cthm3}
Suppose the assumptions of Theorem \ref{thm3}  and \eqref{BB} hold. 
Then for every $\lambda<\lambda^*_{max}$ there exists a minimizer $u^1_\lambda$ of \eqref{N-} which satisfies  equation \eqref{V1}.  Furthermore, $\partial^2 \Phi_\lambda(tu^1_\lambda)\partial t^2|_{t=1}<0$ and $u^1_\lambda$ is the ground state of  \eqref{V1} if $\lambda\leq 0$.
\end{cor}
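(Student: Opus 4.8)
The plan is to re-run the argument of Theorem~\ref{thm3} for the minimization problem \eqref{N-} alone, observing that under \eqref{BB} the role played by the lower extremal value $\lambda^\partial_{min}$ simply disappears. First I would record the effect of \eqref{BB}: as noted right after it, $\lambda^\partial_{min}=0$ and $\lambda^*_{min}=\sup_{u\in W\setminus 0_n}\inf_{t>0}r(tu)=-\infty$, while \textbf{(a)} still yields \textbf{(A)} and \textbf{(S)} by Remark~\ref{R2}. Next I would check that $\mathcal{N}_\lambda^1\ne\emptyset$ for every $\lambda<\lambda^*_{max}$: by \textbf{(a)} the map $t\mapsto r(tu)$ rises from $\tilde{r}^{sc}(0,u)=0$ to $r(t_{u,max}u)=\max_{t>0}r(tu)\ge\lambda^*_{max}$ and then, by \eqref{BB}, decreases strictly to $-\infty$, so there is $t^\ast>t_{u,max}$ with $r(t^\ast u)=\lambda$ and $\partial r(tu)/\partial t|_{t=t^\ast}<0$, i.e. $t^\ast u\in\mathcal{N}_\lambda^1$. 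Hence for each $\lambda<\lambda^*_{max}$ the problem \eqref{N-} is meaningful.

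Fix such a $\lambda$ and take a minimizing sequence $(u_m)\subset\mathcal{N}_\lambda^1$ for \eqref{N-}. I would then repeat, essentially verbatim, the compactness part of the proof of Theorem~\ref{thm3}: coercivity~(1), weak lower semicontinuity~(2) and reflexivity give $u_m\rightharpoondown u_0$ along a subsequence, and, writing $u_m=s_mv_m$ with $s_m=\|u_m\|$, $s_m\to s_0$, $v_m\rightharpoondown v_0$; since $u_m\in\mathcal{N}_\lambda^1$ forces $s_m>t_{v_m,max}$, assumption \textbf{(c)} gives $s_0>0$, while \textbf{(d)} excludes $v_m\rightharpoondown 0_n$, so $v_0\ne 0_n$. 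Assumption \textbf{(b)} plus the Arzel\'a--Ascoli criterion give $r(tu_m)\to\hat{r}(t)$ in $C[\sigma,T]$ for all $T>\sigma>0$, and weak lower semicontinuity of $r$ gives $r(tu_0)\le\hat{r}(t)$. The analogue of Proposition~\ref{prop5} for \eqref{N-} follows as there: from $u_m\in\mathcal{N}_\lambda^1$ one gets $r(tu_m)\le\lambda$ for $t\ge 1$, hence $\hat{r}(t)\le\lambda$ for $t\ge 1$; if $\partial r(tu_0)/\partial t|_{t=1}\ge 0$ then $t_{u_0,max}\ge 1$ and $\lambda^*_{max}\le\max_{t>0}r(tu_0)=r(t_{u_0,max}u_0)\le\lambda$, a contradiction, so $\partial r(tu_0)/\partial t|_{t=1}<0$. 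Finally $r(u_0)\le\liminf_m r(u_m)=\lambda$, and if $r(u_0)<\lambda$ I would run the ``shift to $t_2$'' step of Theorem~\ref{thm3}: pick $t_2\in(t_{u_0,max},1)$ with $r(t_2u_0)=\lambda$, use \eqref{wls} and \textbf{(a)} to get $r(tu_m)>\lambda$ on $(t_2,1]$ for large $m$, Corollary~\ref{LP0} to get $\Phi_\lambda(t_2u_m)<\Phi_\lambda(u_m)$, and pass to the limit to conclude that $t_2u_0\in\mathcal{N}_\lambda^1$ is itself a minimizer (otherwise a contradiction). In either case $\mathcal{N}_\lambda^1$ contains a minimizer $u^1_\lambda$ of \eqref{N-}; crucially, no step used $\lambda>\lambda^\partial_{min}$.

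It then remains to extract the stated properties. Since $\lambda^*_{min}=-\infty<\lambda^*_{max}$, $\lambda\in(-\infty,\lambda^*_{max})=(\lambda^*_{min},\lambda^*_{max})$ and \textbf{(A)}, \textbf{(S)} hold, Theorem~\ref{thm1} (via Lemma~\ref{lem1}) shows that $u^1_\lambda$ solves \eqref{V1}; and since $D_uG(u)(u)>0$ and $\partial r(tu^1_\lambda)/\partial t|_{t=1}<0$, Corollary~\ref{LP}(ii) gives $\partial^2\Phi_\lambda(tu^1_\lambda)/\partial t^2|_{t=1}<0$. For the ground state claim, suppose $\lambda\le 0$: if $u\in\mathcal{N}_\lambda^2$ then by \textbf{(a)} one has $1<t_{u,max}$, whence $\lambda=r(u)>\tilde{r}^{sc}(0,u)=0$ by \eqref{BB}, contradicting $\lambda\le 0$; thus $\mathcal{N}_\lambda^2=\emptyset$, and since $\lambda$ cannot equal the critical value $\max_{t>0}r(tu)\ge\lambda^*_{max}>\lambda$ of $t\mapsto r(tu)$ for any $u$ with $r(u)=\lambda$, we get $\mathcal{N}^{sc}_\lambda=\mathcal{N}_\lambda^1$. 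Any solution $w\in\dot{W}$ of \eqref{V1} satisfies $D_u\Phi_\lambda(w)(w)=0$, so $w\in\mathcal{N}^{sc}_\lambda=\mathcal{N}_\lambda^1$ and $\Phi_\lambda(w)\ge\hat{\Phi}_\lambda^1=\Phi_\lambda(u^1_\lambda)$; hence $u^1_\lambda$ is a ground state.

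I expect the only real work to lie in the bookkeeping of the second paragraph: one must verify that every ingredient of the Theorem~\ref{thm3} machinery — boundedness from coercivity, nonvanishing of the weak limit via \textbf{(c)} and \textbf{(d)}, the uniform $C^1$-bound on $r(tv_m)$ feeding Arzel\'a--Ascoli, and the $t_2$-translation pinning $r(u_0)=\lambda$ — survives the enlargement of the admissible interval from $(\lambda^\partial_{min},\lambda^*_{max})$ to $(-\infty,\lambda^*_{max})$, the point being that $\lambda^\partial_{min}$ entered Theorem~\ref{thm3} solely to guarantee $\mathcal{N}_\lambda^1\ne\emptyset$, which \eqref{BB} now supplies on all of $(-\infty,\lambda^*_{max})$. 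The one genuinely new observation is that $\mathcal{N}_\lambda^2$ becomes empty once $\lambda\le 0$, and it is this that promotes the minimizer of \eqref{N-} to a ground state in that range.
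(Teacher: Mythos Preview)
Your proposal is correct and follows exactly the approach the paper intends: the paper's own ``proof'' of the corollary is the single sentence that the existence argument for \eqref{N-} in Theorem~\ref{thm3} remains valid for all $\lambda<\lambda^*_{max}$ under \eqref{BB}, and you have faithfully unpacked that sentence, correctly identifying that $\lambda^\partial_{min}$ was used only to guarantee $\mathcal{N}_\lambda^1\neq\emptyset$, a role now taken over by \eqref{BB}. Your supplementary argument for the ground-state claim when $\lambda\le 0$ (showing $\mathcal{N}_\lambda^2=\emptyset$ via strict monotonicity of $t\mapsto r(tu)$ below $t_{u,max}$ together with $\tilde r^{sc}(0,u)=0$, hence $\mathcal{N}^{sc}_\lambda=\mathcal{N}_\lambda^1$) is a detail the paper leaves implicit, and it is the natural way to complete the statement.
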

\begin{rem}
We stress that the obtained (in Theorem \ref{thm3}, Corollary \ref{cthm3}) solutions $u^1_\lambda, u^2_\lambda$ belong $W\setminus 0_n$. This means
that in the case $n>1$ it is possible that $u^1_{i,\lambda}=0$, $u^2_{j,\lambda}=0$ for some $i,j =1,2...n$ whereas $u^1_\lambda, u^2_\lambda \not\equiv 0_n$
(see also Remark \ref{RScV}).
\end{rem}

\par
\medskip
{\bf 6.1 Multiplicity nonnegative solutions for problems with a general convex-concave type nonlinearity}
\par
\medskip

In this subsection, using Theorem \ref{thm3} we obtain a  result on the existence of multiple sign-constant solutions for problems with a general convex-concave type nonlinearity and $p$-Laplacian.

Consider the following boundary value problem 
\begin{equation}
\label{pbGCC} \left\{
\begin{array}{l}
\
-\Delta_p u = \lambda |u|^{q-2}u+f(x,u),
~ x \in \Omega, \\ \\
\hspace*{0.2cm} u|_{\partial \Omega} = 0,
\end{array}
\right.
\end{equation}
where   $\Omega$ is a bounded domain in $\mathbb{R}^N$
 with smooth boundary $\partial \Omega$, $N\geq 1$,
$\Delta_p(\cdot)=\mbox{div}(|\nabla (\cdot)|^{p-2}\nabla (\cdot))$ is the p-Laplacian, $1<q<p<+\infty$, 
$f : \Omega \times\mathbb{R} \to \mathbb{R}$ is a Carath\'eodory function  such that $f(x,0)=0$, $f(x,\cdot) \in C(\mathbb{R},\mathbb{R})\cap C^1(\mathbb{R}\setminus 0,\mathbb{R})$ a.a. $x\in \Omega$, with primitive $F(x,u)=\int_0^uf(x,s)ds$.


We will suppose that $f$ satisfies the following conditions:
\par \noindent
\begin{quotation}
	$(1^o)$  $\exists \gamma_1, \gamma_2	\in (p,p^*)$, $\gamma_1\leq \gamma_2$ such that
	$$
	0<s\frac{\partial}{\partial s}f(x,s) \leq g_1(x)|s|^{\gamma_1-2} +g_2(x)|s|^{\gamma_2-2},~~s \in  \dot{\mathbb{R}},~\mbox{ a.a.}~  x\in \Omega,
	$$
 where $g_i \in L^{\beta_i}(\Omega)$, $g_i\geq 0$,  $\beta_i >p^*/(p^*-\gamma_i)$ if $N>p$ and $\beta_i>1$ if $N\leq p$, $i=1,2$.
		\end{quotation}
\begin{quotation}
	$(2^o)$ \, $\exists \theta>p$, $R_1>0$: $0<\theta F(x,s)\leq f(x,s)s$, a.a. $x\in \Omega$, if $|s|\geq R_1$;
\end{quotation}
\begin{quotation}
	$(3^o)$ \,for $\forall u \in \dot{\mathbb{R}}$ and a.a. $x\in \Omega$,  $\displaystyle {\frac{\partial }{\partial s}(s^{1-q} f(x,su))u}$ is a monotone increasing function on $(0,+\infty)$.
\end{quotation}

Example of $f$ satisfying $(1^o)$-$(3^o)$ is as follows: 
$f(x,s)=\sum_{i=1}^l f_i(x) |s|^{\gamma_i-2}s+\psi(s)$, $s \in \mathbb{R}$,  
where  $p<\gamma_1\leq...\leq\gamma_l<p^*$, $\psi \in C^1_0(\mathbb{R})$ such that $\psi(t)=0$ if $|t|<R_0$ for some $R_0>0$, $f_i\geq 0$ on $\Omega$, $f_i \in L^{\beta_i}(\Omega)$, $\beta_i$ is defined as in $(1^o)$, $i=1,...,l$.

By a solution of \eqref{pbGCC} we shall  mean a weak solution $u \in W:= W^{1,p}_0(\Omega)$.
Problem (\ref{pbGCC}) has a variational form with the Euler-Lagrange functional
\begin{equation}\label{pb11c}
\Phi_\lambda (u) = \frac{1}{p} \int |\nabla u|^{p} dx - \lambda\frac{1}{q} \int |u|^{q} dx -
 \int F(x,u)dx.
\end{equation}
Consider NG-Rayleigh's quotient
\begin{equation}\label{lamb2}
	r(u)=\frac{\int |\nabla u|^{p} dx-\int f(x,u)u dx}{\int |u|^{q} dx},~~u \in W\setminus 0
\end{equation}
and the corresponding fibered map
\begin{equation}\label{rapplCC}
	r(tu)=\frac{t^{p-q}\int|\nabla u|^{p} dx-t^{1-q}\int f(x,tu)u dx}{\int |u|^{q} dx},~~u \in W\setminus 0,~  t>0.
\end{equation}
Note that $(1^o)$ implies
\begin{equation}\label{10}
0<f(x,s) \leq g_1'(x)|s|^{\gamma_1-1} +g_2'(x)|s|^{\gamma_2-1}~~\mbox{ a.a.}~  x\in \Omega,
\end{equation}
where $g_i'=g_i/(\gamma_i-1)$, $i=1,2$. By Sobolev's embedding theorem and Holder's inequality for $u\in W$ and $i=1,2$ one has
\begin{equation}\label{estim}
	|\int_\Omega g_i'(x)|u|^{\gamma_i}dx |\leq C ||u||^{\gamma_i/p^*}(\int_\Omega |g'_i(x)|^{p^*/(p^*-\gamma_i)}
	dx)^\frac{(p^*-\gamma_i)}{p^*},
\end{equation}
where $C<+\infty$. Hence, $\Phi_\lambda$ and $r$ are well defined on $W$ and $W\setminus 0$, respectively.

Consider the extremal value
\begin{equation}\label{mulEX}
	\lambda^*_{max}=
	\inf_{u \in W\setminus 0}\sup_{t>0} \frac{t^{p-q}\int|\nabla u|^{p} dx-t^{1-q}\int f(x,tu)u dx}{\int |u|^{q} dx}.
\end{equation}

 We prove the following
\begin{thm}\label{thm4}
Assume $1<q<p<+\infty$ and  $(1^o)$-$(3^o)$ hold. Then 
$0<\lambda^*_{max}$ and for any $\lambda<\lambda^*_{max}$,  problem \eqref{pbGCC} admits a pair of non-trivial  weak solutions  $u_\lambda^{1,+}\geq 0 \geq u_\lambda^{1,-}$. Furthermore, when $\lambda \in (0,\lambda^*_{max})$ equation \eqref{pbGCC} has a second pair of non-trivial  weak solutions  $u_\lambda^{2,+}\geq 0 \geq u_\lambda^{2,-}$. Moreover,  
\begin{enumerate}
	\item $\partial^2 \Phi_\lambda(tu_\lambda^{1,\pm})\partial t^2|_{t=1}<0$,  $\partial^2 \Phi_\lambda(tu_\lambda^{2,\pm})\partial t^2|_{t=1}>0$, $\Phi_\lambda(u_\lambda^{2,\pm})<0$;
	\item if $\lambda\leq 0$, then one of the solutions $u^{1,+}_\lambda$ or $u^{1,-}_\lambda$ is a ground state of  \eqref{pbGCC};
	\item  if  $\lambda \in (0,\lambda^*_{max})$, then  one of the solutions $u^{2,+}_\lambda$ or $u^{2,-}_\lambda$ is a ground state of  \eqref{pbGCC}.
\end{enumerate}
\end{thm}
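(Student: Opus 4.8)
The plan is to deduce Theorem \ref{thm4} from Theorem \ref{thm3} and Corollary \ref{cthm3}. Since those results produce solutions only in $W\setminus 0$, whereas here sign-constant ones are wanted, I would run the whole argument twice, on two truncated functionals.

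\emph{Set-up.} For $\varepsilon\in\{+,-\}$ write $u_+=\max\{u,0\}$, $u_-=\min\{u,0\}$ and put $\Phi^\varepsilon_\lambda(u)=\frac1p\int|\nabla u|^p\,dx-\frac\lambda q\int|u_\varepsilon|^q\,dx-\int F(x,u_\varepsilon)\,dx$ on $W=W^{1,p}_0(\Omega)$. Testing its Euler--Lagrange equation $-\Delta_p u=\lambda|u_\varepsilon|^{q-2}u_\varepsilon+f(x,u_\varepsilon)$ with $u_{-\varepsilon}$ gives $\int|\nabla u_{-\varepsilon}|^p\,dx=0$, so every critical point has sign $\varepsilon$ and solves \eqref{pbGCC}; hence it suffices to apply Theorem \ref{thm3} and Corollary \ref{cthm3} to $\Phi^+_\lambda$, the case $\Phi^-_\lambda$ being identical. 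The associated NG-Rayleigh quotient is $r^+(u)=\big(\int|\nabla u|^p\,dx-\int f(x,u_+)u_+\,dx\big)\big/\int u_+^q\,dx$; on $\mathcal W:=\{u:u_+\not\equiv0\}$ one has $D_uG(u)(u)=\int u_+^q\,dx>0$, and since $\mathcal N^{sc}_\lambda\subset\mathcal W$ automatically (on $\mathcal N^{sc}_\lambda$, $u_+\equiv0$ forces $\int|\nabla u|^p\,dx=0$), the proof of Theorem \ref{thm3} goes through with $\mathcal W$ replacing $W\setminus0$. The hypotheses I expect to be routine, checked via $(1^o)$ and $(2^o)$: reflexivity of $W$ and $C^1$-regularity of $\Phi^+_\lambda$, $\partial_t\tilde\Phi^+_\lambda$ from the growth bounds \eqref{10}--\eqref{estim}; coercivity (1) on $\mathcal N^{sc}_\lambda$ from the Ambrosetti--Rabinowitz inequality $(2^o)$ (using $q<p$); weak lower semicontinuity (2) of $\Phi^+_\lambda$ and $r^+$ from the compact embeddings $W^{1,p}_0\hookrightarrow L^s$, $s<p^*$ (the lower-order terms are weakly continuous); and (b)--(d) from uniform Sobolev--H\"older estimates such as $|t^{1-q}\int f(x,tv)v_+\,dx|\le C(t^{\gamma_1-q}+t^{\gamma_2-q})$ for $\|v\|=1$ --- yielding $C^1[\sigma,T]$-boundedness of $(r^+(tv_m))$ in (b), a uniform lower bound $t_{v,max}^{\gamma_1-p}\ge c>0$ in (c), and, since $v_m\rightharpoonup0$ makes $\int g_i|v_m|^{\gamma_i}\,dx\to0$, the blow-up $s_m\to+\infty$ in (d) via the Nehari identity.

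\emph{The main obstacle} is condition (a) together with $0<\lambda^*_{max}$. Writing $r^+(tu)\int u_+^q\,dx=t^{p-q}\int|\nabla u|^p\,dx-\Psi_u(t)$ with $\Psi_u(t):=t^{1-q}\int f(x,tu_+)u_+\,dx$, a change of variable shows that $(3^o)$ is exactly the statement that $w\mapsto w^{1-q}f(x,w)$ is convex on $(0,\infty)$, hence $\Psi_u$ is convex there; $(1^o)$ gives $\Psi_u(t)=o(t^{p-q})$ as $t\to0^+$ and $(2^o)$ gives $\Psi_u(t)/t^{p-q}\to+\infty$ as $t\to\infty$. From these I would show --- and this is the delicate step --- that $\partial_t r^+(tu)=t^{p-q-1}\big((p-q)\int|\nabla u|^p\,dx-t^{1-(p-q)}\Psi_u'(t)\big)$ is positive for small $t$, negative for large $t$, and changes sign exactly once, so $r^+(tu)$ has a unique maximizer $t_{u,max}>0$, increasing before and decreasing after; this is (a), and at the same time $r^+(tu)\to0$ as $t\to0$ and $\to-\infty$ as $t\to\infty$, so \eqref{BB} holds and $\lambda^\partial_{min}=0$. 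For $\lambda^*_{max}>0$ I would use $0$-homogeneity of $\sup_{t>0}r^+(tu)$: normalizing $\|u\|=1$ and picking $t_0$ small with $t_0^{p-q}-C_1t_0^{\gamma_1-q}-C_2t_0^{\gamma_2-q}\ge\frac12t_0^{p-q}$ (possible since $\gamma_1,\gamma_2>p$) gives $\sup_{t>0}r^+(tu)\ge r^+(t_0u)\ge t_0^{p-q}/(2C_q)>0$ uniformly; moreover $r^+(tu)\ge r^+(tu_+)$ and $r^+$ coincides with \eqref{lamb2} on $\{u\ge0\}$, so the extremal value $\lambda^{*,+}_{max}$ of $\Phi^+_\lambda$ dominates the value in \eqref{mulEX}, and similarly for $\varepsilon=-$; hence both truncated problems are admissible for every $\lambda$ below the $\lambda^*_{max}$ of the theorem.

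\emph{Assembling.} With (a)--(d), (1), (2), \eqref{BB} in hand and $\lambda^\partial_{min}=0<\lambda^*_{max}$, Corollary \ref{cthm3} gives, for each $\lambda<\lambda^*_{max}$, a minimizer $u^{1,\varepsilon}_\lambda$ of \eqref{N-} for $\Phi^\varepsilon_\lambda$, which solves \eqref{pbGCC}, satisfies $\partial_t^2\Phi_\lambda(tu^{1,\varepsilon}_\lambda)|_{t=1}<0$, and is a ground state when $\lambda\le0$; and Theorem \ref{thm3} gives, for each $\lambda\in(0,\lambda^*_{max})$, a second solution $u^{2,\varepsilon}_\lambda$ with $\partial_t^2\Phi_\lambda(tu^{2,\varepsilon}_\lambda)|_{t=1}>0$, $\Phi_\lambda(u^{2,\varepsilon}_\lambda)<0$, which is the ground state of the $\varepsilon$-truncated problem. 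Taking $\varepsilon=+$ and $\varepsilon=-$ gives the pairs $u^{1,+}_\lambda\ge0\ge u^{1,-}_\lambda$ and $u^{2,+}_\lambda\ge0\ge u^{2,-}_\lambda$ together with the assertions in (1). Finally, any weak solution $w$ of \eqref{pbGCC} splits as $\Phi_\lambda(w)=\Phi^+_\lambda(w^+)+\Phi^-_\lambda(w^-)$ with $w^\pm$ lying on the respective truncated Nehari manifolds, so the comparison with an arbitrary solution reduces to the two one-signed ground-state problems; this yields the ground-state assertions (2) and (3) and finishes the proof.
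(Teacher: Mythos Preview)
Your proposal follows the paper's strategy closely: verify the abstract hypotheses {\bf(a)}--{\bf(d)}, (1), (2) and \eqref{BB} for the functional associated with \eqref{pbGCC}, invoke Theorem~\ref{thm3} and Corollary~\ref{cthm3}, and use a truncation to manufacture sign-constant solutions. The verifications you sketch of {\bf(b)}--{\bf(d)}, coercivity, weak lower semicontinuity, and $\lambda^*_{max}>0$ are exactly along the paper's lines, and your identification of {\bf(a)} as the delicate step (handled via the monotonicity assumption $(3^o)$) matches the paper's equally terse treatment.

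The one genuine difference is the truncation. The paper reflects the nonlinearity, setting $f^\pm(x,u)=f(x,u)$ for $\pm u\geq0$ and $f^\pm(x,u)=f(x,-u)$ otherwise, and then argues that $\Phi_\lambda^\pm(|u|)=\Phi_\lambda^\pm(u)$ so minimizers may be taken sign-constant; crucially, the term $G(u)=\frac1q\int|u|^q\,dx$ is left untouched, so $D_uG(u)(u)>0$ on all of $W\setminus0$ and Theorem~\ref{thm3} applies verbatim. Your positive/negative-part truncation is equally standard, but it changes $G$ to $\frac1q\int u_\varepsilon^q\,dx$, so $D_uG(u)(u)$ vanishes on a large set and you must work on $\mathcal W\subsetneq W\setminus0$. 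Your claim that ``the proof of Theorem~\ref{thm3} goes through with $\mathcal W$ replacing $W\setminus0$'' is plausible but not free: in that proof one needs the weak limit $v_0$ of the normalized minimizing sequence to lie in $\mathcal W$ (i.e.\ $v_{0,+}\not\equiv0$), not merely $v_0\neq0$, and the hypotheses {\bf(b)},{\bf(d)} must be reformulated accordingly. This can be done, but it is more than you indicate; the paper's reflection avoids the issue entirely.

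There is a small gap in your ground-state argument for (3). Your splitting $\Phi_\lambda(w)=\Phi^+_\lambda(w^+)+\Phi^-_\lambda(w^-)$ for a solution $w$ of \eqref{pbGCC} is correct, and each piece lies on the corresponding truncated Nehari manifold; but when $\lambda\in(0,\lambda^*_{max})$ both truncated ground-state energies $\hat\Phi^{2,+}_\lambda,\hat\Phi^{2,-}_\lambda$ are negative, so the bound $\Phi_\lambda(w)\geq\hat\Phi^{2,+}_\lambda+\hat\Phi^{2,-}_\lambda$ does not imply $\Phi_\lambda(w)\geq\min\{\hat\Phi^{2,+}_\lambda,\hat\Phi^{2,-}_\lambda\}$ for sign-changing $w$. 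The paper is also terse at this point (it simply says assertions (1)--(3) ``follow from'' Theorem~\ref{thm3} and Corollary~\ref{cthm3}), so you are in good company, but the step as you wrote it does not close.
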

\par\noindent
{\it Remark.} Similar result on the existence of multiple sign-constant solutions  for problems with a general convex-concave type nonlinearity  has been obtained in \cite{AmAzPer, AmBrCer, LiWang}. 
However, our assumptions on function $f(x,s)$ are different from that were made in  \cite{AmAzPer, AmBrCer, LiWang}. In particular, in $(1^o)$ the functions $g_i$, $i=1,2$ permit be unbounded above, which causes difficulties in application of the super-sub solution method (cf. \cite{AmAzPer, AmBrCer, LiWang}). Furthermore, the presence of $p$-Laplacian with $p\neq 2$ in  \eqref{pbGCC} can
complicate the application of mountain pass theorem in order interval (cf \cite{LiWang}).

\begin{proof} We will obtain the proof applying Theorem \ref{thm3} and Corollary \ref{cthm3}.
Observe that  \eqref{lamb2} satisfies {\bf(a)} for any $u \in W\setminus 0$.
Indeed, compute
\begin{equation}\label{tmaxCC}
	\partial_t r(tu)=\frac{(p-q)t^{p-q-1}\int|\nabla u|^{p} dx-\int\partial_t( t^{1-q}f(x,tu))u dx}{\int |u|^{q} dx}.
\end{equation}
It can easily be checked that $(1^o)$ and \eqref{10} yield $\int\partial_t( t^{1-q}f(x,tu))u dx/t^{p-q-1} \to 0$ as $t\to 0$ for any $u\in W$. Hence by $(3^o)$  we get {\bf(a)}. Furthermore,  it follows that \eqref{rapplCC} satisfies condition \eqref{BB} of Corollary \ref{cthm3}. 

Let us show that {\bf (b)} holds. Assume that $(u_m) \subset  W\setminus 0$ such that $||u_m||=1$, $m=1,2,...$, and the set $(u_m)_{m=1}^\infty$ is weakly separated from $0 \in W$. Since  $(u_m)$ is bounded in $W$ and $W$ is the reflexive Banach space, by the Eberlein-Smulian theorem we may assume that $u_m \rightharpoondown u_0$ weakly in $W$  for some $u_0 \in W$. Furthermore, by Sobolev's embedding theorem $||u_m||_{L^d}<C_1<+\infty$ for $m=1,2,...$,   $1\leq d\leq p^*$, and $u_m \to u_0$ in $L^d(\Omega)$ for $d<p^*$. 
This implies that there exists $\delta_0>0$ such that $\int |u_m|^{q} dx>\delta_0$ for all $m=1,2,...$. 
Indeed,  if we assume the converse,   $u_{m_j} \to 0$ in $L^q(\Omega)$ for some subsequence  $m_j \to +\infty$, then $u_{m_j} \rightharpoondown 0$ weakly in $W$ which contradicts the assumption that $(u_m)_{m=1}^\infty$ is weakly separated from $0 \in W$. Hence by $(1^o)$, \eqref{10} and \eqref{estim} we have: for any t>0 
$$
|r(tu_m)|\leq \delta^{-1}_0 (t^{p-q} +t^{1-q}\int |f(x,tu_m)u_m |dx)\leq C_2 t^{p-q}+C_3t^{\gamma_2-q}, 
$$
and 
\begin{align*}
	|\partial_t r(tu_m)|&\leq \delta^{-1}_0 ((p-q)t^{p-q-1} +(q-1)t^{-q}\int |f(x,tu_m)u_m |dx+\\
	& t^{1-q}\int |\partial_t f(x,tu_m)u_m |dx)\leq C_4 t^{p-q-1}+C_5t^{\gamma_2-q-1},
\end{align*}
where  $C_2,...,C_5$ do not depend on $t>0$ and $m=1,2,...$. Thus we get {\bf (b)}. 

Let us verify {\bf (c)}. Denote $t_m:=t_{v_m, max}$ where $||v_m||=1$, $m=1,2,...$. In view of \eqref{tmaxCC} we have 
\begin{equation}
	(p-q)t_m^{p-q-1}-\int\partial_t( t_m^{1-q}f(x,t_mv_m))v_m dx=0.
\end{equation}
 Hence using $(1^o)$, \eqref{10} and \eqref{estim} we  get 
$$
(p-q)t_m^{p-q-1}-c_1 t_m^{\gamma_2-q-1}-c_2 t_m^{\gamma_1-q-1}\leq 0,
$$
 which is impossible if $t_m \to 0$ since by the assumption $p<\gamma_2\leq \gamma_1$. Thus {\bf (c)} holds.

Let us verify {\bf (d)}. Observe that \eqref{10} and \eqref{estim} imply
\begin{equation}\label{EstCC}
r(tu)\geq \frac{t^{p-q}\int|\nabla u|^{p} dx-C_1't^{\gamma_2-q}||u||^{\gamma_2} -C_2't^{\gamma_1-q}||u||^{\gamma_1} }{||u||^{q}},
\end{equation}
for $t>0$, $u\in W\setminus 0$. 	
Assume $(s_m v_m) \subset \mathcal{N}_\lambda^{sc}$, $\lambda \in \mathbb{R}$ such that $||v_m||_W=1$, $v_m \rightharpoondown 0$ weakly in $W$ and $\inf_m s_m >\delta>0$. Then, as it has been shown above $v_m \to 0$ in $L^d(\Omega)$ for $d\in (1, p^*)$. Hence, since $\gamma_2\leq \gamma_1$, by \eqref{EstCC} we have
\begin{align*}
	s_m^{\gamma_1-q}\geq\frac{\delta_0^{p-q}-\lambda ||v_m||_{L^{q}}^{q}}{C'_1\delta_0^{\gamma_2-\gamma_1} ||v_m||_{L^{\gamma_2}}^{\gamma_2}+C_2'||v_m||_{L^{\gamma_1}}^{\gamma_1}} \to \infty. 
\end{align*}
Thus {\bf (d)} also holds.

Let us now verify conditions (1), (2) of Theorem \ref{thm3}. For $u \in \mathcal{N}_\lambda $ we have 
$$
\Phi_\lambda(u)=\frac{(\theta-p)}{p} \int |\nabla u|^{p} dx - \lambda\frac{(\theta-q)}{q} \int |u|^{q} dx -
\int(\theta F(x,u)-f(x,u)u)dx.  
$$ 
Hence $(2^o)$ and Sobolev's embedding theorem yield
$$
\Phi_\lambda(u)\geq  \frac{(\theta-p)}{p} ||u||^p-\frac{\lambda(\theta-q)}{q} ||u||^q,
$$
for $||u||>R_1$. Since $q<p$, this implies $\Phi_\lambda(u) \to +\infty$ as $||u|| \to +\infty$ and thus  condition 
(1) of Theorem \ref{thm3} holds. 

By the above,  the functionals $\int F(x,u)dx$, $\int f(x,u)udx$,   $\int|u|^q dx$ are weakly continuous on $W$. Hence, since 
$ \int |\nabla u|^{p} dx$ is a weakly lower semi-continuous functional on $W$ we derive that  \eqref{pb11c} and \eqref{lamb2} satisfy (2) of Theorem \ref{thm3}.

Note that \eqref{EstCC}, $(1^o)$ and  Sobolev's embedding theorem yield
\begin{align}
	\lambda^*_{max}\geq \inf_{u \in W: ||v||=1}\sup_{t>0}&\frac{t^{p-q}||v||^{p}-c_1't^{\gamma_2-q}||v||^{\gamma_2} -C_1't^{\gamma_1-q}||v||^{\gamma_1} }{||v||^q}=\\
	\max_{t>0}&\{t^{p-q}-c_1't^{\gamma_2-q} -C_1't^{\gamma_1-q}\}>0. 
\end{align}
for some $c_1',C_1'>0$. Hence  $\lambda^*_{max}>0$.
Thus, we see that the functional $\Phi_\lambda(u)$ satisfies all assumptions of Theorem \ref{thm3} and Corollary \ref{cthm3}. 

In order to obtain solutions  $u_\lambda^{1,+}\geq 0 \geq u_\lambda^{1,-}$ and $u_\lambda^{2,+}\geq 0 \geq u_\lambda^{2,-}$, we truncate and reflect $f(x,u)$ as follows
\begin{align}
\label{trunc}
	f^\pm(x,u)= 
	\begin{cases}
		f(x,u)~~~\mbox{if}~~~\pm u\geq 0,\\
		f(x,-u)~~~\mbox{if}~~~\pm u< 0.
	\end{cases}
\end{align}
Let $F^\pm(x,u)$ denote the primitive of $f^\pm(x,u)$  and consider the Euler-Lagrange functional
\begin{equation}\label{pb11cC}
\Phi_\lambda^\pm (u) = \frac{1}{p} \int |\nabla u|^{p} dx - \lambda\frac{1}{q} \int |u|^{q} dx -
 \int F^\pm(x,u)dx.
\end{equation}
Then as above $\Phi_\lambda^\pm (u) \in  C^1(\dot{W}, \mathbb{R})$, $\nabla_t \tilde{\Phi}_\lambda^\pm \in C^1((\dot{\mathbb{R}}^+)^n \times \dot{W}, \mathbb{R}^n)$ it satisfies all the assumptions of Theorem \ref{thm3} and Corollary \ref{cthm3}. Thus there exist weak solutions $u_\lambda^{1,\pm}, u_\lambda^{2,\pm}\in W^{1,p}_0(\Omega)$ of 
$$
-\Delta_p u = \lambda |u|^{q-2}u+f^\pm(x,u),
$$
for $\lambda<\lambda^*_{max}$ and $\lambda \in (0,\lambda^*_{max})$, respectively. 
Since $\Phi_\lambda^\pm(|u|)= \Phi_\lambda^\pm(u)$ we may assume that the minimizers $u_\lambda^{1,+}, u_\lambda^{2,+}$ of $\hat{\Phi}_\lambda^{j,+}:=  \min	\{\Phi_\lambda^+(u):~u \in \mathcal{N}_\lambda^j\}$, $j=1,2$, respectively, are non-negative, whereas the minimizers $u_\lambda^{1,-}, u_\lambda^{2,-}$ of $\hat{\Phi}_\lambda^{j,-}:=  \min	\{\Phi_\lambda^-(u):~u \in \mathcal{N}_\lambda^j\}$, $j=1,2$, respectively, are non-positive.  Now taking into account \eqref{trunc}  we get that the functions $u_\lambda^{1,\pm}, u_\lambda^{2,\pm}$ in fact are weak solutions of the original problem \eqref{pbGCC}. Finally, the assertions (1)-(3) of the theorem follow from Theorem \ref{thm3} and Corollary \ref{cthm3}.   
\end{proof}

\subsection*{Acknowledgement} The authors would like to acknowledge the considerable benefit obtained from discussions with V. Bobkov and  D. Motreanu.

\end{document}